\def\C{\mathbb{C}}
\def\N{\mathbb{N}}
\def\R{\mathbb{R}}
\def\RR{\mathbb R^d}
\def\D{{\mathcal D}}
\def\F{\mathcal F}
\def\H{\mathcal H}
\def\K{\mathcal K}
\def\S{\mathcal S}
\def\Y{\mathcal I}
\def\l{L^2(\R^d)}
\def\i{L^\infty(\R^d)}
\def\c{C^\infty_0 (\R^d)}
\def\la{\langle}
\def\ra{\rangle}
\def\qvp{q_{{}_{V_+}}}
\def\qvm{q_{{}_{V_-}}}
\def\1{\mathfrak{1}}
\def\0{\mathfrak{0}}
\def\<{\langle}
\def\>{\rangle}
\def\dbar{\;\;\bar{}\!\!\!d}
\def\Op{\mathfrak{Op}^A}
\newtheorem{lemma}{Lemma}[section]
\newtheorem{corollary}[lemma]{Corollary}
\newtheorem{theorem}[lemma]{Theorem}
\newtheorem{proposition}[lemma]{Proposition}
\newtheorem{definition}[lemma]{Definition}
\newtheoremstyle{remark_rm}{}{}{\upshape}%
{}{\bf}{.}{ }{}
\theoremstyle{remark_rm}
\newtheorem{remark}[lemma]{Remark}
\numberwithin{equation}{section}
\begin{document}

\title{Unicity of the Integrated Density of States for Relativistic Schr\"odinger Operators with
Regular Magnetic Fields and  Singular Electric Potentials}

\date{\today}

\author{Viorel Iftimie\footnote{Institute
of Mathematics Simion Stoilow of the Romanian Academy, P.O.  Box 1-764,
Bucharest, Romania.\hspace{5cm} Email: Viorel.Iftimie@imar.ro} , Marius M\u
antoiu\footnote{Universidad de Chile, Las Palmeras 3425, Casilla 653, Santiago
Chile.} \footnote{Institute
of Mathematics Simion Stoilow of the Romanian Academy, P.O.  Box 1-764,
Bucharest, Romania.\hspace{5cm} Email: Marius.Mantoiu@imar.ro} \ and Radu Purice\footnote{Institute of
Mathematics Simion Stoilow of the Romanian Academy, P.O.  Box 1-764, Bucharest,
Romania.\hspace{5cm} Email: Radu.Purice@imar.ro} \footnote{Laboratoire Europ\'{e}en Associ\'{e}
CNRS {\it Math-Mode}, Franco-Roumain}}

\maketitle \vspace{-1cm}

\begin{abstract}
We show coincidence of the two definitions of the integrated density of states (IDS) for a class of relativistic
Schr\"odinger operators with magnetic fields and scalar potentials introduced in \cite{IMP1,IMP2},
the first one relying on the eigenvalue
counting function of operators induced on open bounded sets with Dirichlet boundary conditions, the other
one involving the spectral projections of the operator defined on the entire space. In this way
one generalizes the results of \cite{DIM,I} for non-relativistic operators.
The proofs needs the magnetic pseudodifferential calculus
developed in \cite{IMP1}, as well as a Feynman-Kac-It\^ o formula for L\'evy processes \cite{IT2,IMP2}.
In addition, in case when both the
magnetic field and the scalar potential are periodic, one also proves the existence of the IDS.
\end{abstract}

\section{Introduction}

We specify first the class of operators we consider. For $d\ge 2$ we set
\begin{equation*}
BC^\infty(\R^d):=\{f\in C^{\infty}(\R^d)\mid\,\partial^{\alpha}f\in L^{\infty}(\R^d),\ \forall\,\alpha\in\N^d\},
\end{equation*}
and
\begin{equation*}
C^\infty_{{\rm pol}}(\R^d):=\{f\in C^{\infty}(\R^d)\mid\,\partial^{\alpha} f {\rm \ is\ polynomially\ bounded},\
\forall\,\alpha\in\N^d\}.
\end{equation*}
The magnetic field $B=\frac{1}{2}\sum^d_{j,k=1}B_{jk}dx_j\wedge dx_k$ satisfies:
\begin{description}
 \item[Hypothesis (i):] $dB=0,\ \ \ B_{jk}=-B_{kj}\in BC^{\infty}(\R^d).$
\end{description}

Using the transversal gauge one constructs a vector potential $A=\sum^d_{j=1}A_jdx_j$, with $A_j\in
C^\infty_{{\rm pol}}(\R^d)$, such
that $dA=B$. The circulation of $A$ through the segment $[x,y],\ x,y\in\R^d$, can be written as
\begin{equation}\label{writ}
\int_{[x,y]}A=-<x-y,\,\Gamma^A(x,y)>,\ \ \ \ \Gamma^A(x,y):=\int^1_0 dsA((1-s)x+sy).
\end{equation}
In some papers \cite{MP1, KO} one proposes the following quantization of a classical observable
$a:T^*\R^d\to\R$:
\begin{equation}\label{of}
\left[\Op (a)u\right](x):=\underset{\R^d}{\int}\underset{\R^d}{\int}dy \dbar\, \xi\,e^{i<x-y,\xi+\Gamma^A(x,y)>}\,a
\left(\frac{x+y}{2},\xi\right)u(y),
\end{equation}
where $u\in\S(\R^d),\,\dbar\,\xi:=(2\pi)^{-d}d\xi$ and the oscillatory integral makes sense if, for example,
$a\in S^m(\R^{d})$.

A symbolic calculus for the operators defined by (\ref{of}), essential for the present work, has been
developed in \cite{IMP1}.
 The quantization (\ref{of}) has the important physical property of being gauge covariant:
 if $\varphi\in C^{\infty}_{\rm pol}(\R^d)$, then
$A$ and $A'=A+d\varphi$ define the same magnetic field $B$ and
$$\mathfrak{Op}^{A'}(a)=e^{i\varphi}\Op (a)e^{-i\varphi}.$$
There exists another approach for quantization in the presence of a magnetic field
\cite{GMS, Ic1, Ic2, IT1, IT2, NU, Pa}.
One defines $\mathfrak{Op}_A(a)$ by the Weyl quantization of the symbol
$$T^*\R^d\ni (x,\xi)\mapsto a\left(x,\xi-A(x)\right)\in\R,$$
but in this way gauge covariance is lost, as shown in \cite{IMP1} for $a(\xi)=<\xi>:=(1+|\xi|^2)^{1/2}$.
One notices, however, that both
quantizations lead to the same magnetic non-relativistic Schr\"odinger operator.

We are concerned in the present paper with the case
\begin{equation}\label{case}
a(\xi)=<\xi>-1
\end{equation}
for which the two quantizations do not coincide.

As shown in \cite{IMP1,IMP2}, the operator $\Op (a)$ in $L^2(\R^d)$ is essentially self-adjoint on $\S(\R^d)$.
One denotes by $H_A$ its closure; then $H_A\ge 0$ and its domain is the magnetic Sobolev space of order $1$:
$$\H^1_A:=\{u\in L^2(\R^d)\mid (D_j-A_j)u\in L^2(\R^d),\ 1\le j\le d\}.$$
We call $H_A$ {\it the relativistic Schr\"odinger operator with magnetic field}. One should remark that another
candidate exists for this concepts, the operator $[(D-A)^2+1]^{1/2}-1$ (cf. \cite{FLS} for instance),
but this one cannot be deduced from a quantization which systematically applies to a whole class of symbols.

For the scalar potential $V$, let us first consider the following condition.
\begin{description}
 \item[Hypothesis (ii):] $V:\R^d\to\R,\ V=V_+ -V_-,\ V_{\pm}\ge 0,\ V_{\pm}\in L^1_{\rm loc}(\R^d)$, and the operator of multiplication by $V_-$ is form-bounded with respect to $H_0$, with relative bound strictly
less than $1$.
 \end{description}

In other words, there exist $\alpha\in [0,1)$ and $\beta\ge 0$ such that
\begin{equation}\label{mult}
\underset{\R^d}{\int}V_{-}|u|^2dx\le\alpha\|H^{1/2}_0 u\|^2+\beta \|u\|^2,\ \ \ \ u\in D(H^{1/2}_0)=\H^{1/2}(\R^d),
\end{equation}
where $\|\cdot\|$ is the norm of $L^2(\R^d)$ and $\H^s(\R^d)$ is the usual Sobolev space of order $s\in\R$.

We are going to show in Section 4 that under the assumptions (i) and (ii), the form sum
$$H\equiv H(A;V):=H_A\overset{\cdot}{+} V$$
is well-defined.
The operator $H$ will be self-adjoint and lower semi-bounded in $L^2(\R^d)$. In particular, $H_A=H(A;0)$.

To use the Feynman-Kac-It\^o formula from Section 4 we will need a stronger hypothesis, involving Kato's class $\K_d$
associated to the operator $H_0$, defined as follows: The semigroup generated by $H_0$ is given by convolution
with a function $p_t$ (defined in Section 3); a function $W\in L^1_{\rm loc}(\R^d),\ W\ge 0$, belongs to
$\K_d$ if
\begin{equation}\label{cato}
\underset{t\searrow 0}{\lim}\,\underset{x\in\R^d}{\sup}\int^t_0\left[\int_{\R^d}p_s (x-y)W(y)dy\right]ds=0.
\end{equation}
In particular, if $W\in L^{\infty}(\R^d),\ W\ge 0$, then $W\in\K_d$. In \cite{vC, CMS, DvC} one shows that
$W\in\K_d$ verifies (\ref{mult}) for any $\alpha >0$.

For our main results we shall need a stronger assumption on $V$.
\begin{description}
 \item[Hypothesis (ii$^\prime$):] $V:\R^d\to\R,\ V=V_+-V_-,\ \ V_{\pm}\ge 0,\ \ V_{\pm}\in L^2_{\rm loc}(\R^d)\ \ {\rm and}\ V_-\in\K_d$.
 \end{description}
To define the integrated density of states (IDS) we need a family $\F$ of bounded open subsets of $\R^d$, satisfying:
\begin{description}
 \item[Hypothesis (iii):] For any $m\in\N^*$, there exists $\Omega\in\F$ such that the ball $B(0;m)$ centered in the origin, of radius $m$, is
contained in $\Omega$.
 \end{description}
\begin{description}
 \item[Hypothesis (iv):] For any $\epsilon >0$, there exists $m_0\in\N^*$ such that if $\Omega\in\F$ and $B(0,m_0)\subset\Omega$, we have
$$
|\{x\in\R^d\mid {\rm dist}(x,\partial\Omega)<1\}|<\epsilon\,|\Omega|,
$$
where we set $|\Omega|$ for the Lebesgue measure of $\Omega$.
 \end{description}

Let us mention some basic references concerning IDS \cite{CL,CFKS,FP,HS,Sh} and \cite{DIM} that is closer related to our work. There are two definitions of IDS. The first one \cite{CL, CFKS} uses the operator $H_{\Omega}$
induced by $H$ on
$\Omega\in\F$, with Dirichlet boundary conditions (it is defined in Section 6, where we prove that
$H_\Omega$ has compact resolvent on $L^2(\Omega)$). IDS is the function
\begin{equation}\label{ids}
\rho :\R\to\R_+,\ \ \ \rho (\lambda):=\underset{\Omega\in\F}{\underset{\Omega\to\R^d}{\lim}}\frac{N_\Omega
(\lambda)}{|\Omega|},
\end{equation}
where $N_\Omega (\lambda)$ is the number of eigenvalues of $H_\Omega$ smaller than $\lambda$.

The second definition \cite{CFKS,HS} uses the fact (proved in Section 5) that the operator $\boldsymbol{1}_{\Omega}
E_\lambda (H) \boldsymbol{1}_\Omega$ belongs to $\mathcal I_1$, i.e. is trace-class.
Here $\boldsymbol{1}_\Omega$ is the operator of multiplication
by the characteristic function of $\Omega$, and $E_\lambda (H)$is the spectral projection of $H$ corresponding to
the interval $(-\infty,\lambda],\ \lambda\in\R$. Then IDS is also defined by
\begin{equation}\label{defg}
\rho (\lambda):=\underset{\Omega\in\F}{\underset{\Omega\to\R^d}{\lim}}\frac{{\rm tr}[\boldsymbol{1}_\Omega E_\lambda
(H)\boldsymbol{1}_\Omega]}{|\Omega|}.
\end{equation}
The existence of the limits (\ref{ids}) and (\ref{defg}) and their equality are both non-trivial problems.

In order to solve them one uses the notion of density of states for $H$, for which we also have
two different definitions. We are going to see in Sections 5 and 6 that for any $f\in C_0(\R)$
(continuous function with compact support on $\R$)
the operators $f(H_\Omega)$ and $\boldsymbol{1}_\Omega f(H)\boldsymbol{1}_\Omega$ belong to $\mathcal I_1$. By the Riesz-Markov
Theorem for any $\Omega\in\mathcal F$ there exist Borel measures $\mu^D_\Omega$ and $\mu_\Omega$  on $\R$, such that
\begin{equation*}
|\Omega|^{-1}{\rm tr}f(H_\Omega)=\int_\R fd\mu^D_\Omega,\ \ \ \ \ |\Omega|^{-1}{\rm tr}\left[\boldsymbol{1}_\Omega
f(H)\boldsymbol{1}_\Omega\right]=\int_\R fd\mu_\Omega.
\end{equation*}
One notices that the two expressions in (\ref{ids}) and (\ref{defg}) are exactly the distribution functions of these
two measures:
\begin{equation*}
\mu^D_\Omega((-\infty,\lambda])=|\Omega|^{-1}N_\Omega(\lambda),\ \ \ \ \ \mu_\Omega((-\infty,\lambda])=
|\Omega|^{-1}{\rm tr}\left[\boldsymbol{1}_\Omega E_\lambda(H)\boldsymbol{1}_\Omega\right].
\end{equation*}
If Borel measures $\mu^D,\mu$ on $\R$ exists such that
\begin{equation*}
\underset{\mathcal F\ni\Omega\rightarrow\R^d}{\lim}\,\mu^D_\Omega=\mu^D,\ \ \ \ \
\underset{\mathcal F\ni\Omega\rightarrow\R^d}{\lim}\,\mu_\Omega=\mu,
\end{equation*}
meaning that for any $f\in C_0(\R)$ and any $\epsilon>0$ there exists $m_0\in\mathbb N^*$ such that if
$B(0;m_0)\subset\Omega$, then
\begin{equation*}
\left|\int_\R fd\mu^D_\Omega-\int_\R fd\mu^D\right|<\epsilon,\ \ \ \ \ \left|\int_\R fd\mu_\Omega-\int_\R
fd\mu\right|<\epsilon,
\end{equation*}
each of them is called {\it the density of states of} $H$. The main result of this article is the
equivalence of these definitions:

\begin{theorem}\label{main}
Under assumptions (i), (ii'), (iii) and (iv), the density of states
$\mu^D$ exists if and only if the density of states $\mu$ exists.
In addition, if one of them exists, then $\mu^D=\mu$.
\end{theorem}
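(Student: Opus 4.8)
\medskip
\noindent\textbf{Proof proposal.}
The plan is to reduce the statement to a single heat-trace estimate and to prove that estimate with the Feynman--Kac--It\^o formula. First I would pass to Laplace transforms: since Dirichlet restriction only raises the form, $H_\Omega\ge\inf\mathrm{spec}(H)=:-c_0$ uniformly in $\Omega\in\F$, so all the measures $\mu^D_\Omega,\mu_\Omega$ are carried by the fixed half-line $[-c_0,\infty)$. For $t>0$ put $L^D_\Omega(t):=|\Omega|^{-1}\mathrm{tr}\,e^{-tH_\Omega}$, $L_\Omega(t):=|\Omega|^{-1}\mathrm{tr}[\boldsymbol{1}_\Omega e^{-tH}\boldsymbol{1}_\Omega]$; by Sections~4--6 (Feynman--Kac--It\^o and the diamagnetic inequality, among other things) these are finite, and since $e^{-tH}$, $e^{-tH_\Omega}$ have jointly continuous kernels $K_t$, $K^\Omega_t$, the diagonal bound coming from $|e^{i\Phi^A_t}|=1$, $V_+\ge0$ and $V_-\in\K_d$ (Khas'minskii) gives $\mathrm{tr}\,e^{-tH_\Omega}=\int_\Omega K^\Omega_t(x,x)\,dx\le C_t|\Omega|$ and $\mathrm{tr}[\boldsymbol{1}_\Omega e^{-tH}\boldsymbol{1}_\Omega]=\int_\Omega K_t(x,x)\,dx\le C_t|\Omega|$, uniformly in $\Omega$. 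Because $\{\mu^D_\Omega\}$ and $\{\mu_\Omega\}$ live on a fixed half-line and have uniformly controlled exponential tails, a standard Laplace-continuity argument shows that $\mu^D$ exists iff $L^D_\Omega(t)$ converges for every $t>0$ (its limit being the Laplace transform of $\mu^D$), and the same for $\mu$; so Theorem~\ref{main} follows once one proves, for each fixed $t>0$,
\[
|\Omega|^{-1}\Bigl|\mathrm{tr}\,e^{-tH_\Omega}-\mathrm{tr}\bigl[\boldsymbol{1}_\Omega e^{-tH}\boldsymbol{1}_\Omega\bigr]\Bigr|\ \xrightarrow[\ \F\ni\Omega\to\RR\ ]{}\ 0 .
\]

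To prove this I would compare the two kernels with the Feynman--Kac--It\^o formula (Section~4, together with the killing procedure for $H_\Omega$ of Section~6): writing $\mathbb{E}^{x,x}_t$ for the bridge measure of the L\'evy process attached to $H_0$, one has $K_t(x,x)-K^\Omega_t(x,x)=\mathbb{E}^{x,x}_t[\boldsymbol{1}\{\exists\,s\le t:X_s\notin\Omega\}\,e^{i\Phi^A_t}e^{-\int_0^t V(X_s)ds}]$. Integrating over $x\in\Omega$, estimating $|e^{i\Phi^A_t}|=1$ and $e^{-\int V}\le e^{\int V_-}$, and observing that a loop based at $x$ can leave $\Omega$ only if $\sup_{s\le t}|X_s-x|\ge\mathrm{dist}(x,\partial\Omega)$, the left-hand side of the first display is bounded by
\[
|\Omega|^{-1}\int_\Omega\mathbb{E}^{x,x}_t\Bigl[\boldsymbol{1}\bigl\{\sup\nolimits_{s\le t}|X_s-x|\ge\mathrm{dist}(x,\partial\Omega)\bigr\}\,e^{\int_0^t V_-(X_s)\,ds}\Bigr]dx .
\]
By Cauchy--Schwarz in $\mathbb{E}^{x,x}_t$ and the Khas'minskii bound for $2V_-\in\K_d$, the integrand is at most $\Psi_t\bigl(\mathrm{dist}(x,\partial\Omega)\bigr)$ with $\Psi_t$ bounded, non-increasing and $\Psi_t(r)\to0$; crucially, because the symbol is $\langle\xi\rangle-1$ (and not $\langle\xi\rangle$), the relativistic heat kernel decays exponentially, so $\Psi_t(r)\lesssim e^{-cr}$ and $\sum_{k\ge0}\Psi_t(k)<\infty$.

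The remaining step is geometric. With $a_k(\Omega):=|\{x\in\Omega:k\le\mathrm{dist}(x,\partial\Omega)<k+1\}|$ one has $\sum_{k\ge0}a_k(\Omega)=|\Omega|$ and the bound above is $\le|\Omega|^{-1}\sum_{k\ge0}\Psi_t(k)\,a_k(\Omega)$. Hypothesis~(iv), although phrased for unit collars, also holds for collars of any fixed width $L$ (otherwise a proportionally large family of ``fingers'' of width $\lesssim L$ would already force the unit collar to occupy a fixed fraction of $|\Omega|$), hence $a_k(\Omega)=o(|\Omega|)$ as $\F\ni\Omega\to\RR$ for every fixed $k$. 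Since $\Psi_t(k)\,a_k(\Omega)/|\Omega|\le\Psi_t(k)$ with $\sum_k\Psi_t(k)<\infty$, dominated convergence yields $|\Omega|^{-1}\sum_k\Psi_t(k)\,a_k(\Omega)\to0$, proving the estimate and hence the theorem.

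I expect the main obstacle to be precisely this passage from the pointwise kernel comparison to a bound that is genuinely $o(|\Omega|)$ and not merely $O(|\Omega|)$: one needs \emph{both} the exponential spatial decay of the relativistic heat kernel --- a feature special to the symbol $\langle\xi\rangle-1$ --- to kill excursions starting deep inside $\Omega$, \emph{and} Hypothesis~(iv) used at all scales to control the boundary collar, while the magnetic field is absorbed only through $|e^{i\Phi^A_t}|=1$ and the local singularities of $V$ only through the Kato-class (Khas'minskii) estimate. One must also take care that the killed Feynman--Kac--It\^o representation is genuinely available for $H_\Omega$ and that every heat operator above is trace class with trace norm $O(|\Omega|)$, which is where $V_\pm\in L^2_{\rm loc}$ and $V_-\in\K_d$ really enter.
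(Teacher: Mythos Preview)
Your route is genuinely different from the paper's. The paper never touches heat kernels or bridges: it works entirely with resolvent powers. The key technical step is Proposition~7.2, a trace-norm estimate
\[
\|\boldsymbol{1}_\Omega (H+\lambda)^{-m}\boldsymbol{1}_\Omega-(H_\Omega+\lambda)^{-m}\|_{\mathcal I_1}\le C\,|\Omega|^{1/2}\,|\widetilde\Omega|^{1/2},
\]
obtained by telescoping the difference, inserting a cutoff $\varphi\in BC^\infty$ with $\varphi=1$ on $\Omega^c$ and $\varphi=0$ on $\Omega\setminus\widetilde\Omega$, and expanding $[H,\varphi]$ with the magnetic pseudodifferential commutator formula (Proposition~4.5, built on Proposition~2.14). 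One then passes from $(H+\lambda)^{-m}$ to arbitrary $f\in C_0(\R)$ by Stone--Weierstrass approximation of $(a+t)^{m}f(t)$ by polynomials in $(a+t)^{-1}$. Two points of contrast are worth noting. First, the paper's localisation is \emph{operator-theoretic}: the commutator $[H,\varphi]$ is supported in a fixed unit collar, so only hypothesis~(iv) in its literal form is used. Second, the paper never needs a probabilistic description of $H_\Omega$; the monotone-form definition $H_\Omega=\lim_n(H+n\boldsymbol{1}_{\Omega^c})$ and Simon's convergence theorem suffice.

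Your approach can be made to work, but two of your steps need more than you indicate. The first is the passage from (iv) to the width-$L$ version. Your ``fingers'' heuristic is not a proof; however the statement is true and has a clean proof: pick a maximal $1$-separated net $\{y_j\}\subset\partial\Omega$; the balls $B(y_j,1/2)$ are disjoint and lie in $\widetilde\Omega$, while the balls $B(y_j,L+1)$ cover $\{x:{\rm dist}(x,\partial\Omega)<L\}$, so $|\{x:{\rm dist}(x,\partial\Omega)<L\}|\le C_d(L+1)^d|\widetilde\Omega|$. You should state and prove this. The second, and heavier, point is the killed Feynman--Kac--It\^o formula for $H_\Omega$ in the presence of the magnetic phase $S(t,X)$. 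The paper does not prove such a formula; you would have to derive it by applying the FKI of Proposition~4.2 to $H_n=H+n\boldsymbol{1}_{\Omega^c}$ and passing to the limit (dominated convergence in the expectation, strong resolvent convergence $H_n\to H_\Omega$), and then justify the diagonal trace formula ${\rm tr}\,e^{-tH_\Omega}=\int_\Omega K_t^\Omega(x,x)\,dx$, which in turn requires continuity of the magnetic kernel on the diagonal and a rigorous L\'evy-bridge disintegration. None of this is in the paper's toolbox, so you are effectively importing a second theory. (Incidentally, the exponential spatial decay of $p_t$ comes from the mass in $\langle\xi\rangle=(1+|\xi|^2)^{1/2}$, not from the $-1$ shift, which only contributes a factor $e^t$.)

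What each approach buys: the paper's resolvent/commutator method stays inside the pseudodifferential calculus already set up in Sections~2--6, uses (iv) exactly as stated, and avoids any probabilistic description of the Dirichlet operator; your heat-kernel method is more intuitive and would extend readily to other generators admitting a FKI representation, at the price of developing the killed magnetic FKI and the width-$L$ collar lemma.
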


For the non-relativistic Schr\"odinger operator, such a result has been obtained in \cite{DIM} for $V_-=0$ and
in \cite{I} for $V_-\ne 0$. There are several results concerning the existence and unicity of
IDS for non-relativistic Schr\"odinger operators without magnetic field (see \cite{DIM} for
references). The case of a constant magnetic field has been treated in \cite{HS}
($V\in C^\infty(\R^d)$ periodic) or in \cite{HLMW} ($V$ random potential, eventually unbounded from below). We also mention the results in \cite{B1,B2,BR,CHKR,HS} and references therein.

The existence of IDS has been proved in \cite{I} for periodic magnetic fields and scalar potentials.
The second goal of this paper is to extend this last result to the relativistic case.

We  consider a lattice $\Gamma$ in $\mathbb R^d$, generated by a base $e_1,\dots,e_d$:
\begin{equation*}
\Gamma=\{\sum_{j=1}^d\alpha_je_j\mid \alpha_j\in\mathbb Z,\ 1\le j\,\le d\}.
\end{equation*}
Let us also denote by $F$ a fundamental domain of $\R^d$ with respect to $\Gamma$; for instance
\begin{equation*}
F=\{\sum_{j=1}^dt_je_j\mid 0\le t_j<1,1\le j\,\le d\}.
\end{equation*}
We make the following hypothesis:

(v) $V$ and $B_{jk},\,1\le j,k\le d$ are $\Gamma$-periodic functions.

\begin{theorem}\label{riodic}
Under the hypothesis (i), (ii'), (iii), (iv) and (v), the integrated density of states of $H$ exists and for each
$f\in C_0(\R)$ we have
\begin{equation}\label{iodic}
\underset{\mathcal F\ni\Omega\rightarrow\R^d}{\lim}\ |\Omega|^{-1}{\rm tr}\left[\boldsymbol{1}_\Omega f(H)\boldsymbol{1}_\Omega\right]=|F|^{-1}{\rm tr}_\Gamma f(H),
\end{equation}
where ${\rm tr_\Gamma}$ is the $\Gamma$-trace in the sense of Atiyah \cite{A}.
\end{theorem}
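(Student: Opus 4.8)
The plan is to invoke Theorem \ref{main}: hypotheses (i), (ii'), (iii), (iv) being assumed, it suffices to prove that the density of states $\mu$, attached to the family $\big\{|\Omega|^{-1}{\rm tr}[\boldsymbol{1}_\Omega f(H)\boldsymbol{1}_\Omega]\big\}_{\Omega\in\F}$, exists; the coincidence $\mu^D=\mu$ is then automatic, and (\ref{iodic}) will come out of the explicit computation of the limit. So the whole matter is to analyse the right-hand side of (\ref{defg}) when $B$ and $V$ are $\Gamma$-periodic.

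First I would use periodicity of $B$ to bring in the magnetic translations $\{U_\gamma\}_{\gamma\in\Gamma}$, $(U_\gamma u)(x)=e^{i\lambda_\gamma(x)}u(x-\gamma)$, where $\lambda_\gamma\in C^\infty_{\rm pol}(\R^d)$ is chosen with $d\lambda_\gamma=A-A(\cdot-\gamma)$, which is possible because $d(A-A(\cdot-\gamma))=B-B(\cdot-\gamma)=0$ on the simply connected $\R^d$. Gauge covariance of the quantization (\ref{of}) together with $\Gamma$-periodicity of $V$ then yield $U_\gamma H U_\gamma^{-1}=H$ for all $\gamma\in\Gamma$; the $U_\gamma$ form a projective unitary representation of $\Gamma$ whose multiplier is governed by the magnetic flux, and $f(H)$ belongs to the associated commutant von Neumann algebra, on which Atiyah's $\Gamma$-trace ${\rm tr}_\Gamma$ is defined.

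The analytic core is the claim that for $f\in C_0(\R)$ the operator $f(H)$ is an integral operator with jointly continuous kernel $K_{f(H)}$ satisfying the covariance relation $K_{f(H)}(x+\gamma,y+\gamma)=e^{i\lambda_\gamma(x)}e^{-i\lambda_\gamma(y)}K_{f(H)}(x,y)$. For this I would start from the Feynman-Kac-It\^{o} representation of $e^{-tH}$ from Section 4, which provides a continuous kernel $p^H_t(x,y)$ with Gaussian-type control, uniform on compact sets, once $V_+\in L^2_{\rm loc}$ and $V_-\in\K_d$; writing a general $f\in C_0(\R)$ through the Laplace transform — or approximating it in sup-norm on the spectrum by linear combinations of $\lambda\mapsto e^{-t\lambda}$ and of resolvent powers $(H+c)^{-N}$ with $c$ large, the latter being locally trace-class by Section 5 — one transfers continuity of the kernel from the semigroup to $f(H)$, and the covariance relation is inherited because $U_\gamma$ commutes with $H$. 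On the diagonal the phases cancel, so $x\mapsto K_{f(H)}(x,x)$ is a bounded $\Gamma$-periodic continuous function and, by definition of Atiyah's trace, ${\rm tr}_\Gamma f(H)=\int_F K_{f(H)}(x,x)\,dx$.

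It remains to run the averaging argument. Since $\boldsymbol{1}_\Omega f(H)\boldsymbol{1}_\Omega\in\mathcal I_1$ (Section 5) with continuous kernel, ${\rm tr}[\boldsymbol{1}_\Omega f(H)\boldsymbol{1}_\Omega]=\int_\Omega K_{f(H)}(x,x)\,dx$. Tiling $\R^d$ by the $\Gamma$-translates of $F$, the translates $\gamma+F$ contained in $\Omega$ contribute exactly $N_\Omega\int_F K_{f(H)}(x,x)\,dx$, where $N_\Omega$ is their number, while every translate meeting $\Omega$ but not contained in it lies in $\{x:\,{\rm dist}(x,\partial\Omega)<1\}$ once $\Omega$ is large; by Hypothesis (iv) this set has measure $o(|\Omega|)$, hence — using boundedness of $K_{f(H)}$ on the diagonal — the boundary translates contribute $o(|\Omega|)$ and $N_\Omega|F|=|\Omega|+o(|\Omega|)$. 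Dividing by $|\Omega|$ and letting $\F\ni\Omega\to\R^d$ gives $|\Omega|^{-1}{\rm tr}[\boldsymbol{1}_\Omega f(H)\boldsymbol{1}_\Omega]\to|F|^{-1}\int_F K_{f(H)}(x,x)\,dx=|F|^{-1}{\rm tr}_\Gamma f(H)$, i.e. $\mu$ exists and is computed by (\ref{iodic}); Theorem \ref{main} then produces $\mu^D=\mu$ and the existence of the IDS. I expect the main obstacle to be the kernel-regularity step — extracting joint continuity of $K_{f(H)}$ and the uniform local trace-class bounds for an arbitrary $f\in C_0(\R)$ from the Feynman-Kac-It\^{o} estimates under the relatively mild Hypothesis (ii') — rather than the tiling argument, which is routine once (iv) is in hand.
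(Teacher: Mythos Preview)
Your overall strategy coincides with the paper's: reduce via Theorem~\ref{main} to the existence of $\mu$, establish commutation of $H$ with magnetic translations, express ${\rm tr}[\boldsymbol{1}_\Omega f(H)\boldsymbol{1}_\Omega]$ as $\int_\Omega K_{f(H)}(x,x)\,dx$, and run a F{\o}lner-type tiling argument using hypothesis~(iv). The paper's construction of the magnetic translations is more concrete than yours (it first splits $B=B^0+dA^{(p)}$ with $B^0$ constant and $A^{(p)}$ $\Gamma$-periodic, then takes $T_\gamma=U_\gamma L_\gamma$ with an explicit linear phase $\varphi_\gamma$), but this is equivalent to your gauge-covariance argument.

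The one substantive difference is the kernel-regularity step, and here the paper takes the easier road. You aim for joint continuity of $K_{f(H)}$ via Feynman--Kac--It\^o, which under (ii') (with $V_+$ merely in $L^2_{\rm loc}$, possibly unbounded) is genuinely delicate and, as you suspect, is the weak point of your sketch. The paper bypasses this entirely: it observes that $f(H)\in\mathcal I_1^\Gamma$ directly from Corollary~5.3 and the commutation with $T_\gamma$, and then invokes an abstract lemma (Lemma~8.4, taken from \cite{I}) stating that for any self-adjoint $S\in\mathcal I_1^\Gamma$ the integral kernel has a well-defined, locally integrable restriction to the diagonal with ${\rm tr}_\Gamma S=\int_F K_S(x,x)\,dx$. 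No continuity or boundedness of $K_{f(H)}(x,x)$ is claimed or needed; local integrability together with $\Gamma$-periodicity of the diagonal suffices for the tiling argument, since each $\Gamma$-translate of $F$ contributes the same finite integral $\int_F|K_{f(H)}(x,x)|\,dx$ and the number of boundary cells is $o(|\Omega|/|F|)$ by~(iv). So your instinct that the obstacle lies in kernel regularity is right, but the resolution is not to push FKI harder---it is to replace the pointwise argument by a trace-class one.
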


The plan of this paper is as follows:

In Section 2 we review first some properties of the magnetic pseudodifferential calculus, established
in \cite{IMP1,IMP2}. Some refined result about commutators are obtained and one overlines approximation
by regularisations (using the magnetic convolution) and cut-offs.

In Section 3 we present the Feller semi-group defined by the free Hamiltonian $H_0$ and the associated L\'evy process.
The diamagnetic inequality (\ref{3.10}) will be a consequence of a Feynman-Kac-It\^o formula for the
relativistic Hamiltonian $H_A$.

Section 4 is devoted to the construction of the relativistic Schr\"odinger operator $H=H(A;V)$.
Using the Feynman-Kac-It\^o formula, representing the semi-group generated by $H$, we prove the important fact that $C^\infty_0(\mathbb{R}^d)$ is an essential domain for the form associated to $H$ and we present some
consequences regarding commutators and covariance under gauge transformations.

In Section 5 we estimate the trace norm of some operators of the form $\boldsymbol{1}_\Omega f(H)\boldsymbol{1}_\Omega$, $\Omega$
being a bounded open subset of $\R^d$ and $f:\R\rightarrow\C$ a suitable function.
The hypothesis $V\in\mathcal K_d$ is essential, allowing us to use some
estimations for the integral kernel of the semi-group generated by $H(0;-V_-)$ (cf. \cite{vC,DvC}).

In Section 6 one defines $H_\Omega$ as a pseudo-self-adjoint operator on $L^2(\R^d)$, using a result
in \cite{S1} on monotone
sequences of quadratic forms. One also estimates the $\mathcal I_1$-norm of operators of the form $f(H_\Omega)$.

Section 7 is dedicated to the proof of Theorem \ref{main}. The main difficulty is the $\mathcal I_1$-norm
estimate of operators of the form $\boldsymbol{1}_\Omega(H+\lambda)^{-m}\boldsymbol{1}_\Omega-(H_\Omega+\lambda)^{-m}$.
Then, using ideas from \cite{DIM} finishes the proof.

In Section 8 we prove Theorem \ref{riodic}, on the lines of the proof of Theorem 1.6 from \cite{I}.

\section{The magnetic pseudodifferential calculus}\label{secintro}

Let us recall first some properties of operators defined by (\ref{of}), proved in \cite{IMP1}. We are  going to
assume everywhere that $B=dA$ fulfills hypothesis (i).

\begin{definition}\label{Sm}
Let $m\in\R$.

(a) A function $f\in C^\infty(\R^{2d})$ belongs to the symbol space $S^m(\R^d)$ if for any $\alpha,\beta\in\mathbb N^d$
there is a constant $C_{\alpha,\beta}>0$ such that
\begin{equation*}
|\left(\partial^\alpha_x\partial^\beta_\xi f\right)(X)|\le C_{\alpha,\beta}<\xi>^{m-\beta},\ \ \ \ \ X=(x,\xi)\in\R^{2d}.
\end{equation*}
The space $S^m(\R^d)$ is endowed with the usual Fr\'echet topology.

(b) $S^{-\infty}(\mathbb R^d):=\cap_{m\in\R}S^m(\R^d)$ is endowed with the projective limit topology.

(c) A symbol $f\in S^m(\R^d)$ is called {\rm elliptic} if for some positive constants $C,R$ one has
\begin{equation*}
|f(X)|\ge C<\xi>^m,\ \ \ \ \ \forall X=(x,\xi)\in\R^{2d},\ \ |\xi|\ge R.
\end{equation*}

(d) We call {\rm principal symbol} of an operator of the form $\Op(f)$, where $f\in S^m(\R^d)$, any element
$f_0\in S^m(\R^d)$ such that $f-f_0\in S^{m-1}(\R^d)$.
\end{definition}

\begin{proposition}\label{rima}
Let $f\in S^m(\R^d)$ and $g\in S^{m'}(\R^d)$.

(a) $\Op(f)$ is a continuous linear operator on $\mathcal S(\R^d)$ and on $\mathcal S'(\R^d)$.

(b) $\Op(\overline f)$ is the formal adjoint of $\Op(f)$, i.e.
$$
\left(\Op(f)u,v\right)_{L^2(\R^d)}=\left(u,\Op(\overline f)v\right)_{L^2(\R^d)},\ \ \ \ \ \forall u,v\in \mathcal S(\R^d).
$$

(c) There exists a unique element $f\circ^B g\in S^{m+m'}(\R^d)$ such that
$$\Op(f)\Op(g)=\Op(f\circ^B g).$$
Moreover, a principal symbol of $\Op(f)\Op(g)$ is $fg$.
\end{proposition}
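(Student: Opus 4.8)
\emph{Proof plan.} The idea is to view $\Op(f)$ as an oscillatory integral operator and to isolate the single feature distinguishing it from the ordinary Weyl quantization, namely the \emph{magnetic phase factor}
\[
\Lambda^A(x,y):=e^{i\langle x-y,\Gamma^A(x,y)\rangle}=\exp\Big(-i\int_{[x,y]}A\Big) ,
\]
so that $[\Op(f)u](x)=\int K_f(x,y)u(y)\,dy$ with Schwartz kernel $K_f(x,y)=\int\dbar\xi\,e^{i\langle x-y,\xi\rangle}\Lambda^A(x,y)\,f\big(\tfrac{x+y}{2},\xi\big)$. The factor $\Lambda^A$ has modulus $1$ and --- this is where Hypothesis (i) is used --- all of its derivatives are controlled by $B=dA\in BC^\infty(\R^d)$, since differentiating $\int_{[x,y]}A$ in the endpoints produces fluxes of $B$ along segments issuing from $x$ and $y$; thus $\Lambda^A\in C^\infty(\R^{2d})$, with polynomially bounded derivatives.

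Part (a) is then the classical pseudodifferential argument, carried out on the above oscillatory integral: integrating by parts in $\xi$ converts the position weights $(x-y)^\alpha$ into the order-lowering operators $\partial_\xi^\alpha$ acting on $f\in S^m(\R^d)$; integrating by parts in $y$ absorbs the powers of $\xi$ produced when $\partial_x$ hits the exponential (these derivatives then land only on $\Lambda^A$, on $f$, and on the Schwartz function $u$); and $y^\alpha\partial^\beta u$ is again Schwartz. Using that the derivatives of $\Lambda^A$ are benign by Hypothesis (i), one bounds every Schwartz seminorm of $\Op(f)u$ by finitely many Schwartz seminorms of $u$, so $\Op(f)$ is continuous on $\S(\R^d)$. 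Continuity on $\S'(\R^d)$ follows by transposition, once one notes that the transpose of $\Op(f)$ is itself an operator of the same type --- the one attached to the symbol $f(x,-\xi)\in S^m(\R^d)$ and the magnetic field $-B$, which again satisfies Hypothesis (i) --- and hence continuous on $\S(\R^d)$ by what was just shown. Part (b) uses the same estimates: they make the triple integral for $(\Op(f)u,v)_{L^2(\R^d)}$ absolutely convergent, and then swapping the names of $x$ and $y$, reflecting $\xi\mapsto-\xi$, and using the identity $\Lambda^A(y,x)=\overline{\Lambda^A(x,y)}$ (which holds because $\Gamma^A(y,x)=\Gamma^A(x,y)$) turns it into $(u,\Op(\overline f)v)_{L^2(\R^d)}$.

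For part (c) I would compute the kernel of the composition, $K(x,z)=\int K_f(x,y)K_g(y,z)\,dy$, and read off the symbol. The crucial ingredient is the cocycle identity
\[
\Lambda^A(x,y)\,\Lambda^A(y,z)=\Lambda^A(x,z)\,e^{-i\Phi^B(x,y,z)},\qquad \Phi^B(x,y,z):=\int_{T(x,y,z)}B ,
\]
where $T(x,y,z)$ is the triangle with vertices $x,y,z$; it is Stokes' theorem applied to $B=dA$. By Hypothesis (i), $\Phi^B$ is smooth, bounded by $\tfrac12\|B\|_{\infty}\,|x-y|\,|x-z|$, and has derivatives dominated by polynomials in the three edge vectors. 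Inserting this identity and changing variables so that $\tfrac{x+z}{2}$ becomes the base point, one exhibits $\Op(f)\Op(g)$ in the form $\Op(h)$ with $h$ an oscillatory integral in auxiliary phase-space variables; a Taylor expansion of $f$, $g$, and $e^{-i\Phi^B}$ about that point, followed by Gaussian integration in the auxiliary momenta, yields the asymptotic expansion $h\sim\sum_{j\ge0}h_j$ with $h_j\in S^{m+m'-j}(\R^d)$, so $h=f\circ^B g\in S^{m+m'}(\R^d)$ with leading term $h_0=fg$; in particular $fg$ is a principal symbol of $\Op(f)\Op(g)$. Uniqueness of $f\circ^B g$ is immediate because $\Op$ is injective on symbols: since $\Lambda^A$ never vanishes, the kernel $K_h$ determines $\int\dbar\zeta\,e^{i\langle x-z,\zeta\rangle}h(\tfrac{x+z}{2},\zeta)$, that is, the inverse Fourier transform of $h$ in its momentum variable, everywhere.

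The seminorm bookkeeping in (a)--(b) and the oscillatory-integral/stationary-phase computations in (c) are routine. The genuine difficulty --- the feature absent from the non-magnetic calculus --- is the uniform control, in (c), of the extra factor $e^{-i\Phi^B(x,y,z)}$: it has modulus $1$, but the phase $\Phi^B$ and its derivatives grow polynomially in the edge lengths $|x-y|$ and $|y-z|$, so to obtain the uniform $S^{m+m'}$ seminorm bounds for $f\circ^B g$ (and the $S^{m+m'-j}$ bounds for the remainders) one must carefully balance this growth against the decay produced by the oscillations in the two auxiliary momenta. This balancing is exactly the point where Hypothesis (i) --- boundedness of $B$ \emph{together with all its derivatives} --- enters in full strength.
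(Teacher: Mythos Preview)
The paper does not prove this Proposition at all; the opening sentence of Section~2 states that these properties are recalled from \cite{IMP1}, and the Proposition is simply quoted. Your sketch is essentially the argument carried out in that reference, so there is no discrepancy in approach to report.

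One point of precision: in part (a), your claim that ``differentiating $\int_{[x,y]}A$ in the endpoints produces fluxes of $B$'' is not quite right as stated --- the derivative $\partial_{x_k}\int_{[x,y]}A$ also contains a boundary term $-\Gamma^A_k(x,y)$ involving $A$ directly, not only $B$. What you actually need for (a) is that $\Lambda^A$ is smooth with polynomially bounded derivatives, and this follows immediately from $A\in C^\infty_{\rm pol}(\R^d)$; polynomial growth in $(x,y)$ is harmless for continuity on $\S(\R^d)$ since Schwartz decay absorbs it. The gauge-invariant flux interpretation becomes essential only in part (c), where your cocycle identity $\Lambda^A(x,y)\Lambda^A(y,z)=\Lambda^A(x,z)\,e^{-i\Phi^B(x,y,z)}$ is precisely what the paper encodes as $\omega^B(x,y,z)=e^{-4iF_B(x,y,z)}$ in formulas (\ref{2.3})--(\ref{2.4}), with the decisive structural fact that $F_B$ depends only on $B$ and has the form (\ref{2.4}) with $BC^\infty$ coefficients. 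Your identification of this as the place where Hypothesis (i) is used in full strength is correct.
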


\begin{proposition}\label{aoa}
Let $f\in S^0(\R^d)$. Then $\Op (f)\in B(L^2(\R^d))$, and its norm in $B(L^2(\R^d))$ is dominated by a semi-norm of $f$ in $S^0(\R^d)$.
\end{proposition}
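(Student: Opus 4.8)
The plan is to establish the $L^2$-boundedness by a Calder\'on--Vaillancourt type argument, adapted to the magnetic quantization $\Op$ defined in (\ref{of}). First I would reduce to the statement that the operator $T:=\Op(f)$ satisfies $\snorm{T}_{B(L^2)}\le C\cdot p(f)$ for some fixed Schwartz-type seminorm $p$ on $S^0(\R^d)$, with $C$ depending only on the dimension $d$ and on the seminorms of the field components $B_{jk}$ in $BC^\infty(\R^d)$. A convenient route uses the Cotlar--Stein lemma: decompose $f=\sum_{j\in\N^d\times\N^d}f_j$ via a partition of unity on phase space $\R^{2d}$ into unit cubes (in $x$) and dyadic-then-unit pieces (in $\xi$, where we exploit the $<\xi>^{-\beta}$ gain on $\xi$-derivatives to normalise), and show that $\Op(f_j)^*\Op(f_k)$ and $\Op(f_j)\Op(f_k)^*$ have operator norms summable in the off-diagonal parameter. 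The composition formula from Proposition \ref{rima}(c) together with the adjoint formula in Proposition \ref{rima}(b) lets one write these products again as operators of the form $\Op(\,\cdot\,)$, and one needs the symbol of $f_j\circ^B\overline{f_k}$ to decay rapidly in $|j-k|$; the phase $<x-y,\xi+\Gamma^A(x,y)>$ contributes the magnetic twist, and one integrates by parts in $\xi$ and in $y$, using that the magnetic phase difference has controlled derivatives precisely because $B_{jk}\in BC^\infty(\R^d)$ (so $\Gamma^A$, though itself only polynomially bounded, enters only through differences whose derivatives are bounded by the transversal-gauge construction (\ref{writ})).

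An alternative, and perhaps cleaner, route is to invoke the results of \cite{IMP1} directly: the composition calculus there already shows $f\mapsto\Op(f)$ maps $S^0$ into bounded operators on $\S$ and $\S'$, and one upgrades this to $B(L^2)$ by the standard trick $T^*T=\Op(\overline f\circ^B f)$ with $\overline f\circ^B f\in S^0$, so that $\snorm{T}^2=\snorm{T^*T}$, and then applying a resolvent/square-root iteration: $M^2-\Op(\overline f\circ^B f)=\Op(g)$ with $g\in S^0$, $g\ge$ some positive constant for $M$ large, and one must extract a symbol $h\in S^0$ with $h\circ^B h=g$ up to a smoothing (hence $L^2$-bounded by density and the $\S'$-continuity) remainder. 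This is the Hörmander square-root parametrix argument transplanted to $\circ^B$; the smoothing remainder lies in $S^{-\infty}$, and one checks separately — e.g.\ via a crude Schur test on its rapidly decaying kernel — that $\Op$ of an $S^{-\infty}$ symbol is bounded on $L^2$ with norm controlled by finitely many symbol seminorms.

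The main obstacle I anticipate is bookkeeping the dependence of all constants on a \emph{fixed finite} set of seminorms of $f$, rather than merely proving qualitative boundedness: the Cotlar--Stein sums and the parametrix iteration each a priori invoke infinitely many seminorms, and one must track that the magnetic corrections (which involve $\Gamma^A$ and its derivatives, i.e.\ the $B_{jk}$ seminorms) do not force $f$-seminorms of unbounded order into the estimate. Controlling the magnetic phase is the genuinely new point compared to the classical case: the key lemma is that for the transversal gauge the quantity $\Gamma^A(x,y)$ has all derivatives of order $\ge 1$ bounded (uniformly, by Hypothesis (i)), so the oscillatory-integral manipulations in (\ref{of}) go through with the same integration-by-parts gains as in the non-magnetic Weyl calculus. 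Once that lemma is in hand, the proof is a routine, if lengthy, adaptation; I would present the Cotlar--Stein version as it keeps the seminorm dependence most transparent.
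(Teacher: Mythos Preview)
The paper does not prove this proposition: it is one of the results explicitly \emph{recalled} from \cite{IMP1} at the start of Section~\ref{secintro}, so there is no in-paper proof to compare against. In \cite{IMP1} the argument is essentially your second route --- the H\"ormander/Beals square-root iteration, reducing $L^2$-boundedness of $\Op(f)$ for $f\in S^0$ to that of $\Op(g)$ for $g\in S^{-\infty}$ via $\circ^B$-parametrices, and handling the smoothing remainder by a direct kernel estimate. So your second proposal is aligned with the cited reference, and the Cotlar--Stein approach is a legitimate alternative.

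There is, however, a genuine technical error in what you flag as the ``key lemma''. It is \emph{not} true that in the transversal gauge $\Gamma^A(x,y)$ has all derivatives of order~$\ge 1$ uniformly bounded. From $A_j(x)=-\sum_k x_k\int_0^1 s\,B_{jk}(sx)\,ds$ one sees that already $\partial_{x_l}A_j$ contains the term $-\sum_k x_k\int_0^1 s^2(\partial_l B_{jk})(sx)\,ds$, which grows linearly in $|x|$ for generic $B\in BC^\infty$; the same growth propagates to $\partial_{x_l}\Gamma^A(x,y)$. Consequently a direct integration-by-parts attack on the kernel in (\ref{of}) via the phase $\langle x-y,\xi+\Gamma^A(x,y)\rangle$ does \emph{not} go through as in the non-magnetic Weyl case.

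What saves both of your strategies --- and this is the point you should replace your key lemma with --- is that one never needs to control $\Gamma^A$ itself. The composition $\overline{f_j}\circ^B f_k$ (or $\overline f\circ^B f$, in the square-root argument) is given by formula (\ref{2.3}), in which the magnetic contribution enters only through the \emph{gauge-invariant} factor $\omega^B(x,y,z)=e^{-4iF_B(x,y,z)}$, and $F_B$ has the structure (\ref{2.4}): it is linear in $(y,z)$ with coefficients $D_j,E_j\in BC^\infty(\R^{3d})$ depending on $B$ alone. This is exactly what makes the oscillatory-integral estimates for $\circ^B$ close with finitely many seminorms of $f$ and of $B$, and is the correct replacement for your claim about $\Gamma^A$. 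Once you rewrite the ``key lemma'' in those terms, both the Cotlar--Stein decomposition and the square-root parametrix go through; the seminorm bookkeeping you worry about is then the same as in the classical $S^0_{1,0}$ case, with the extra magnetic factor absorbed by finitely many $BC^\infty$-seminorms of $B_{jk}$.
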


\begin{definition}
Let $s\in\R_+,\ p_s (\xi):=<\xi>^s\in S^s(\R^d),\ P_s:=\Op(p_s).$

a) An element $u\in L^2(\R^d)$ belongs to the magnetic Sobolev space $\H^s_A (\R^d)$ if $P_s u\in L^2(\R^d)$.
$\H^s_A(\R^d)$ is a Hilbert space for the norm
\begin{equation}\label{norm}
\|u\|_{s,A}:=\left( \|P_s u\|^2_{L^2(\R^d)}+\|u\|^2_{L^2(\R^d)}\right)^{1/2},
\end{equation}
and $\mathcal S(\R^d)$ is dense in $\H^s_A(\R^d)$.

b) $\H^{-s}_A(\R^d)$ will be the dual of $\H^s_A(\R^d)$ endowed with the natural norm.
\end{definition}

\begin{remark} If $s\in\N$, we have
$$\H^s_A(\R^d)=\{u\in L^2(\R^d)\mid\,(D-A)^{\alpha} u\in L^2(\R^d),\ \forall\,\alpha\in\N^d,\,|\alpha|\leq s\},$$
and a norm equivalent to (\ref{norm}) is given by
$$\|u\|'_{s,A}=\left(\underset{|\alpha|\leq s}{\sum}\|(D-A)^{\alpha}u\|^2_{L^2(\R^d)}\right)^{1/2}.$$
We used the notation $(D-A)^{\alpha}=(D_1-A_1)^{\alpha_1}\dots (D_n-A_n)^{\alpha_n}$.
\end{remark}

\begin{proposition}\label{for}
For each $s,m\in\R$ and each $f\in S^m(\R^d)$,
$$\Op (f)\in B\left(\H^s_A(\R^d),\H^{s-m}_A(\R^d)\right).$$
\end{proposition}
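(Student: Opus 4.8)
The plan is to reduce the statement, through the composition formula of Proposition~\ref{rima}(c) and the definition of the magnetic Sobolev spaces, to the single estimate
\[
(\star)\qquad \snorm{\Op(h)u}_{L^2(\R^d)}\le C\,\snorm{u}_{s,A}\qquad\text{for }h\in S^s(\R^d),\ u\in\H^s_A(\R^d),
\]
proved uniformly for $s\in\R$ (here $\H^s_A(\R^d)$ with $s<0$ is the dual space, and $\langle\cdot\rangle^t\in S^t(\R^d)$ for every $t\in\R$). Granting $(\star)$: if $s-m\ge 0$ then $\snorm{\Op(f)u}_{s-m,A}^2=\snorm{P_{s-m}\Op(f)u}^2+\snorm{\Op(f)u}^2$, where $P_{s-m}\Op(f)=\Op(p_{s-m}\circ^B f)$ with $p_{s-m}\circ^B f\in S^s(\R^d)$ by Proposition~\ref{rima}(c), and $f\in S^m(\R^d)\subseteq S^s(\R^d)$ because $m\le s$; hence both summands are dominated by $\snorm{u}_{s,A}$, so $\Op(f)\in B\big(\H^s_A(\R^d),\H^{s-m}_A(\R^d)\big)$. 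If instead $s-m<0$ and $s\le 0$, then $(m-s)-m=-s\ge 0$, so the previous case gives $\Op(\overline f)\in B\big(\H^{m-s}_A(\R^d),\H^{-s}_A(\R^d)\big)$; since $\Op(\overline f)$ is the formal adjoint of $\Op(f)$ on $\S(\R^d)$ (Proposition~\ref{rima}(b)) and $\S(\R^d)$ is dense in the relevant spaces, the transpose of this operator is bounded from $\H^s_A(\R^d)=\big(\H^{-s}_A(\R^d)\big)^{*}$ into $\H^{s-m}_A(\R^d)=\big(\H^{m-s}_A(\R^d)\big)^{*}$ and coincides there with $\Op(f)$.

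To prove $(\star)$ for $s\ge 0$ I would build a finite parametrix of $P_s$ within the calculus. Since $p_{-s}p_s=1$ is a principal symbol of $\Op(p_{-s})\Op(p_s)$, Proposition~\ref{rima}(c) gives $p_{-s}\circ^B p_s=1+r_1$ with $r_1\in S^{-1}(\R^d)$. Fixing $N\in\N^{*}$ with $N\ge s$ and setting $q:=\big(\sum_{k=0}^{N-1}(-1)^k r_1^{\circ k}\big)\circ^B p_{-s}$, where $r_1^{\circ k}$ is the $k$-fold $\circ^B$-product of $r_1$ with itself ($r_1^{\circ 0}:=1$, so $r_1^{\circ k}\in S^{-k}(\R^d)$ and $q\in S^{-s}(\R^d)$), a telescoping computation based on $\Op(p_{-s})\Op(p_s)=I+\Op(r_1)$ gives
\[
\Op(q)\,P_s=I+(-1)^{N-1}\Op\big(r_1^{\circ N}\big)=:I+\Op(r),\qquad r\in S^{-N}(\R^d).
\]
Consequently, for $h\in S^s(\R^d)$ and $u\in\H^s_A(\R^d)$, using Proposition~\ref{rima}(c) again,
\[
\Op(h)u=\Op\big(h\circ^B q\big)\,(P_s u)-\Op\big(h\circ^B r\big)\,u,
\]
with $h\circ^B q\in S^0(\R^d)$ and $h\circ^B r\in S^{s-N}(\R^d)\subseteq S^0(\R^d)$; by Proposition~\ref{aoa} both operators are bounded on $L^2(\R^d)$, and since $P_s u,u\in L^2(\R^d)$ this yields $\snorm{\Op(h)u}\le C\big(\snorm{P_s u}+\snorm{u}\big)\le C'\snorm{u}_{s,A}$. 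For $s<0$ one has $\Op(\overline h)\in B\big(L^2(\R^d),\H^{-s}_A(\R^d)\big)$ by the already proved case $s-m\ge 0$ applied with source index $0$, and transposing as above yields $\Op(h)\in B\big(\H^s_A(\R^d),L^2(\R^d)\big)$, which is $(\star)$.

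These two cases settle all pairs $(s,m)$ with $s\ge m$ and all with $s\le 0$; the only range left is $0<s<m$. There the endpoints $\Op(f)\in B(\H^0_A,\H^{-m}_A)$ and $\Op(f)\in B(\H^m_A,\H^0_A)$ are already available, and since the family $\big(\H^s_A(\R^d)\big)_{s\in\R}$ interpolates like the usual Sobolev scale, complex interpolation at $\theta=s/m$ gives $\Op(f)\in B\big(\H^s_A(\R^d),\H^{s-m}_A(\R^d)\big)$. (One can instead avoid interpolation here by a finite induction on the order of the symbol: modulo a remainder of order $m-1$, $\Op(f)$ equals $P_s\,\Op\big(p_{-s}\circ^B f\big)$ with $p_{-s}\circ^B f\in S^{m-s}(\R^d)$ and $m-s<m$, so after finitely many steps one lands in an already-treated regime.)

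The crux of the argument is the parametrix step, and the only point needing care there is that the calculus of Propositions~\ref{rima}--\ref{aoa} be rich enough to invert the elliptic operator $P_s$ modulo a remainder regularizing to an arbitrary finite order; this is why the construction above uses only finitely many applications of the composition formula, so that no asymptotic summation of symbols---hence nothing beyond Propositions~\ref{rima} and \ref{aoa}---is required. The second delicate point is the honest handling of the indices $0<s<m$, for which the cleanest route is to invoke the interpolation property of the magnetic Sobolev spaces established in \cite{IMP1}.
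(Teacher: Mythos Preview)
The paper does not prove this proposition; it appears in Section~\ref{secintro} among the results \emph{recalled} from \cite{IMP1}, so there is no in-paper proof to compare your attempt against. Judged on its own, your argument is essentially the standard parametrix route and is correct in outline: the finite-order parametrix $\Op(q)P_s=I+\Op(r)$ with $q\in S^{-s}$, $r\in S^{-N}$ is built using only Proposition~\ref{rima}(c); the key estimate $(\star)$ for $s\ge 0$ then follows from Proposition~\ref{aoa}; and the duality reduction via Proposition~\ref{rima}(b) handles the case $s\le 0$.

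The one place to tighten is the range $0<s<m$. Your interpolation argument is fine provided the identity $[\H^0_A,\H^m_A]_\theta=\H^{\theta m}_A$ is available; this does follow once one identifies $\H^s_A$ with the domain of a power of $H_A+1$ through Proposition~\ref{verif}(b), so deferring to \cite{IMP1} is legitimate. Your alternative ``finite induction'' is, however, too vaguely stated to be verified as written: the decomposition $\Op(f)=P_s\,\Op(p_{-s}\circ^B f)+\text{(order }m-1)$ shifts the problem to proving boundedness of $P_s$ between two magnetic Sobolev spaces of possibly awkward indices, and it is not clear the iteration terminates inside the already-treated cases. A cleaner substitute, which closes the gap in one step and avoids interpolation, is to reuse your own parametrix identity on the source side: writing $\Op(f)=\Op(f\circ^B q)\,P_s+\Op(f\circ^B r)$ with $N\ge m$, the first summand maps $\H^s_A\to\H^{s-m}_A$ because $P_su\in L^2$ and $\Op(f\circ^B q)\colon L^2\to\H^{s-m}_A$ is your case $s'=0$ (already proved), while $f\circ^B r\in S^{m-N}\subset S^0$ gives $\Op(f\circ^B r)u\in L^2\hookrightarrow\H^{s-m}_A$.
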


\begin{proposition}\label{let}
Let $p\in S^m(\R^d)$ be real and elliptic, $m\geq 0$. We assume $p(X)\geq 0$ for $|\xi|\geq R$ ($R>0$ large enough).
Then the operator $\Op (p)$, defined on $S(\R^d)$, is essentially self-adjoint in $L^2(\R^d)$. Its closure
$P$ will be a lower semi-bounded self-adjoint operator on the domain $\H^m_A(\R^d)$.
\end{proposition}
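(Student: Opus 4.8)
The plan is to reduce to a uniformly positive elliptic symbol, invoke the elliptic parametrix of the calculus, and extract an approximate square root. First, realness of $p$ together with Proposition~\ref{rima}(b) shows that $\Op(p)$ is symmetric on $\S(\R^d)$. Next, the hypotheses force $p$ to be bounded from below as a function on $\R^{2d}$: for $|\xi|\ge R$, ellipticity and $p\ge 0$ give $p(X)\ge C\langle\xi\rangle^m\ge C$ (using $m\ge 0$), while for $|\xi|\le R$ the defining estimate $|p(X)|\le C_{0,0}\langle\xi\rangle^m$ bounds $p$ from below; hence $p\ge -M$ on $\R^{2d}$ for some $M\ge 0$. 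Setting $q:=p+M+1$ we obtain a real elliptic symbol in $S^m(\R^d)$ with $q\ge 1$, and in fact $q\ge c\langle\xi\rangle^m$ globally, so $1/q\in S^{-m}(\R^d)$ and, more generally, $q^{\,s}\in S^{sm}(\R^d)$ for every $s\in\R$. Since $\Op(p)=\Op(q)-(M+1)\,\mathrm{Id}$, it suffices to treat $\Op(q)$.

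For self-adjointness and the identification of the domain: as $q$ is elliptic, the symbolic calculus of Proposition~\ref{rima}(c) (asymptotic summation of symbols) produces a parametrix $e\in S^{-m}(\R^d)$ with $e\circ^B q-1\in S^{-\infty}(\R^d)$ and $q\circ^B e-1\in S^{-\infty}(\R^d)$; i.e. $\Op(e)\Op(q)=\mathrm{Id}+\Op(r)$ and $\Op(q)\Op(e)=\mathrm{Id}+\Op(r')$ with $r,r'\in S^{-\infty}(\R^d)$. Combining this with Proposition~\ref{for} gives, for $u\in\S(\R^d)$, the two-sided bound $c\,\|u\|_{m,A}\le\|\Op(q)u\|+\|u\|\le C\,\|u\|_{m,A}$, the lower inequality coming from $u=\Op(e)\big(\Op(q)u\big)-\Op(r)u$ and the mapping properties $\Op(e),\Op(r)\colon L^2(\R^d)\to\H^m_A(\R^d)$. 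Thus the graph norm of $\Op(q)$ on $\S(\R^d)$ is equivalent to $\|\cdot\|_{m,A}$; since $\S(\R^d)$ is dense in $\H^m_A(\R^d)$, the closure $Q$ of $\Op(q)\big|_{\S(\R^d)}$ has domain exactly $\H^m_A(\R^d)$ and agrees there with the bounded operator $\Op(q)\colon\H^m_A(\R^d)\to L^2(\R^d)$ of Proposition~\ref{for}; in particular $Q$ is symmetric. If now $v\in D(Q^*)$, then $\Op(q)v=Q^*v\in L^2(\R^d)$ holds in $\S'(\R^d)$ (Proposition~\ref{rima}(a),(b)), whence $v=\Op(e)\big(\Op(q)v\big)-\Op(r)v\in\H^m_A(\R^d)=D(Q)$; so $Q^*\subset Q$ and $Q=Q^*$. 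Therefore $\Op(q)\big|_{\S(\R^d)}$ is essentially self-adjoint, its closure $Q$ is self-adjoint with domain $\H^m_A(\R^d)$, and the same conclusions hold for $\Op(p)\big|_{\S(\R^d)}$, with closure $P=Q-(M+1)\,\mathrm{Id}$.

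For lower semi-boundedness: $q^{1/2}\in S^{m/2}(\R^d)$ is real, elliptic and bounded below by a positive constant, and a finite iteration of Proposition~\ref{rima}(c), correcting the remainder one order at a time (each correction being real, since at every stage the operator obtained is formally self-adjoint and hence has a real symbol), yields a real symbol $b\in S^{m/2}(\R^d)$ with $r_0:=b\circ^B b-q\in S^{-N}(\R^d)\subset S^0(\R^d)$, where $N$ is chosen $\ge m$. Then $\Op(b)^*=\Op(b)$ by Proposition~\ref{rima}(b), $\Op(b)^2=\Op(q)+\Op(r_0)$ by Proposition~\ref{rima}(c), and $\Op(r_0)\in B(L^2(\R^d))$ by Proposition~\ref{aoa}. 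Hence, for $u\in\S(\R^d)$,
$$
\big(\Op(q)u,u\big)=\|\Op(b)u\|^2-\big(\Op(r_0)u,u\big)\ \ge\ -\,\|\Op(r_0)\|_{B(L^2(\R^d))}\,\|u\|^2 ,
$$
so $\big(\Op(p)u,u\big)\ge-\big(\|\Op(r_0)\|_{B(L^2(\R^d))}+M+1\big)\|u\|^2$; by density of $\S(\R^d)$ in $D(P)=\H^m_A(\R^d)$ in the graph norm this inequality extends to all $u\in D(P)$, proving $P\ge-C$.

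The only non-routine ingredients are the elliptic parametrix $e$ and the approximate square root $b$ — that is, the asymptotic-summation machinery underlying the calculus of \cite{IMP1}; granting those, the remainder reduces to the mapping properties in Propositions~\ref{rima}--\ref{for} and elementary facts about closures and adjoints. I expect this parametrix/square-root step to be the place requiring the most care.
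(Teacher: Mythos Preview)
The paper does not prove this proposition; it is quoted from \cite{IMP1} as part of the background listed at the beginning of Section~\ref{secintro}. Your argument is the standard one for elliptic operators in a pseudodifferential calculus and is correct, so there is nothing to compare it against in the present paper.

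A few small comments. Proposition~\ref{rima}(c) as stated here gives only the existence of the composition and its principal symbol, not a full asymptotic expansion or asymptotic summation. You acknowledge this at the end, but it is worth observing that you do not actually need the $S^{-\infty}$ parametrix: iterating the one-step parametrix $e_0:=1/q\in S^{-m}$, using only the principal-symbol information of Proposition~\ref{rima}(c), already yields for each $N$ a symbol $e_N\in S^{-m}$ with $e_N\circ^B q-1\in S^{-N}$, and $N=m$ suffices both for the graph-norm equivalence and for the regularity step $v\in D(Q^*)\Rightarrow v\in\H^m_A$. The same remark applies to the approximate square root: a finite number of corrections, each determined by the principal-symbol identity in Proposition~\ref{rima}(c), brings the remainder into $S^0$, which is all you use. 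So the ``non-routine ingredients'' you flag are in fact obtainable from the statements recalled in this paper, without invoking the full asymptotic machinery of \cite{IMP1}.
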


\begin{remark} This proposition applies to the case $p(X)=<\xi>-1$.
The corresponding operator, denoted by $H_A$, will be a lower semi-bounded self-adjoint operator on
$L^2(\R^d)$ with domain
\begin{equation}\label{has}
\H^1_A(\R^d)=\{u\in L^2(\R^d)\mid\, (D_j-A_j)u\in L^2(\R^d),\,1\leq j\leq d\}.
\end{equation}
 In fact, adapting arguments from \cite{Ic2} (where the quantification $\mathfrak{Op}_A$ is used),
 one can show that $H_A\geq 0$.
 \end{remark}

 The next result has been proved in \cite{IMP1} for $B$ admitting a vector potential with bounded derivatives
and for the general case of hypothesis (i) in \cite{IMP3}.

\begin{proposition}\label{verif}
Let us consider verified the hypothesis of Proposition \ref{let}.

a) If $\lambda\in\R,\ \lambda <\inf\sigma (P)$, then $(P-\lambda)^{-1}$ is the closure in $\l$ of an operator
$\Op (p_{(\lambda)})$, with $p_{(\lambda)}\in S^{-m}(\RR) $. If in addition $\lambda\le {\rm inf}p-1$,
then a principal symbol of $\Op (p_{(\lambda)})$ is $(p-\lambda)^{-1}$.

b) If $m>0,\ p\ge 1$ and $P\ge 1$, then for every $s\in\R,\ P^s$ is the closure in $\l$ of a  operator $\Op (q_{(s
)}),\ q_{(s)}\in S^{sm}(\RR)$, which admits $p^s$ like principal symbol.
\end{proposition}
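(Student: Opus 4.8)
The plan is to transport the classical elliptic parametrix construction to the magnetic calculus of Section~\ref{secintro}. Beyond the excerpt I use: the asymptotic completeness of the product $\circ^B$ proved in \cite{IMP1} (a sequence $q_j\in S^{-m-j}(\R^d)$ sums to some $q\in S^{-m}(\R^d)$ with $q-\sum_{j<N}q_j\in S^{-m-N}(\R^d)$ for every $N$); interpolation and duality on the magnetic Sobolev scale; and the fact, established in \cite{IMP1,IMP3}, that the smoothing ideal $\Op(S^{-\infty}(\R^d))$ of the calculus admits a usable intrinsic description, into which the relevant remainders fall.

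\noindent\textbf{Part (a).} Fix $\lambda<\inf\sigma(P)$. Ellipticity together with $p\ge 0$ for $|\xi|\ge R$ gives $p(x,\xi)\ge c\langle\xi\rangle^m$ there, uniformly in $x$, so $p-\lambda\ge\tfrac12 c\langle\xi\rangle^m$ for $|\xi|\ge R_1$ with $R_1$ large. Choose $\chi\in C^\infty(\R^d)$ with $\chi=1$ on $\{|\xi|\le R_1\}$ and $\chi=0$ on $\{|\xi|\ge 2R_1\}$, so $\chi\in S^{-\infty}(\R^d)$; then $q_0:=(1-\chi)(p-\lambda)^{-1}\in S^{-m}(\R^d)$, and by Proposition~\ref{rima}(c) $q_0\circ^B(p-\lambda)=1+r_1$ with $r_1\in S^{-1}(\R^d)$ (since $q_0(p-\lambda)=1-\chi$ and $\chi\in S^{-\infty}(\R^d)$). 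Solving recursively $q_j:=-r_jq_0\in S^{-m-j}(\R^d)$ so that $(q_0+\dots+q_j)\circ^B(p-\lambda)=1+r_{j+1}$, $r_{j+1}\in S^{-j-1}(\R^d)$, and summing asymptotically yields a left parametrix $q\in S^{-m}(\R^d)$ with $\Op(q)\Op(p-\lambda)=I+\Op(r)$, $r\in S^{-\infty}(\R^d)$. Since $\Op(1)=I$ and $\S(\R^d)$ is a core for $P$, $\Op(p-\lambda)$ agrees with $P-\lambda$ on $\S(\R^d)$, hence on $\H^m_A(\R^d)=D(P)$ by Proposition~\ref{for} and density. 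Applied to $u\in D(P)$ with $\Op(p-\lambda)u\in\H^m_A(\R^d)$, the parametrix identity gives $u=\Op(q)\Op(p-\lambda)u-\Op(r)u\in\H^{2m}_A(\R^d)$; iterating, $D(P^n)=\H^{nm}_A(\R^d)$, whence by interpolation and duality ($P$ self-adjoint, $\lambda$ real) $(P-\lambda)^{-1}$ maps $\H^s_A(\R^d)$ into $\H^{s+m}_A(\R^d)$ for every $s\in\R$. Inserting $u=(P-\lambda)^{-1}v$ in the parametrix identity gives $(P-\lambda)^{-1}-\Op(q)=-\Op(r)(P-\lambda)^{-1}$; by $r\in S^{-\infty}(\R^d)$ and the mapping property just established this operator sends every $\H^s_A(\R^d)$ into all of them, and — using also the decay of the resolvent kernel below $\sigma(P)$ — it belongs to $\Op(S^{-\infty}(\R^d))$, say it equals $\Op(\rho)$ with $\rho\in S^{-\infty}(\R^d)$. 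Thus $(P-\lambda)^{-1}=\Op(p_{(\lambda)})$ with $p_{(\lambda)}:=q+\rho\in S^{-m}(\R^d)$. If in addition $\lambda\le\inf p-1$, then $p-\lambda\ge 1$ everywhere, no cutoff is needed, $q_0=(p-\lambda)^{-1}$ and $q-q_0\in S^{-m-1}(\R^d)$, so $(p-\lambda)^{-1}$ is a principal symbol of $(P-\lambda)^{-1}$.

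\noindent\textbf{Part (b).} Now assume $m>0$, $p\ge 1$ and $P\ge 1$. For $n\in\N$, Proposition~\ref{rima}(c) gives $P^n=\Op(p^{\circ^B n})$ with $p^{\circ^B n}\in S^{nm}(\R^d)$ and, by induction on $n$, principal symbol $p^n$. For $\sigma\in(0,1)$ use $P^{-\sigma}=\tfrac{\sin\pi\sigma}{\pi}\int_0^\infty\mu^{-\sigma}(P+\mu)^{-1}\,d\mu$: by part (a) with $\lambda=-\mu$ (legitimate since $p,P\ge 1$), $(P+\mu)^{-1}=\Op(p_{(-\mu)})$ with $p_{(-\mu)}\in S^{-m}(\R^d)$ of principal symbol $(p+\mu)^{-1}$; rerunning the parametrix with $\mu$ as a parameter — where one needs the seminorms of $p_{(-\mu)}$ controlled uniformly, with the natural gain as $\mu\to\infty$ — the $S^{-\sigma m}(\R^d)$-valued integral converges and $P^{-\sigma}=\Op(q_{(-\sigma)})$ with $q_{(-\sigma)}\in S^{-\sigma m}(\R^d)$ of principal symbol $\tfrac{\sin\pi\sigma}{\pi}\int_0^\infty\mu^{-\sigma}(p+\mu)^{-1}\,d\mu=p^{-\sigma}$. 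For arbitrary $s\in\R$ write $s=n-\sigma$ with $n\in\Z$, $\sigma\in[0,1)$: when $n\ge 0$ set $P^s=P^nP^{-\sigma}=\Op(p^{\circ^B n}\circ^B q_{(-\sigma)})$, and when $n<0$ replace $P^n$ by the $|n|$-th power of $P^{-1}=\Op(p_{(0)})$ from part (a). Either way $P^s=\Op(q_{(s)})$ with $q_{(s)}\in S^{(n-\sigma)m}(\R^d)=S^{sm}(\R^d)$ and principal symbol $p^np^{-\sigma}=p^s$.

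\noindent\textbf{Main obstacle.} The genuine difficulty — and the reason the statement is imported from \cite{IMP1,IMP3} rather than reproved — is not the formal recursion but upgrading it to the assertion that $(P-\lambda)^{-1}$ and $P^s$ are magnetic quantizations of honest symbols in the stated Hörmander classes. This requires the asymptotic completeness of $\circ^B$ and, above all, an effective intrinsic description of $\Op(S^{-\infty}(\R^d))$ (plus enough control on the resolvent kernel below $\sigma(P)$ to locate the remainder $\Op(r)(P-\lambda)^{-1}$ in it); for the fractional powers one also needs the uniform-in-$\mu$ symbol bounds making Balakrishnan's integral commute with $\Op$. Granting these inputs of \cite{IMP1,IMP3}, the rest is the standard elliptic parametrix argument transcribed to the $\circ^B$-calculus.
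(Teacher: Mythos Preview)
The paper does not supply its own proof of this proposition: immediately before the statement it says the result ``has been proved in \cite{IMP1} for $B$ admitting a vector potential with bounded derivatives and for the general case of hypothesis (i) in \cite{IMP3}.'' So there is nothing to compare against; you are in effect reconstructing what \cite{IMP1,IMP3} do. Your outline --- elliptic parametrix in the $\circ^B$-calculus for (a), Balakrishnan's formula plus composition for (b) --- is the expected strategy and is correctly sketched at the formal level. You also correctly flag the only substantive step: showing that the regularizing remainder $\Op(r)(P-\lambda)^{-1}$ (and the corresponding $\mu$-dependent remainders in (b)) are genuine magnetic pseudodifferential operators with symbols in $S^{-\infty}$, not merely operators smoothing in the $\H^s_A$-scale.

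One correction on that point: your parenthetical ``using also the decay of the resolvent kernel below $\sigma(P)$'' is not how this is actually achieved, and kernel decay alone would not suffice to place the remainder in $\Op(S^{-\infty}(\R^d))$. What \cite{IMP3} provides is a Beals-type criterion for the magnetic calculus --- an intrinsic characterization of $\Op(S^m(\R^d))$ via boundedness of iterated commutators with $x_j$ and $D_j-A_j$ on $L^2(\R^d)$ --- and it is this criterion, applied to $(P-\lambda)^{-1}$ directly (or to the remainder), that yields the conclusion. Once that tool is granted, your recursion, asymptotic summation, and the integer/fractional decomposition in (b) go through as written; the uniform-in-$\mu$ symbol estimates you mention are indeed the technical core of making the Balakrishnan integral converge in $S^{-\sigma m}(\R^d)$.
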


In the remaining part of this section we are going to prove two properties of commutators of magnetic pseudo-differential
operators, as well as applications to approximation by regularization or cut-off.

\begin{proposition}\label{indep}
Let $m\in\mathbb{R}$ or $m=-\infty$ and $g\in S^{m'}(\RR),\ m'\in\mathbb{R}$. Then

a) $f\circ^B g-g\circ^B f\in S^{m+m'-1}(\RR),\ \ \forall\,f\in S^m(\RR)$.

b) Let $\overline m\in\R$ and $M$ be a subset of $S^m(\RR)$ formed by symbols independents of $x\in\RR$, such that
the  set $\{\partial_{\xi_1}f,\dots,\partial_{\xi_d}f\mid\,f\in M\}$ is bounded in $S^{\overline m}(\RR)$. Then the
set $\{f\circ^B g- g\circ^B f\mid f\in M\}$ is bounded in $S^{\overline m+m'}(\RR)$.
\end{proposition}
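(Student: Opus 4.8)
The plan is to reduce everything to the known composition formula for the magnetic Moyal product $\circ^B$ and the symbol-class mapping properties already recorded in Proposition \ref{rima}(c) and Proposition \ref{for}. For part a), write $f\circ^B g - g\circ^B f$ using the oscillatory-integral expansion of $\circ^B$ from \cite{IMP1}. The leading term of $f\circ^B g$ is $fg$ and the leading term of $g\circ^B f$ is $gf=fg$, so these cancel; hence the commutator lies automatically in $S^{m+m'-1}(\RR)$. To make this precise one uses the asymptotic expansion $f\circ^B g \sim \sum_{k\ge 0} c_k(f,g)$ with $c_0(f,g)=fg$ and $c_k(f,g)\in S^{m+m'-k}(\RR)$ given by derivatives of $f$ and $g$ (and of the magnetic flux phase); the first-order term $c_1(f,g)-c_1(g,f)$ is, modulo $S^{m+m'-2}$, the usual Poisson-bracket-type expression plus a contribution of $B$, and in any case it sits in $S^{m+m'-1}(\RR)$. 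Thus $f\circ^B g - g\circ^B f = (c_0(f,g)-c_0(g,f)) + (\text{remainder in }S^{m+m'-1}) = 0 + S^{m+m'-1}(\RR)$, which is the claim. The case $m=-\infty$ is immediate since then $f\in S^{m''}$ for every $m''$, and one applies the finite-$m$ statement with $m''$ arbitrarily negative.

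For part b), the point is to track the constants in the expansion uniformly over $f\in M$. Since each $f\in M$ is independent of $x$, all terms $c_k(f,g)$ in the expansion that involve $\partial_x^\alpha f$ with $\alpha\ne 0$ vanish; every surviving term therefore contains at least one factor $\partial_{\xi_j}f$ for some $j$, because the $k=0$ terms $fg$ cancel in the commutator and every higher term in $c_k(f,g)-c_k(g,f)$ differentiates $f$ at least once, and in $\xi$ only. By hypothesis the family $\{\partial_{\xi_j}f : f\in M,\ 1\le j\le d\}$ is bounded in $S^{\overline m}(\RR)$; feeding a symbol bounded in $S^{\overline m}$ together with the fixed $g\in S^{m'}(\RR)$ into the (continuous, bilinear) maps defining the terms $c_k$ yields a family bounded in $S^{\overline m + m' - (k-1)}(\RR)\subseteq S^{\overline m + m'}(\RR)$ for $k\ge 1$. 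Summing the asymptotic series (controlling the remainder exactly as in the proof of Proposition \ref{rima}(c) in \cite{IMP1}, but now with the seminorms of $f$ replaced throughout by seminorms of its $\xi$-derivatives, which are uniformly bounded) gives that $\{f\circ^B g - g\circ^B f : f\in M\}$ is bounded in $S^{\overline m + m'}(\RR)$.

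The main obstacle is making the uniformity in part b) genuinely rigorous: one must re-examine the proof of the composition theorem in \cite{IMP1} and verify that every seminorm estimate on $f\circ^B g - g\circ^B f$ can be expressed in terms of seminorms of $\partial_\xi f$ (rather than of $f$ itself) and seminorms of $g$, including the estimate on the oscillatory-integral remainder term after the asymptotic expansion is truncated. This is where the hypothesis "$f$ independent of $x$" is essential — it is exactly what kills the zeroth-order term of the commutator and forces at least one $\xi$-derivative on $f$ in all remaining terms — and it is the step that requires care rather than the mere invocation of known results. Everything else is bookkeeping with symbol classes and the Fréchet-continuity of the bilinear operations $c_k$.
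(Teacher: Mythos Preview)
Your argument for (a) is correct and coincides with the paper's: Proposition~\ref{rima}(c) says $fg$ is a principal symbol of both $\Op(f\circ^B g)$ and $\Op(g\circ^B f)$, so the difference lies in $S^{m+m'-1}$.

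For (b) your plan is correct in spirit but takes a more abstract route than the paper. You propose to invoke the full asymptotic expansion $f\circ^B g\sim\sum_k c_k(f,g)$, use bilinear continuity of the $c_k$, and then control the truncation remainder by re-examining \cite{IMP1}. The paper instead avoids asymptotic expansions entirely: it inserts the one-step Leibniz--Newton identity
\[
f(\xi-\eta)=f(\xi)-\sum_{j=1}^d\eta_j\int_0^1(\partial_jf)(\xi-t\eta)\,dt
\]
directly into the oscillatory-integral composition formula, obtaining $f\circ^B g=fg+\rho_f$ with $\rho_f$ an \emph{explicit} oscillatory integral containing only $\partial_{\xi_j}f$. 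Then it integrates by parts (converting the factors $\eta_j$ into $\partial_{z_j}$ acting on the phase, and handling the resulting $y_k,z_k$ factors from the magnetic phase $\omega^B$ similarly) and reads off seminorm bounds on $\rho_f$ depending only on seminorms of $\partial_{\xi_j}f$ in $S^{\overline m}$ and of $g$ in $S^{m'}$. The same is done for $g\circ^B f=fg+\rho'_f$. What this buys is that there is a single remainder, already written in terms of first $\xi$-derivatives of $f$, so the uniformity over $M$ is immediate from one estimate; your route would work too (boundedness of $\{\partial_{\xi_j}f\}$ in $S^{\overline m}$ controls all $\partial_\xi^\alpha f$ with $|\alpha|\ge 1$), but you would still have to check that the oscillatory-integral remainder after truncation in \cite{IMP1} carries no undifferentiated copy of $f$---true, but something to be extracted rather than cited.
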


\begin{proof}
a) It follows from the Proposition 2.2 (c).

To verify b), one uses the composition formula from \cite{IMP1} for
$f\in M$, in which the integral is oscillatory:
\begin{equation}\label{2.3}
(f\circ^B g)(X)=\int_{\R^{2d}} \int_{\R^{2d}} \dbar Y\dbar Z\,e^{-2i[Y,Z]}\,\omega^B (x,y,z)\,f(\xi-\eta)\,g(X-Z),
\ \ \ X\in\R^{2d},
\end{equation}
where $X=(x,\xi),\ Y=(y,\eta),\ Z=(z,\zeta)$ are points in $\R^{2d}$, $\dbar Y=\pi^{-d}dY$,
$[Y,Z]=<\eta,z>-<\zeta,y>$ ($<\cdot,\cdot>$ is the scalar product in $\RR$) and $\omega^B (x,y,z)=e^{-4iF_B(x,y,z)}$,
where $F_B\in C^\infty (\R^{3d})$, depending only on the magnetic field $B$ and its first order derivatives, are of
the form:
\begin{equation}\label{2.4}
\underset{1\le j\le d}{\sum}\left[D_j(x,y,z)y_j+E_j(x,y,z)z_j\right],\ \ \ D_j,\,E_j\in BC^\infty (\R^{3d}).
\end{equation}
Using the Leibnitz-Newton formula
$$
  f(\xi-\eta)=f(\xi)-\underset{1\le j\le d}{\sum}\eta_j\int^1_0(\partial_j f)(\xi-t\eta)dt
$$
and the fact that $1\circ^B g=g$, one writes (\ref{2.3}) as
\begin{equation}\label{2.5}
(f\circ^B g)(X)=f(\xi)g(X)+\rho_f (X),
\end{equation}
where
\begin{equation}\label{2.6}
\rho_f (X)=-\underset{1\le j\le d}{\sum}\int^1_0dt\int_{\R^{2d}} \int_{\R^{2d}}\dbar Y\dbar Z\,\eta_j\,e^{-2i[Y,Z]}
\,\omega^B (x,y,z)\,(\partial_j f)\,(\xi-t\eta)g(X-Z).
\end{equation}
We use the identity
$$\eta_j e^{-2i[Y,Z]}=-\frac{1}{2i}\partial_{z_j}\left(e^{-2i[Y,Z]}\right)$$
to integrate by parts with respect to $z_j$.
We also use (\ref{2.4}) as well as
$$y_k e^{-2i[Y,Z]}=\frac{1}{2i}\partial_{\zeta_k}\left(e^{-2i[Y,Z]}\right),\ \ \
z_k e^{-2i[Y,Z]}=-\frac{1}{2i}\partial_{\eta_k}\left(e^{-2i[Y,Z]}\right)$$
to integrate by parts with respect to $\zeta_k$ and $\eta_k$. This gives
$$
\rho_f (X)=\underset{1\le j\le d}{\sum}\int^1_0 dt\int_{\R^{2d}}\int_{\R^{2d}} \dbar Y \dbar Z\,e^{-2i[Y,Z]}\,
\omega^B(x,y,z)\big[ \underset{1\le k\le d}{\sum}D_{jk}(x,y,z)(\partial_{\xi_k}g)(X-Z)
$$
$$
(\partial_j f)(\xi-\eta)+t\underset{1\le k\le d}{\sum}E_{jk}(x,y,z)(\partial_j\partial_k f)(\xi-t\eta)g(X-Z)-
$$
$$
-(\partial_j f)(\xi-t\eta)(\partial_{x_j}g)(X-Z) \big],\ \ \ \ D_{jk},\,E_{jk}\in BC^\infty (\R^{3d}).
$$
By hypothesis, the sets $\{\partial_j f\mid f\in M\},\ 1\le j\le d$ are bounded in $S^{\overline m}(\RR)$.

Using the standard integration by parts procedure  with respect to $y,z,\eta,\zeta$, starting from the equality
$$<\eta>^{2N}e^{-2i[Y,Z]}=\left(1-\frac{1}{4}\Delta_z\right)^Ne^{-2i[Y,Z]},\ \ N\in\N$$
and its analogs, by eliminating
the monomials in $y$ and $z$, as above, one obtains the estimation
$$
  \left|\rho_f (X)\right|\le p(f)\,q(g)\int^1_0 dt\int_{\R^{2d}}\int_{\R^{2d}}\dbar Y\dbar Z <z>^{-2N_\eta}<\zeta>^{-2N_y}
  <\eta>^{-2N_z}<y>^{-2N_\zeta}\cdot
$$
$$
  \cdot<\xi-t\eta>^{\overline m}<\xi-\zeta>^{m'},
$$
where $N_\eta,\,N_y,\,N_z,\,N_\zeta$ are natural integers which must be chosen in a suitable way in order to have absolute convergence of the integrals, $p(f)=\underset{1\le j\le d}
{\sum}p_j(\partial_j f)$, $p_j$ is a continuous semi-norm on $S^{\overline m}(\RR)$ and $q$ a continuous semi-norm on
$S^{m'}(\RR)$.
Since
$$
  <\xi-t\eta>^{\overline m}\le C<\xi>^{\overline m}<\eta>^{|\overline m|},\ \ \ <\xi-\zeta>^{m'}\le C<\xi>^{m'}
  <\zeta>^{|m'|},\quad C\in\mathbb{R}_+,
$$
one can choose $N_\eta=N_\zeta=d,\,\ N_y=d+|m'|,\,\ N_z=d+|\overline m|$ and get
  $$
    \left|\rho_f (X)\right|\le C_0\,p(f)\,q(g)<\xi>^{\overline m+m'} ,\ \ \ \ C_0>0\ {\rm constant}.
  $$
Analogously one estimates the derivatives of $\rho$ and obtains that the set $\{\rho_f\mid\,f\in M\}$ is bounded in
$S^{\overline m+m'}(\RR)$.

In the same way one can show that $g\circ^B f=fg+\rho'_f$ and $\{\rho'_f;\,f\in M\}$
is bounded in $S^{\overline m+m'}(\RR)$.
\end{proof}

\begin{definition}\label{magn}
Let $u\in\S'(\RR)$, $f\in \S(\RR)$. One calls {\rm magnetic convolution of $u$ with} $f$, the function $u\star^A f
\in C^\infty (\RR)$ defined by
\begin{equation}\label{2.7}
(u\star^A f)(x):=<u(y), \,e^{i<x-y,\Gamma^A (x,y)>}f(x-y)>,\ \ \ x\in\RR,
\end{equation}
where $<\cdot,\cdot>$ is the duality bracket between $\S(\RR)$ and $\S'(\RR)$ .
\end{definition}

\begin{remark} Using the equality $f(x-y)=\int_{\RR}e^{i<x-y,\xi>}\widehat f(\xi)d\xi$, one finds out  that
$u\star^A f=\Op (\widehat f)u$.
\end{remark}

To regularize a distribution by means of the magnetic convolution, one uses a standard $\delta$-sequence.
Let us consider a function $\theta\in\c,\ \theta\ge 0$, ${\rm supp}\,\theta\subset B(0;1),\ \ \int_{\RR}
\theta (x)dx=1$. For $j\ge 1$ one defines $\theta_j(x):=j^d\theta(jx),\ x\in\RR$.
Then $\theta_j\in\c,\ {\rm supp}\,\theta_j
\subset B(0;1/j),\ \widehat\theta_j\in\S(\RR),\ \widehat\theta_j (\xi)=\widehat \theta(j^{-1}\xi),\ \xi\in\RR$.
For $u\in\S'(\RR)$ we set $R_ju:=u\star^A\theta_j$.

\begin{proposition}\label{if}
(a) If $u\in\mathcal E'(\RR),\ R_ju\in\c$ and
${\rm supp}R_ju\subset \{x\in\RR\mid\,{\rm dist}(x,{\rm supp}\,u)\le 1/j\}.$

(b) If $u\in L^\infty (\RR),\ R_ju\in L^\infty (\RR)$ and $\|R_ju\|_{L^\infty (\RR)}\le \|u\|_{L^\infty (\RR)}$.

(c) If $u\in\l,\ R_ju\in\l$ and $\underset{j\to\infty}{\lim}R_ju=u$ on $\l$.

(d) Let $P=\Op (p),\ p\in S^{1/2}(\RR)$. If $u\in\l$ and $Pu\in\l$, then
$\underset{j\to\infty}{\lim}PR_ju=Pu$ in $\l$.
\end{proposition}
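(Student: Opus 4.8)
The plan is to treat the four items in order, each reducing to the identification $u \star^A f = \Op(\widehat f)u$ recorded in the Remark after Definition \ref{magn}, together with the mapping and composition properties already established for the magnetic calculus. For (a), I would use that $R_j u = u \star^A \theta_j$ is a pairing of $u \in \mathcal E'(\RR)$ against the smooth compactly varying family $y\mapsto e^{i\langle x-y,\Gamma^A(x,y)\rangle}\theta_j(x-y)$, which is $C^\infty$ in $x$; smoothness of $R_j u$ follows by differentiating under the (compactly supported) bracket, and the support statement is immediate from $\mathrm{supp}\,\theta_j \subset B(0;1/j)$: the integrand vanishes unless $x-y \in B(0;1/j)$ with $y$ in a neighborhood of $\mathrm{supp}\,u$. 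For (b), I would write $R_j u$ for $u\in L^\infty$ as $(R_j u)(x)=\int u(y)\,e^{i\langle x-y,\Gamma^A(x,y)\rangle}\theta_j(x-y)\,dy$, note the phase has modulus one and $\theta_j \ge 0$ with $\int\theta_j = 1$, so $|(R_j u)(x)| \le \|u\|_\infty \int \theta_j = \|u\|_\infty$.

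For (c), the operator $R_j = \Op(\widehat{\theta_j})$ has symbol $\widehat{\theta_j}(\xi) = \widehat\theta(j^{-1}\xi)$, which is bounded in $S^0(\RR)$ uniformly in $j$ (derivatives in $\xi$ gain negative powers of $j$); by Proposition \ref{aoa} the $R_j$ are uniformly bounded on $\l$. Since $\widehat\theta(0)=\int\theta=1$, one has $R_j u \to u$ on a dense set — e.g.\ for $u\in\S(\RR)$, using that $\widehat\theta(j^{-1}\xi)\to 1$ pointwise and dominated convergence for the oscillatory integral representation — and the uniform bound upgrades this to all of $\l$ by a standard $3\epsilon$ argument. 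For (d), I would commute $P$ past $R_j$: by Proposition \ref{indep}(a) (or directly Proposition \ref{rima}(c)) the symbols satisfy $p \circ^B \widehat{\theta_j} = \widehat{\theta_j}\, p + (\text{remainder})$, and since the family $\{\widehat{\theta_j}\}$ is bounded in $S^0$ with $\{\partial_{\xi_k}\widehat{\theta_j}\}$ bounded in $S^{-1}$ (indeed $O(j^{-1})$ in $S^0$), Proposition \ref{indep}(b) gives that the commutator symbols $p\circ^B\widehat{\theta_j} - \widehat{\theta_j}\circ^B p$ form a bounded — in fact $j^{-1}$-small — family in $S^{-1/2}(\RR) \subset S^0(\RR)$. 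Hence $[P,R_j] \to 0$ in $B(\l)$, and so $P R_j u = R_j (Pu) + [P,R_j]u \to Pu + 0 = Pu$ by part (c) applied to $Pu \in \l$.

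The one point requiring genuine care is (d): I must check that conjugating $P$ through $R_j$ is legitimate as an operator identity on the relevant domains — a priori $P$ is defined via the closure from $\S(\RR)$, so I would first verify the commutation on $\S(\RR)$ using Proposition \ref{rima}, then extend: for $u\in\l$ with $Pu\in\l$ one has $u\in\H^{1/2}_A(\RR)$ by definition of the magnetic Sobolev space, $R_j u \in \H^{1/2}_A(\RR)$ with $R_j u \to u$ there (again by the uniform $S^0$-bound and density, via Proposition \ref{for}), so $PR_j u = \Op(\widehat{\theta_j}\circ^B p + \text{rem})u$ makes sense and the estimate above applies. The main obstacle is thus bookkeeping the uniformity of the symbol seminorms of $\widehat{\theta_j}$ and of the resulting commutator remainders in $j$; everything else is a direct application of the calculus recalled in Section \ref{secintro}.
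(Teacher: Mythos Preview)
Your plan for (a), (b) and (d) follows the paper's route; for (c) the paper gives a direct estimate (write $(R_ju)(x)=\int\theta(z)\,u(x-z/j)\,e^{-(i/j)\langle z,\Gamma^A(x,x-z/j)\rangle}dz$, subtract $u(x)=\int\theta(z)u(x)\,dz$, and use continuity of translation in $L^2$ together with dominated convergence) rather than your density-plus-uniform-bound argument, but both are standard and valid.

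There is, however, a genuine slip in your execution of (d). From ``$\{\partial_{\xi_k}\widehat\theta_j\}$ bounded in $S^{-1}$'' Proposition~\ref{indep}(b) with $\overline m=-1$, $m'=1/2$ yields only that the commutator symbols are \emph{bounded} in $S^{-1/2}$, not small; and from ``$O(j^{-1})$ in $S^0$'' (i.e.\ $\overline m=0$) you would land in $S^{1/2}$, which gives no $L^2$-bound at all. Neither produces your claimed ``$j^{-1}$-small in $S^{-1/2}$'', and that claim is in fact false: $\{j\,\partial_{\xi_k}\widehat\theta_j\}=\{(\partial_k\widehat\theta)(\cdot/j)\}$ is \emph{not} bounded in $S^{-1}$ (evaluate the seminorm $\langle\xi\rangle\,|(\partial_k\widehat\theta)(\xi/j)|$ at $|\xi|\sim j$). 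The fix is the paper's choice of scaling: one checks that $\{j^{1/2}\partial_{\xi_k}\widehat\theta_j\}$ is bounded in $S^{-1/2}$, so applying Proposition~\ref{indep}(b) to $M=\{j^{1/2}\widehat\theta_j\}$ with $\overline m=-1/2$ gives $\{j^{1/2}(\widehat\theta_j\circ^B p-p\circ^B\widehat\theta_j)\}$ bounded in $S^0$, hence $\|[P,R_j]\|_{B(L^2)}\le Cj^{-1/2}\to 0$. With this correction your argument for (d) goes through exactly as written.
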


\begin{proof}
Properties (a) and (b) are evident.

(c) Since
$\widehat \theta\in\S(\RR)\subset S^{-\infty}(\RR)\subset S^0 (\RR)$
and $R_j=\Op (\widehat\theta_j)$, Proposition 2.3 shows that $R_ju\in\l$. But
$$
  \left(R_j\right)u(x)=\int_{\RR}\theta (z)\,u\left(x-z/j\right)\,e^{-(i/j)<z,\Gamma^A(x,x-z/j)>}dz
$$
and
$
  u(x)=\int_{\RR}\theta (z)\,u(x)\,dz.
$
Using twice the Dominated Convergence Theorem and the continuity of $u$ in mean, one gets
$$
\|R_ju-u\|_{\l}\le\int_{\RR}\theta (z)\big[ \|u(\cdot-z/j)-u(\cdot)\|_{\l}+
$$
$$
 + \|u(\cdot)\left(e^{-(i/j)<z,
\Gamma^A(\cdot,\cdot-z/j)>}-1\right)\|_{\l}\big]dz\underset{j\to\infty}{\to}0.
$$

(d) The sequence $\{j^{1/2}\partial_k\widehat\theta_j\}_{j\ge 1}$ is bounded  in $S^{-1/2}(\RR)$ for $1\le k\le d$.
One applies Proposition 2.10 (b) with
$M=\{j^{1/2}\widehat\theta_j;\,j\ge 1\}$,$\ m=-\infty,\ \overline m=-\frac{1}{2}$,
$m'=\frac{1}{2}$, $f(\xi)=j^{1/2}\widehat\theta_j (\xi)$, $g=p$
and deduces that the set
$$\{j^{1/2}(\widehat\theta_j
\circ^B p-p\circ^B\widehat\theta_j)\mid\,j\ge 1\}$$
is bounded in $S^0(\RR)$. By Proposition 2.3, there is a constant
$C>0$ such that
$$\|R_jP-PR_j\|_{B(\l)}\le Cj^{-1/2},\ j\ge 1.$$
Thus
$\underset{j\to\infty}{\lim}\big(PR_ju-R_jPu\big)=0$ in $\l$ and
$\underset{j\to\infty}{\lim}R_jPu=Pu$ in $\l$ implies the conclusion.
\end{proof}

\begin{proposition}\label{const}
Let $P=\Op (p),\ p\in S^m(\RR),\ m\le 1$ and $\varphi\in BC^\infty (\RR)$. Let us suppose that
there exists $N\in\N,\ N>m+d-1$ such that $|M_N|<\infty$, where
$$M_N:=\underset{|\alpha|=N+1}{\cup}{\rm supp}\,\partial^{\alpha}\varphi.$$
Then there is a constant $C>0$ independent on $\varphi$, operators
$S_\alpha,S'_\alpha\in B(\l),\,1\le |\alpha|\le N$, independent
on $\varphi$ and operators $T_N,T'_N\in\Y_2$ (the Hilbert-Schmidt space on $\l$) such that
 \begin{equation}\label{2.8}
 \|T_N\|_{\Y_2}+\|T'_N\|_{\Y_2}\le C \underset{|\alpha|=N+1}{\rm max}\|\partial^\alpha\varphi\|_{L^\infty(\RR)}|M_N|^{1/2}
 \end{equation}
 and
 \begin{equation}\label{2.9}
 [\varphi,P]:=\varphi P-P\varphi=\underset{1\le |\alpha|\le N}{\sum}(\partial^\alpha\varphi)S_\alpha+T_N=
 \underset{1\le |\alpha|\le N}{\sum}S'_\alpha(\partial^\alpha\varphi)+T'_N,
 \end{equation}
 with the convention that the sums in (\ref{2.9}) do not exist if $N=0$.
  \end{proposition}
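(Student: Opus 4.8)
I would run the whole argument at the level of distribution kernels. Since $\varphi\in BC^\infty(\R^d)$, both multiplication by $\varphi$ and $P=\Op(p)$ map $\S(\R^d)$ into itself, so $[\varphi,P]=\varphi P-P\varphi$ is well defined there, and its kernel is $(\varphi(x)-\varphi(y))\,K_P(x,y)$, where
\begin{equation*}
K_P(x,y)=\int_{\R^d}\dbar\xi\;e^{i\langle x-y,\,\xi+\Gamma^A(x,y)\rangle}\,p\bigl(\tfrac{x+y}{2},\xi\bigr).
\end{equation*}
The basic observation is the identity, valid for every $\gamma\in\N^d$,
\begin{equation*}
(x-y)^\gamma K_P(x,y)=K_{\Op((i\partial_\xi)^\gamma p)}(x,y),
\end{equation*}
obtained by writing $(x-y)^\gamma e^{i\langle x-y,\xi\rangle}=(-i\partial_\xi)^\gamma e^{i\langle x-y,\xi\rangle}$ and integrating by parts in $\xi$ (the factor $e^{i\langle x-y,\Gamma^A(x,y)\rangle}$ does not depend on $\xi$); note $(i\partial_\xi)^\gamma p\in S^{m-|\gamma|}(\R^d)$.

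\textbf{Extracting the finite part.} Next I would insert the Taylor expansion of $\varphi$ around $x$ to order $N$, with integral remainder,
\begin{equation*}
\varphi(y)-\varphi(x)=\sum_{1\le|\alpha|\le N}\frac{(y-x)^\alpha}{\alpha!}(\partial^\alpha\varphi)(x)+(N+1)\!\!\sum_{|\beta|=N+1}\frac{(y-x)^\beta}{\beta!}\int_0^1(1-t)^N(\partial^\beta\varphi)(x+t(y-x))\,dt,
\end{equation*}
and combine it with the identity above. The finite sum yields exactly $\sum_{1\le|\alpha|\le N}(\partial^\alpha\varphi)\,S_\alpha$ with $S_\alpha:=\tfrac{(-1)^{|\alpha|+1}}{\alpha!}\Op\bigl((i\partial_\xi)^\alpha p\bigr)$; since $m\le1$ and $|\alpha|\ge1$ we have $(i\partial_\xi)^\alpha p\in S^{m-|\alpha|}(\R^d)\subseteq S^0(\R^d)$, so $S_\alpha\in B(\l)$ by Proposition \ref{aoa}, with norm bounded by a seminorm of $p$, hence independent of $\varphi$. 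Running the same expansion around $y$ produces the second representation, with $S'_\alpha:=\tfrac1{\alpha!}\Op\bigl((i\partial_\xi)^\alpha p\bigr)$ multiplying $\partial^\alpha\varphi$ on the right and a corresponding remainder $T'_N$. The remainder $T_N$ has kernel equal, up to the sign $(-1)^N$, to
\begin{equation*}
K_{T_N}(x,y)=\sum_{|\beta|=N+1}\frac{N+1}{\beta!}\int_0^1(1-t)^N(\partial^\beta\varphi)(x+t(y-x))\,K_{\Op((i\partial_\xi)^\beta p)}(x,y)\,dt .
\end{equation*}

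\textbf{The Hilbert--Schmidt estimate.} Here the hypothesis $N>m+d-1$ enters and the real work lies. The symbol $q_\beta:=(i\partial_\xi)^\beta p$ lies in $S^{m-N-1}(\R^d)$ with $m-N-1<-d$, so $\int_{\R^d}\langle\xi\rangle^{m-N-1}d\xi<\infty$; integrating by parts $N'$ times in $\xi$ (with $2N'\ge d+1$), via $\langle x-y\rangle^{-2N'}(1-\Delta_\xi)^{N'}e^{i\langle x-y,\xi\rangle}=e^{i\langle x-y,\xi\rangle}$, gives the pointwise bound $|K_{\Op(q_\beta)}(x,y)|\le C_p\,\langle x-y\rangle^{-(d+1)}$ with $C_p$ depending only on a seminorm of $p$. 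Since $|(\partial^\beta\varphi)(x+t(y-x))|\le\max_{|\alpha|=N+1}\|\partial^\alpha\varphi\|_{L^\infty(\R^d)}=:A$, and this factor vanishes unless $(1-t)x+ty\in M_N$, two applications of Cauchy--Schwarz (in the finitely many $\beta$ and in $t\in[0,1]$) give
\begin{equation*}
|K_{T_N}(x,y)|^2\le C\,A^2\,\langle x-y\rangle^{-2d-2}\sum_{|\beta|=N+1}\int_0^1(1-t)^{2N}\,\mathbf{1}_{M_N}\bigl((1-t)x+ty\bigr)\,dt .
\end{equation*}
For fixed $t$ I would then change variables by the unimodular map $(x,y)\mapsto\bigl((1-t)x+ty,\;x-y\bigr)$, under which $\iint\langle x-y\rangle^{-2d-2}\mathbf{1}_{M_N}((1-t)x+ty)\,dx\,dy=|M_N|\int_{\R^d}\langle v\rangle^{-2d-2}dv$. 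Integrating in $t$ yields $\|T_N\|_{\Y_2}\le C'\,A\,|M_N|^{1/2}$ with $C'$ built only from seminorms of $p$, from $d$ and from $N$; the same bound holds for $T'_N$, which gives (\ref{2.8}). When $N=0$ (which forces $m<1-d$) there are no terms $S_\alpha,S'_\alpha$ and the statement reduces to this kernel estimate for $T_0=T'_0=[\varphi,P]$.

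\textbf{Main obstacle.} The delicate point is precisely the last estimate: one has to check that the only $\varphi$-dependence surviving in $T_N$ is through the top-order derivatives $\partial^\beta\varphi$, $|\beta|=N+1$, and that the finite-measure localisation carried by $\mathbf{1}_{M_N}$ is not lost under the $t$-integration or under composition with the (non compactly supported) kernel $K_{\Op(q_\beta)}$ --- the change of variables $(x,y)\mapsto((1-t)x+ty,\,x-y)$ is designed exactly to convert that localisation into the factor $|M_N|^{1/2}$. A minor technical remark: the Taylor expansion and the $\xi$-integrations by parts are a priori identities between oscillatory integrals, but once $(y-x)^\beta$ with $|\beta|=N+1$ is paired with $q_\beta\in S^{m-N-1}\subset S^{-d}$ all kernels involved are genuine continuous, decaying functions, so no regularisation subtlety remains. (Alternatively the finite part could be extracted from the composition formula (\ref{2.3}) and Proposition \ref{indep}, but the kernel computation above is more direct.)
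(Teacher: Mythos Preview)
Your proof is correct and essentially identical to the paper's: both Taylor-expand $\varphi(x)-\varphi(y)$ to order $N$, identify $S_\alpha=-\tfrac{1}{\alpha!}\Op(D_\xi^\alpha p)$ (your $\tfrac{(-1)^{|\alpha|+1}}{\alpha!}\Op((i\partial_\xi)^\alpha p)$ is the same operator), bound the remainder kernel by $C\langle x-y\rangle^{-2s}\boldsymbol{1}_{M_N}((1-t)x+ty)$ via $\xi$-integration by parts, and then use the unimodular change of variables $(x,y)\mapsto((1-t)x+ty,\,x-y)$ to extract $|M_N|^{1/2}$. The only cosmetic differences are your use of Cauchy--Schwarz in $t$ versus the paper's $\sup_{t\in[0,1]}$, and your explicit mention of the $N=0$ case.
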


 \begin{proof}
 Using (1.2) and Taylor's formula
 $$
   \varphi (x)-\varphi (y)=-\underset{1\le |\alpha|\le N}{\sum}\frac{(y-x)^\alpha}{\alpha!}(\partial^\alpha\varphi)(x)+
   r_N(x,y),
 $$
 where
 $$
   r_N(x,y)=-\underset{|\alpha|=N+1}{\sum}\frac{(y-x)^\alpha}{N!}\int^1_0 (1-t)^N(\partial^\alpha\varphi)(x+t(y-x))dt,
 $$
 one obtains the first equality from (\ref{2.9}) with $S_\alpha=-\frac{1}{\alpha!}\Op (D^\alpha_\xi p)$ and
\begin{equation}\label{2.10}
   T_Nu(x):=-\frac{1}{N!}\underset{|\alpha|=N+1}{\sum}\int^1_0 dt(1-t)^N \int_{\RR}dy\dbar\xi\,e^{i<x-y,\xi+\Gamma^A
   (x,y)>}\times
 \end{equation}
 $$
   \times(D^\alpha_\xi p)\left(\frac{x+y}{2},\xi\right)(\partial^\alpha\varphi)(x+t(y-x))u(y),\ \ \ u\in\S(\RR).
 $$
 Since for $|\alpha|\ge 1$ one has $D^\alpha_\xi p\in S^0(\RR)$, by Proposition 2.3 one has $S_\alpha\in B(\l)$.
 But, using in (\ref{2.10}) the identity
 $<x-y>^{2s}e^{i<x-y,\xi>}=(1-\Delta_\xi)^s(e^{i<x-y,\xi>})$, for $s\in\N$,
 to integrate by parts, one sees that for any $s\in\N$, $T_N$ can be written as an integral operator with kernel
 $$
   K_N(x,y):=-\frac{1}{N!}\underset{|\alpha|=N+1}{\sum}\int^1_0 dt(1-t)^N \int_{\RR}\dbar \xi\,e^{i<x-y,\xi+\Gamma^
   A(x,y)>}\times
 $$
 $$
   \times <x-y>^{-2s}\left[(1-\Delta_\xi)^sD^\alpha_\xi p\right]\left(\frac{x+y}{2},\xi\right)(\partial^\alpha\varphi)(x+
   t(y-x)).
 $$
 Denoting by $\boldsymbol{1}_M$ the characteristic function of a set $M\subset\RR$, one obtains for each $s\in\N$
 $$
   |K_N(x,y)|\le C_s<x-y>^{-2s}\underset{|\alpha|=N+1}{\rm max}\|\partial^\alpha\varphi\|_{L^\infty(\RR)}
   \int^1_0 \boldsymbol{1}_{M_N}(x+t(y-x))dt,
 $$
 with $C_s$ a constant  depending  on $s$ but not on $\varphi$. Choosing $s>d/4$ one gets
 $$
   \|T_N\|_{\Y_2}=\|K_N\|_{L^2(\RR\times\RR)}\le
$$
$$
\leq C_s\underset{|\alpha|=N+1}{\rm max}\|\partial^\alpha\varphi\|
   _{L^\infty(\RR)}\ \underset{t\in [0,1]}{\rm sup}\left[\int_{\mathbb{R}^{2d}}\hspace{-0.3cm}<x-y>^{-4s}\boldsymbol{1}_{M_N}(x+t(y-x))\,dx\,dy\right]^{1/2}.
 $$
 The integral in the previous formula equals
 $$
   \int_{\mathbb{R}^{2d}}<z>^{-4s}\boldsymbol{1}_{M_N}(x+tz)dxdz=\int_{\RR}\boldsymbol{1}_{M_N}(y)dy\int_{\RR}<z>^{-4s}dz=
   |M_N|\int_{\RR}<z>^{-4s}dz.
 $$
It follows that the norm $\|T_N\|_{\Y_2}$ is bounded with the right member of the inequality (\ref{2.8}).

In the same
way one gets the second equality from (\ref{2.9}) and the corresponding bound for $\|T'_N\|_{\Y_2}$.
 \end{proof}

A first application of the Proposition 2.14 concerns cut-off approximations.
Let
$\psi\in\c$, $0\le\psi\le 1$, $\text{\rm supp}\,\psi\subset B(0;2)$, $\psi=1$ on $B(0;1)$.
For $j\ge 1$ one sets $\psi_j(x):=\psi(x/j),\ x\in\RR$.

\begin{proposition}\label{lim}
Let $u\in\l$ and  $P=\Op (p),\ p\in S^m(\RR),\ m\le 1$.

a) $\psi_ju\in\l$ and $\underset{j\to\infty}{\lim}\psi_ju=u$ in $\l$.

b) If $Pu\in\l$, then $P(\psi_ju)\in\l$ and $\underset{j\to\infty}{\lim}P(\psi_ju)=Pu$ in $\l$.
\end{proposition}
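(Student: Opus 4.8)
The plan is to prove part (a) by a direct computation and part (b) by combining part (a) with a commutator estimate from Proposition \ref{const}, exactly in the spirit of the proof of Proposition \ref{if}(d).

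\textbf{Part (a).} First I would note that multiplication by $\psi_j$ is bounded on $L^2(\R^d)$ with norm $\le 1$, so $\psi_j u\in L^2(\R^d)$ is immediate. For the convergence, I would write
$$\|\psi_j u-u\|^2_{L^2(\R^d)}=\int_{\R^d}|\psi_j(x)-1|^2|u(x)|^2\,dx.$$
The integrand is dominated by $|u(x)|^2\in L^1(\R^d)$ (since $0\le\psi\le1$ gives $|\psi_j-1|\le1$), and for each fixed $x$ one has $\psi_j(x)=\psi(x/j)\to\psi(0)=1$ as $j\to\infty$ (indeed $\psi_j(x)=1$ once $j>|x|$). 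The Dominated Convergence Theorem then gives $\|\psi_j u-u\|_{L^2(\R^d)}\to0$.

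\textbf{Part (b).} Here I would imitate the argument for Proposition \ref{if}(d). Write
$$P(\psi_j u)=\psi_j(Pu)+[P,\psi_j]u,$$
so it suffices to show $\psi_j(Pu)\to Pu$ in $L^2(\R^d)$, which is part (a) applied to $Pu\in L^2(\R^d)$, and that $[P,\psi_j]u\to0$ in $L^2(\R^d)$. For the commutator term I would apply Proposition \ref{const} with $\varphi=\psi_j$: choosing any fixed $N\in\N$ with $N>m+d-1$, we get
$$[\psi_j,P]=\sum_{1\le|\alpha|\le N}(\partial^\alpha\psi_j)S_\alpha+T_{N,j},$$
where the $S_\alpha\in B(L^2(\R^d))$ are independent of $j$ and $T_{N,j}\in\Y_2$. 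Now $\partial^\alpha\psi_j(x)=j^{-|\alpha|}(\partial^\alpha\psi)(x/j)$, so $\|\partial^\alpha\psi_j\|_{L^\infty(\R^d)}\le j^{-|\alpha|}\|\partial^\alpha\psi\|_{L^\infty(\R^d)}\to0$ for $|\alpha|\ge1$; hence each term $(\partial^\alpha\psi_j)S_\alpha\to0$ in $B(L^2(\R^d))$, so applied to $u$ it tends to $0$ in $L^2(\R^d)$. For $T_{N,j}$ I would use estimate (\ref{2.8}): the set $M_{N,j}=\bigcup_{|\alpha|=N+1}\operatorname{supp}\partial^\alpha\psi_j$ is contained in the annulus $\{j\le|x|\le 2j\}$, so $|M_{N,j}|\le C j^d$, while $\max_{|\alpha|=N+1}\|\partial^\alpha\psi_j\|_{L^\infty(\R^d)}\le j^{-(N+1)}C'$; therefore
$$\|T_{N,j}\|_{\Y_2}\le C\,j^{-(N+1)}C'\,(Cj^d)^{1/2}=C''\,j^{d/2-N-1}.$$
Since $N>m+d-1\ge d-1$ (as $m\le1$... in fact we only need $N>d/2-1$, which the hypothesis $N>m+d-1$ certainly gives when $m\ge -d/2+1$, and more carefully $d/2-N-1<d/2-(d/2-1)-1=-... $), I should just pick $N$ large enough that $d/2-N-1<0$, which is allowed since the only constraint is $N>m+d-1$ and larger $N$ is fine; then $\|T_{N,j}\|_{\Y_2}\to0$, so $\|T_{N,j}u\|_{L^2(\R^d)}\le\|T_{N,j}\|_{\Y_2}\|u\|_{L^2(\R^d)}\to0$. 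Combining, $[P,\psi_j]u\to0$ and hence $P(\psi_j u)=\psi_j(Pu)+[P,\psi_j]u\to Pu$ in $L^2(\R^d)$; in particular $P(\psi_j u)\in L^2(\R^d)$ for each $j$.

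\textbf{Main obstacle.} The computation is routine; the only point requiring a little care is bookkeeping the decay: one must check that the support of $\partial^\alpha\psi_j$ lives in an annulus of radius $\sim j$ (so its volume grows like $j^d$) and that this growth is beaten by the factor $j^{-(N+1)}$ from the sup-norm of the $(N+1)$-st derivatives, which forces choosing $N$ large (larger than $d/2-1$, automatically permitted by the hypothesis $N>m+d-1$ upon taking $N$ as large as we like). The structural part—splitting off the commutator and reducing to part (a)—is exactly parallel to Proposition \ref{if}(d).
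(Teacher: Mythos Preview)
Your proposal is correct and follows essentially the same route as the paper: part (a) by dominated convergence, part (b) by writing $P(\psi_j u)=\psi_j(Pu)+[P,\psi_j]u$ and controlling the commutator via Proposition~\ref{const}. The paper is simply more economical---it fixes $N=d+1$ from the outset (which satisfies $N>m+d-1$ since $m\le 1$, and also $N>d/2-1$), obtaining directly $\|[\psi_j,P]\|_{B(L^2)}\le Cj^{-1}$, whereas you spend some lines negotiating the choice of $N$; but the argument is the same.
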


\begin{proof}
a) is trivial.

b) follows if for some constant $C>0$ one obtains the inequality
\begin{equation}\label{2.11}
             \|[\psi_j, P]\|_{B(\l)}\le Cj^{-1},\ \ \ \ j\le 1.
\end{equation}
For this one applies Proposition 2.14 with $\varphi=\psi_j$ and $N=d+1$.
Since
${\rm supp}\,\partial^\alpha\psi_j\subset B(0;2j)$, $\forall\,\alpha\in\N^d$,
there is a constant $C_1>0$ such that
$|M^{(j)}_N|^{1/2}\le C_1\,j^{d/2}$, $\forall\,j\ge 1,$
where
$M^{(j)}_N:=\underset{|\alpha|=N+1}{\cup}{\rm supp}\,\partial^\alpha\psi_j.$
On the other hand $(\partial^\alpha\psi_j)(x)=j^{-|\alpha|}(\partial^\alpha\psi)(x/j)$, hence there
is  a constant $C_2>0$ such that
$\|\partial^\alpha\psi_j\|_{L^\infty (\RR)}\le C_2\,j^{-|\alpha|}$,
for any $j\ge 1$ and for all $\alpha\in\N^d$ with $1\le |\alpha|\le N+1$.
Then (\ref{2.11}) follows from (\ref{2.8}) and (\ref{2.9}).
\end{proof}

\section{The Feller semigroup}\label{mirabilis}

In this section we are going to recall some well-known properties of the semi-group $\left\{e^{-tH_0}\right\}_{t\ge 0}$,
where $H_0$ is the free relativistic Hamiltonian for $A=0$. $H_0$ is self-adjoint
on $\l,\ H_0\ge 0$ and its domain is the standard Sobolev space $\H^1(\RR)\equiv\H^1_0 (\R^d)$. Its restriction to
$\S(\RR)$ is the operator
$\mathfrak{Op}^0 (p)$, with $p(X)=<\xi>-1$.
By the L\'evy-Khincin formula (see for example \cite{RS}), there exists a measure $n$ on $\RR$ such that
\begin{equation}\label{3.1}
<\xi>-1=-\int_{\RR}\left[e^{i<y,\xi>}-1-i<y,\xi>\boldsymbol{1}_{B(0;1)}(y)\right]dn(y),\ \ \ \forall\,\xi\in\RR.
\end{equation}
Cf. \cite{Ic2} one has the explicit formula
\begin{equation}\label{3.2}
dn(y)=2(2\pi)^{-\frac{d+1}{2}}|y|^{-\frac{d+1}{2}}K_{\frac{d+1}{2}}(|y|)dy,
\end{equation}
where $K_\nu$ is the modified Bessel function of the third kind and order $\nu$, for which the next
inequalities are verified for some positive constant $C$:
\begin{equation}\label{3.3}
0< K_\nu (r)\le C\max \left(r^{-\nu},r^{-1/2}\right)\,e^{-r},\ \ \ \ \forall\,r>0,\ \forall\,\nu>0.
\end{equation}

By \cite{IT2,CMS}, for $t>0$, the operator $e^{-tH_0}$ is given by the convolution with the function
\begin{equation}\label{3.4}
p_t(x):=(2\pi)^{-d}\frac{t}{\sqrt{|x|^2+t^2}}\int_{\RR} e^{t-<\xi>\sqrt{|x^2|+t^2}}d\xi=
\end{equation}
$$
  =2^{-\frac{d-1}{2}}\pi^{-\frac{d+1}{2}}t\,e^t\,(|x|^2+t^2)^{-\frac{d+1}{4}}K_{\frac{d+1}{2}}\left(
  \sqrt{|x|^2+t^2}\right),\ \ \ x\in\RR.
$$
One verifies the properties
\begin{equation}\label{3.5}
p_t(x)>0,\ \ \ \int_{\RR} p_t(x)dx=1
\end{equation}
and the fact that $e^{-tH_0}$ can be extended as an well-defined bounded operator on the Banach
space
$$C_\infty (\RR):=\left\{f\in C(\RR)\mid\underset{|x|\to\infty}{\lim}f(x)=0 \right\},$$
equipped with the norm
$\|\cdot\|_\infty$. One also checks easily the Feller semi-group properties for the family of these extensions.

By \cite{CMS,DvC}, this Feller semi-group is generated by a L\'evy process. More precisely, on the space
$\Omega$ of the "c\`adlag" functions on $[0,\infty)$ ($\RR$-valued, continuous to the right, having left limits),
endowed with the smallest $\sigma$-algebra $\F$ for which all the coordinate functions
$$
\Omega\ni\omega\mapsto X_t(\omega):=\omega (t)\in\RR
$$
are measurable, one can define for each $x\in\RR$ a probabilistic measure $P_x$ such that $P_x\{X_0=x\}=1$ and
the random variables
$X_{t_1}-X_{t_0},\dots,X_{t_n}-X_{t_{n-1}}$
are independent with distributions
$p_{t_1-t_0},\dots,p_{t_n-t_{n-1}}$
for each $0=t_0<t_1<\dots<t_n<\infty$. If we denote by $E_x$ the expectation with respect to the probability
$P_x$, then for any $f\in C_\infty (\RR)$ and $t\ge 0$  we have
\begin{equation}\label{3.6}
\left[e^{-tH_0}f\right](x)=E_x \left(f\circ X_t\right),\ \ \ x\in\RR.
\end{equation}
By the L\'evy-It\^o Theorem \cite{IW,IT2} one has
\begin{equation}\label{3.7}
X_t=x+\int^{t_+}_0 \int_{\RR}y\,\widetilde N_X\,(ds\,dy),
\end{equation}
 where
 $$\widetilde N_X\,(ds\,dy):=N_X(ds\,dy)-\widehat N_X(ds\,dy),$$
 $$\widehat N_X(ds\,dy):=E_x\left(N_X(ds\,dy)\right)=dsdn(y)$$
 and $N_X$ is a "counting measure" on $[0,\infty)\times\RR$, defined by
 $$
   N_X((t,t']\times B):=\sharp\{s\in(t,t']\mid\,X_s\neq X_{s-},\,X_s-X_{s-}\in B\},
 $$
 where $0<t<t'$ and $B$ is a Borel subset of $\RR$.
   Using the procedure from \cite{IT2} (where one works with the quantification $\mathfrak{Op}_A$), one
gets a Feynman-Kac-It\^o formula for the Hamiltonian $H_A$. For $u\in\l,\ x\in
 \RR$ and $t\ge 0$ one has
\begin{equation}\label{3.8}
\left(e^{-tH_A}u\right)(x)=E_x\left((u\circ X_t)\,e^{-S(t,X)}\right),
\end{equation}
where
\begin{equation}\label{3.9}
  S(t,X):=i\int^{t_+}_0\int_{\RR}\widetilde N_X(dsdy)\left\la \int^1_0 A(X_{s-}+ry)dy,y\right\ra +
\end{equation}
$$
+i\int^t_0\int_{\RR} \widehat N_X(dsdy)\left\la \int^1_0 [A(X_{s}+ry)-A(X_s)]dr,y\right\ra .
$$
Let us remark that from (\ref{3.8}) and (\ref{3.6}) one obtains the diamagnetic inequality for the relativistic
hamiltonian $H_A$:
\begin{equation}\label{3.10}
\left|e^{-tH_A}u\right| \le e^{-tH_0} |u|,\ \ \ \forall\,u\in\l,\ \forall\,t\ge 0.
\end{equation}

(\ref{3.10}) implies another proof of the fact that $H_A\ge 0$: $e^{-tH_0}$ is a contraction,
thus $e^{-tH_A}$ is a contraction too, which implies $H_A\ge 0$.

Once again from (\ref{3.10}), it follows that for any $\lambda >0,\,r>0$ and $u\in\l$ one has
\begin{equation}\label{3.11}
\left|(H_A+\lambda)^{-r}u\right|\le (H_0+\lambda)^{-r}|u|.
\end{equation}
 This inequality is deduced using the fact that for any lower semi-bounded self-adjoint operator $H$ in a
 complex Hilbert space $\H$, for any $r>0$ and any $\lambda\in\R$ such that $\lambda+\inf\sigma(H)>0$, one has
 \begin{equation}\label{3.12}
 (H+\lambda)^{-r}=\frac{1}{\Gamma (r)}\int^\infty_0 t^{r-1}e^{-\lambda t}e^{-tH}dt,
 \end{equation}
  where $\Gamma$ is the Euler function of the second kind.

\section{The Hamiltonian $H(A;V)$}\label{position}

We denote by $h_A$ the quadratic form associated to $H_A$:
\begin{equation}\label{quadric}
h_A (u,v):=\left(H^{1/2}_A u,H^{1/2}_A v\right)_{L^2 (\R^d)},\ \ \ u,v\in D(h_A):=D(H^{1/2}_A).
\end{equation}
To a function $W\in L^1_{\rm loc}(\R^d),\,\ W\ge 0$, one assigns a quadratic form $q_W$:
\begin{equation}\label{form}
q_W(u,v):=\int_{\R^d}W(x)u(x)\overline{v(x)}dx,\ \ \ u,v\in D(q_W):=\{f\in L^2(\R^d)\mid W^{1/2}f\in L^2(\R^d)\}.
\end{equation}
These forms are symmetric, closed and positive. We set
$$
h_A(u):=h_A (u,u),\ \ \ q_W(u):=q_W(u,u).
$$
The next result is known \cite{IMP2}, but for convenience we are going to include a proof.

\begin{proposition}\label{sesq}
We assume (i) and (ii). Then the sesquilinear form
$
h=h(A;V):=h_A+\qvp-\qvm
$
is well-defined on $D(h_A)\cap D(\qvp)$, being symmetric, closed and lower semi-bounded.
Thus it defines a lower semi-bounded self-adjoint operator on $ L^2(\R^d)$, denoted by
$H=H(A;V):=H_A\overset{\cdot}{+}V$ (in the sense of forms).
\end{proposition}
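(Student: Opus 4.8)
The plan is to verify the hypotheses of the KLMN theorem, which states that if $h_A$ is a closed, lower semi-bounded form and $\qvm$ is a symmetric form that is $h_A$-bounded with relative bound $<1$ (in the appropriate sense: $\qvm(u)\le a\,h_A(u)+b\|u\|^2$ with $a<1$ on a form core), while $\qvp\ge 0$ is an arbitrary closed positive form, then $h_A+\qvp-\qvm$ is well-defined on $D(h_A)\cap D(\qvp)$, closed and lower semi-bounded, hence defines a self-adjoint operator. The three forms $h_A$, $\qvp$, $\qvm$ are each symmetric, closed and positive by the remarks preceding the statement (for $h_A$ this uses that $H_A\ge 0$ is self-adjoint; for $q_W$ this is the standard fact about multiplication forms).

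The first step is to translate Hypothesis (ii) into an estimate relative to $h_A$ rather than $h_0$. We are given \eqref{mult}: $\qvm(u)\le\alpha\|H_0^{1/2}u\|^2+\beta\|u\|^2$ for $u\in\H^{1/2}(\R^d)=D(H_0^{1/2})$, with $\alpha<1$. The diamagnetic inequality is the tool here: from \eqref{3.10} one has $|e^{-tH_A}u|\le e^{-tH_0}|u|$, and more precisely one knows (this is the usual consequence, e.g. via the quadratic form characterization $h_A(u)=\lim_{t\downarrow0}t^{-1}(u-e^{-tH_A}u,u)$ together with pointwise domination) that $D(H_A^{1/2})\subset D(H_0^{1/2})$ is \emph{not} what one needs — rather one needs the reverse comparison of forms on the intersection of domains. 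The clean route: the diamagnetic inequality gives, for $u\in D(h_A)$, that $|u|\in D(h_0)$ with $h_0(|u|)\le h_A(u)$, i.e. $\|H_0^{1/2}|u|\|^2\le\|H_A^{1/2}u\|^2$. Then $\qvm(u)=\int V_-|u|^2=\qvm(|u|)\le\alpha\|H_0^{1/2}|u|\|^2+\beta\|u\|^2\le\alpha\,h_A(u)+\beta\|u\|^2$, which is exactly the relative form bound with constant $\alpha<1$ on $D(h_A)$, hence in particular on $D(h_A)\cap D(\qvp)$.

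Given this, the second step is to invoke KLMN directly. For $u\in D(h_A)\cap D(\qvp)$ we have
\begin{equation*}
h(u)=h_A(u)+\qvp(u)-\qvm(u)\ge (1-\alpha)h_A(u)+\qvp(u)-\beta\|u\|^2\ge -\beta\|u\|^2,
\end{equation*}
so $h$ is lower semi-bounded. Closedness follows because $h_A+\qvp$ is closed on $D(h_A)\cap D(\qvp)$ (sum of two closed positive forms is closed), and subtracting the $(h_A+\qvp)$-bounded form $\qvm$ with relative bound $\alpha<1$ preserves closedness — indeed the norm $(h_A(u)+\qvp(u)+\|u\|^2)^{1/2}$ is equivalent to $(h(u)+C\|u\|^2)^{1/2}$ for suitable $C$, by the two-sided estimate $(1-\alpha)(h_A+\qvp)(u)-\beta\|u\|^2\le h(u)\le (1+\alpha)(h_A+\qvp)(u)+\beta\|u\|^2$. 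A closed, densely defined, lower semi-bounded symmetric form defines a unique self-adjoint lower semi-bounded operator by the first representation theorem; we name it $H=H(A;V)$. Density of the form domain holds since $\S(\R^d)\subset D(h_A)\cap D(\qvp)$ when $V_+\in L^1_{\rm loc}$, or more carefully one notes $C_0^\infty(\R^d)$ lies in the intersection under (ii) (using $V_+\in L^1_{\rm loc}$).

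The main obstacle is the first step: justifying that the diamagnetic inequality, stated in \eqref{3.10} at the level of the semigroups, upgrades to the form inequality $h_0(|u|)\le h_A(u)$ for all $u\in D(h_A)$. This requires either the formula $h_A(u)=\lim_{t\downarrow0}\tfrac1t\big(\|u\|^2-(e^{-tH_A}u,u)\big)$ combined with $|(e^{-tH_A}u,u)|\le(e^{-tH_0}|u|,|u|)$ — which needs care since the left side need not be real a priori, so one works with $\mathrm{Re}$ and uses $|u|$ — together with the fact that $\liminf$ of the difference quotients being finite forces $|u|\in D(h_0)$; this is a standard but slightly delicate argument (Simon's characterization of form domains via semigroups). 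Everything after that is routine application of KLMN, and one should also record that $H_A=H(A;0)$ trivially, matching the claim in the text.
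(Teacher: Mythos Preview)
Your argument is correct, but it follows a genuinely different route from the paper's. The paper does \emph{not} pass through the form-level diamagnetic inequality $h_0(|u|)\le h_A(u)$. Instead, it first builds the positive operator $H_+:=H_A\overset{\cdot}{+}V_+$ associated to $h_A+\qvp$, then uses the Kato--Trotter product formula $e^{-tH_+}=s\text{-}\lim_n\bigl[e^{-\frac{t}{n}H_A}e^{-\frac{t}{n}V_+}\bigr]^n$ together with the semigroup diamagnetic inequality \eqref{3.10} and the integral representation \eqref{3.12} to obtain the pointwise resolvent bound $|(H_++\lambda)^{-1/2}f|\le(H_0+\lambda)^{-1/2}|f|$. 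From this one reads off directly that $\qvm$ is $(h_A+\qvp)$-bounded with relative bound $<1$, by writing $v=(H_++\lambda)^{-1/2}f$ and applying Hypothesis (ii) to $u=(H_0+\lambda)^{-1/2}|f|$.

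Your approach avoids Kato--Trotter entirely: you extract the form inequality $h_0(|u|)\le h_A(u)$ straight from $(e^{-tH_A}u,u)\le(e^{-tH_0}|u|,|u|)$ and the semigroup characterization of the form, which is more elementary and self-contained. The paper's route, by contrast, has the side benefit of yielding the resolvent domination for $H_+$ (inequality \eqref{infer}), though that particular inequality is not reused later --- the analogous bounds for $H$ itself come from the full Feynman--Kac--It\^o formula of Proposition \ref{under}. Both arguments ultimately reduce to KLMN, and both are valid; yours is the shorter one here.
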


\begin{proof}
The form $h_A+\qvp$, defined on $D(h_A)\cap D(\qvp)$, is densely defined, symmetric, closed and positive. The
conclusion of the Proposition would follow if we show that the form $\qvm$ is $(h_A+\qvp)$-bounded, with
relative bound $< 1$.

We denote by $H_+ :=H_A\overset{\cdot}{+}V_+$ the unique self-adjoint operator $\ge 0$ associated to the form $h_A+\qvp$.
Since $C^{\infty}_0 (\R^d)\subset D(h_A)\cap D(\qvp)$ we can use the version from \cite{KM} of Kato-Trotter formula
\begin{equation}\label{kato}
e^{-tH_{_+}}=s-\underset{n\to\infty}{\lim}\left[e^{-\frac{t}{n}H_A}e^{-\frac{t}{n}V_+}\right]^n,\ \ \ \forall\,t\ge 0.
\end{equation}
Combining with (3.10) and (3.12) we infer that for every $r>0,\ \lambda >0\ {\rm and}\ f\in L^2(\R^d)$ one has
\begin{equation}\label{infer}
\left|(H_++\lambda)^{-r}f\right|\le (H_0+\lambda)^{-r}|f|.
\end{equation}
Let $g\in L^2(\R^d),\ \lambda >0$ large enough, $u:=(H_0+\lambda)^{-1/2}g$. By using the assumption (ii),
there exists $\alpha\in (0,1),\ \beta\ge 0$ and $\alpha '\in (0,1)$ such that
\begin{equation}\label{tel}
\qvm (u)\le\alpha \|H^{1/2}_0 u\|^2_{L^2(\R^d)}+\beta\|u\|^2_{L^2(\R^d)}=\alpha\|H^{1/2}_0
(H_0+\lambda)^{-1/2}g\|^2_{L^2(\R^d)}+
\end{equation}
$$
+\beta\|(H_0+\lambda)^{-1/2}g\|^2_{L^2(\R^d)}\le\left(\alpha+\frac{\beta}{\lambda}\right)\|g\|^2_{L^2(\R^d)}
\le\alpha'\|g\|^2_{L^2(\R^d)}.
$$
For $v\in D(h_A)\cap D(\qvp)$ we set $f:=(H_++\lambda)^{1/2}v$ and $g:=|f|$.
Using (\ref{infer}) with $r=1/2$, (\ref{tel}) and the explicit form of $\qvm$, we get
$$
\qvm (v)=\qvm[(H_++\lambda)^{-1/2}f]\le\qvm [(H_++\lambda)^{-1/2}g]\le \alpha'\|g\|^2_{L^2(\R^d)}=
$$
$$
=\alpha'\|(H_++\lambda)^{1/2}v\|^2_{L^2(\R^d)}
=\alpha'[h_A(v)+\qvp (v)+\lambda\|v\|^2_{L^2(\R^d)}].
$$
\end{proof}

The Feynman-Kac-It\^o formula (3.8) can be extended to the Hamiltonian $H$ (cf. \cite{IT2}).
\begin{proposition}\label{under}
Under assumptions (i) and (ii), for any $u\in L^2(\R^d)$ and all $t\ge 0$, we have
\begin{equation}\label{usi}
\left(e^{-tH}u\right)(x)=E_x\left[(u\circ X_t)\,e^{-S(t,X)-\int^t_0 (V\circ X_s)ds}\right],\ \ \ x\in\R^d.
\end{equation}
\end{proposition}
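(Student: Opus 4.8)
The plan is to bootstrap from the already-available formula (\ref{3.8}) for $H_A$ by first treating $V_+\geq 0$ (a positive perturbation, where monotone convergence is available), then incorporating $V_-$ via its $H_A$-form-boundedness. First I would establish (\ref{usi}) for $H_+=H_A\overset{\cdot}{+}V_+$. Since $C^\infty_0(\R^d)\subset D(h_A)\cap D(\qvp)$, the Kato--Trotter product formula in the form (\ref{kato}) gives $e^{-tH_+}=\mathrm{s}\text{-}\lim_n\big[e^{-\frac tn H_A}e^{-\frac tn V_+}\big]^n$. Now I iterate (\ref{3.8}): for each factor $e^{-\frac tn H_A}$ the expression is $E_x\big((\,\cdot\,\circ X_{t/n})e^{-S(t/n,X)}\big)$, and using the Markov property of the L\'evy process $X$ together with the additivity of the stochastic integral $S$ over $[0,t]=\bigcup_k[\tfrac{(k-1)t}{n},\tfrac{kt}{n}]$, the $n$-fold product unfolds to
\begin{equation*}
\Big[\big(e^{-\frac tn H_A}e^{-\frac tn V_+}\big)^n u\Big](x)=E_x\Big[(u\circ X_t)\,e^{-S(t,X)}\prod_{k=1}^n e^{-\frac tn V_+(X_{kt/n})}\Big].
\end{equation*}
As $n\to\infty$ the Riemann sum $\frac tn\sum_k V_+(X_{kt/n})\to\int_0^t (V_+\circ X_s)ds$ along almost every c\`adlag path (the path has only countably many jumps, so the sampling points are a.s.\ continuity points), hence $\prod_k e^{-\frac tn V_+(X_{kt/n})}\to e^{-\int_0^t(V_+\circ X_s)ds}$. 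Since $V_+\geq 0$ all integrands are bounded by $|u\circ X_t|$ in modulus, so dominated convergence passes the limit inside $E_x$, and combined with the $L^2$-convergence from (\ref{kato}) (pass to an a.e.-convergent subsequence) this yields (\ref{usi}) with $V$ replaced by $V_+$.

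Next I would bring in $V_-$. Because $\qvm$ is $h_+$-form-bounded with relative bound $\alpha'<1$ (this is exactly the content of the proof of Proposition \ref{sesq}), the form sum $H=H_+\overset{\cdot}{+}(-V_-)$ is well defined, and one again has a Kato--Trotter representation $e^{-tH}=\mathrm{s}\text{-}\lim_n\big[e^{-\frac tn H_+}e^{\frac tn V_-}\big]^n$ — here one must use the version of the product formula valid for form sums with a form-bounded (possibly singular) negative part, which is where the hypothesis $V_-\in L^1_{\rm loc}$ together with the relative bound $<1$ is used; alternatively one approximates $V_-$ by $V_-\wedge M$, applies the bounded-perturbation case, and lets $M\to\infty$ using the quadratic-form monotone convergence theorem of Simon. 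Feeding the already-proved formula for $e^{-\frac tn H_+}$ into this product and repeating the Markov-property unfolding exactly as above produces
\begin{equation*}
\big[e^{-tH}u\big](x)=\lim_{n\to\infty}E_x\Big[(u\circ X_t)\,e^{-S(t,X)}\,e^{-\int_0^t(V_+\circ X_s)ds}\prod_{k=1}^n e^{\frac tn V_-(X_{kt/n})}\Big],
\end{equation*}
and the Riemann sum in the $V_-$-factor converges a.s.\ to $\int_0^t(V_-\circ X_s)ds$, giving the integrand $e^{-S(t,X)-\int_0^t(V\circ X_s)ds}$ with $V=V_+-V_-$, which is (\ref{usi}).

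The main obstacle is the justification of passing to the limit in the $V_-$-part: unlike the $V_+$-case, the exponential $e^{\frac tn V_-(\,\cdot\,)}$ grows, so there is no pathwise domination that is uniform in $n$, and one cannot simply invoke dominated convergence. The clean way around this is to work with $V_-^{(M)}:=\min(V_-,M)$, for which the bounded-perturbation argument applies verbatim, then send $M\to\infty$: on the operator side $H(A;V_+-V_-^{(M)})\to H$ in strong resolvent sense by Simon's monotone form convergence theorem (the relative bound $\alpha'<1$ is exactly what makes the limiting form closed and the convergence hold), so $e^{-tH(A;V_+-V_-^{(M)})}u\to e^{-tH}u$ in $L^2$; on the probabilistic side $\int_0^t V_-^{(M)}(X_s)ds\uparrow\int_0^t V_-(X_s)ds$ monotonically, and one controls the expectation $E_x\big(|u\circ X_t|\,e^{\int_0^t V_-(X_s)ds}\big)<\infty$ using precisely the Kato-class hypothesis $V_-\in\K_d$ (via (\ref{cato}) and Khas'minskii's lemma applied to the free process), so monotone/dominated convergence finishes the identification. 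This last integrability estimate is where Hypothesis (ii$^\prime$), rather than the weaker (ii), is genuinely needed, mirroring the role it plays throughout Sections 5--7.
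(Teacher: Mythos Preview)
The paper does not actually prove this proposition; it simply records the formula and cites \cite{IT2}. So there is no argument to compare against, and the question is whether your plan stands on its own.

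Your overall architecture---Trotter for $H_+=H_A\overset{\cdot}{+}V_+$, then truncation $V_-^{(M)}=V_-\wedge M$ and passage to the limit---is the standard one and is essentially sound. Two points need correction, however.

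First, a technical gap in the $V_+$ step: the claim that $\frac{t}{n}\sum_k V_+(X_{kt/n})\to\int_0^t (V_+\circ X_s)\,ds$ along a.e.\ path is not justified for $V_+\in L^1_{\rm loc}(\R^d)$ by the argument you give. The c\`adl\`ag structure of $X$ tells you nothing about $s\mapsto V_+(X_s)$ when $V_+$ is merely measurable; Riemann sums of a Lebesgue-integrable function need not converge. The usual remedy is to prove the formula first for bounded continuous $V$ (where your argument works), then let $V_+^{(M)}\uparrow V_+$: on the operator side strong resolvent convergence follows from monotone forms, and on the path side the exponentials are $\le 1$ and decrease, so dominated convergence applies.

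Second, and more importantly: your final paragraph asserts that Hypothesis (ii$'$) is ``genuinely needed'' for the $V_-$ limit, but the proposition is stated---and holds---under (ii) alone. Once the formula is available for $H(0;-V_-^{(M)})$ (bounded potential, no field), monotone convergence in $M$ on the path side together with the $L^2$-convergence $e^{-tH(0;-V_-^{(M)})}u\to e^{-tH(0;-V_-)}u$ (strong resolvent convergence from Simon's monotone form theorem, using only the relative form bound in (ii)) gives, for $u\ge 0$,
\[
E_x\Big[(u\circ X_t)\,e^{\int_0^t (V_-\circ X_s)\,ds}\Big]=\big(e^{-tH(0;-V_-)}u\big)(x)<\infty\quad\text{for a.e.\ }x.
\]
This is precisely the dominating function you need for the magnetic case (since $|e^{-S(t,X)}|=1$ and $e^{-\int V_+}\le 1$), and ordinary dominated convergence finishes the identification---no Khas'minskii lemma, no Kato class. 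So your plan does go through, but the appeal to (ii$'$) is both unnecessary and inconsistent with the stated hypotheses.
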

By using ideas from \cite{S2} and Propositions 2.13, 2.15 and \ref{under}, we are going to prove
\begin{proposition}\label{esse}
Under assumptions (i), (ii), $C^\infty_0 (\RR)$ is an essential domain for the form $h$.
\end{proposition}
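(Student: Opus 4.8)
The plan is to show that $C^\infty_0(\R^d)$ is dense in the form domain $D(h) = D(h_A)\cap D(\qvp)$ equipped with its form norm $\|u\|_h^2 := h(u,u) + (1+c)\|u\|^2$ (with $c$ chosen so that $h+c\ge 1$), by combining two approximation procedures from Section~2: cut-off by the functions $\psi_j$ and regularization by the magnetic convolution $R_j$. The natural strategy is to fix $u\in D(h)$ and first reduce to the case of compactly supported $u$ via the cut-offs $\psi_j u$, then regularize $\psi_j u$ by $R_k(\psi_j u)$, which lands in $C^\infty_0(\R^d)$ by Proposition~\ref{if}(a). Convergence in the $L^2$ norm and in the $h_A$-part of the form norm (i.e. convergence of $H_A^{1/2}(\cdot)$ in $L^2$, equivalently of $P_{1/2}(\cdot)$ since $p_{1/2}=\langle\xi\rangle^{1/2}$ has the right order) is exactly what Propositions~\ref{lim} and~\ref{if}(c),(d) deliver, because $p=\langle\xi\rangle-1\in S^1$ and $p_{1/2}\in S^{1/2}$.

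The real work is controlling the potential terms $\qvp$ and $\qvm$ under these two operations. For the cut-off step this is essentially automatic: $0\le\psi_j\le 1$, $\psi_j\to 1$ pointwise, and $\psi_j u\to u$ in $L^2$, so by dominated convergence $\qvp(\psi_j u - u)=\int V_+|\psi_j u - u|^2\to 0$ (using $V_+|u|^2\in L^1$, which holds since $u\in D(\qvp)$), and likewise the $\qvm$-part converges because $V_-$ is form-bounded and the $h_A$-norms converge, or directly since $V_-\in L^1_{\rm loc}$ and $\psi_j u$ has fixed compact support once $j$ is large on the relevant region — actually one must be slightly careful and use the relative form-boundedness of $\qvm$ to dominate $\qvm(\psi_j u - u)$ by $\alpha\,h_A(\psi_j u - u) + \beta\|\psi_j u - u\|^2$, both of which tend to $0$. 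For the regularization step, here is where I expect the main obstacle: $R_k w$ for $w\in L^2$ compactly supported is smooth and compactly supported, but $\qvp(R_k w)=\int V_+|R_k w|^2$ need not converge to $\qvp(w)$ for a general $L^2_{\rm loc}$ weight $V_+$, since $R_k$ is a \emph{magnetic} mollifier rather than an honest convolution and $V_+\in L^2_{\rm loc}$ only. This is precisely where the hypothesis (ii$'$), in particular $V_+\in L^2_{\rm loc}$, and ideas from \cite{S2} enter: one wants an estimate of the form $\int V_+|R_k w|^2 \le \int (R_k^* V_+ R_k)|w|^2$ or a comparison showing $|R_k w|^2 \le R_k(\,|w|^2\,) + (\text{lower order})$ in a suitable averaged sense, reducing to convergence of $\int (V_+ * \check\theta_k)|w|^2$, which does hold by standard mollification since $V_+\in L^1_{\rm loc}$ and $w$ is bounded with compact support (after a further truncation of $w$ in $L^\infty$, using that $D(\qvp)$-elements can be approximated by bounded ones). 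The phase factor $e^{i\langle x-y,\Gamma^A(x,y)\rangle}$ in $R_k$ has modulus one and tends to $1$ uniformly on compacts as $k\to\infty$, so it contributes only an error that vanishes by dominated convergence, exactly as in the proof of Proposition~\ref{if}(c).

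Concretely, the steps I would carry out are: (1) reduce to $u\in D(h)\cap L^\infty$ with compact support, using that such $u$ are dense in $D(h)$ for the form norm (truncate in $L^\infty$ and multiply by $\psi_j$, invoking relative form-boundedness of $\qvm$ and dominated convergence for $\qvp$); (2) for such $u$, set $u_k := R_k u \in C^\infty_0(\R^d)$ and verify $u_k\to u$ in $L^2$ and $P_{1/2}u_k\to P_{1/2}u$ in $L^2$ via Proposition~\ref{if}(c),(d), hence $h_A(u_k - u)\to 0$ since $h_A(w) = \|P_{1/2}w\|^2$ up to equivalence (more precisely $h_A(w)=\|H_A^{1/2}w\|^2$ and $H_A^{1/2}=\Op(q_{(1/2)})$ with $q_{(1/2)}\in S^{1/2}$ by Proposition~\ref{verif}(b) after shifting $H_A$ by a constant); (3) prove $\qvp(u_k)\to\qvp(u)$ and $\qvm(u_k)\to\qvm(u)$, splitting off the unimodular phase and reducing to ordinary mollification of the locally integrable weights $V_\pm$ against $|u|^2\in L^\infty$ with compact support, with the $\qvm$ convergence alternatively obtained from relative form-boundedness plus steps (2); (4) conclude $\|u_k - u\|_h\to 0$. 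The single genuinely delicate point, deserving the citation to \cite{S2}, is step (3) for $\qvp$: passing the mollification through the magnetic twist and handling the mere $L^2_{\rm loc}$ (equivalently, for $|u|^2$ bounded, $L^1_{\rm loc}$) regularity of $V_+$, which is why Hypothesis~(ii) alone suffices here but the sharper Hypothesis~(ii$'$) is what makes the later trace-class arguments work.
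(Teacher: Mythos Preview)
Your two-step scheme (cut-off by $\psi_j$, then magnetic regularization by $R_j$) matches the paper's architecture, and your treatment of $\qvm$ via relative form-boundedness is exactly what the paper does in its step~3. However, you have misplaced the main difficulty and left a real gap in step~(1).

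The gap is the reduction to $D(h)\cap L^\infty$. You propose to ``truncate in $L^\infty$'', but pointwise truncation of $u\in D(h_A)=\H^{1/2}_A(\R^d)$ is not obviously licit: the magnetic half-order Sobolev norm is defined through $\Op(\langle\xi\rangle^{1/2})$, not via a Gagliardo seminorm, and no contraction property is available. The paper bypasses this entirely by invoking the Feynman-Kac-It\^o formula (Proposition~\ref{under}): since $R(e^{-H})$ is always a form core and $|e^{-H}u|\le e^{-H(0,0)}|u|\in L^\infty$, one gets for free that $D(h)\cap L^\infty(\R^d)$ is an essential domain (for $V_-=0$). This is the step that genuinely requires the machinery of Sections~3--4, and it is why Proposition~\ref{under} is cited alongside Propositions~2.13 and~2.15 in the lead-in to the proof.

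Conversely, the point you flag as ``the single genuinely delicate point'' --- controlling $\qvp$ under the magnetic mollifier $R_k$ --- is in fact routine once $u\in L^\infty_{\rm comp}$. By Proposition~\ref{if}(a),(b), the functions $R_ku$ are uniformly bounded by $\|u\|_{L^\infty}$ and supported in a fixed compact set; passing to a subsequence converging a.e., dominated convergence against $V_+\in L^1_{\rm loc}$ gives $\qvp(R_{k}u-u)\to 0$ directly. No splitting of the unimodular phase or comparison with ordinary mollification is needed, and no appeal to $V_+\in L^2_{\rm loc}$ is made here.
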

\begin{proof}
Due to Hypothesis (ii) the form $h$ and the operator $H$ are well-defined.

1. Let us first suppose that $V_-=0$. We show that $D(h)\cap L^\infty_{\rm comp}(\RR)$ is an essential domain for $h$.
It is known that the range $R(e^{-H})$ is an essential domain for $h$. By Proposition \ref{under}, for any $u\in\l$
\begin{equation}\label{crig}
|e^{-H}u|\le e^{-H(0,0)}|u|,\ \ \ {\rm a.e.\ on}\ \RR
\end{equation}
the function on the right hand side being of class $L^\infty(\mathbb{R}^d)$.

Let $u\in D(h)\cap L^\infty (\RR)$, $\psi$ and $\psi_j$ as in Proposition 2.15 and $u_j:=\psi_j u,\ j\ge 1$. Then
$$u_j\in L^\infty_{\rm comp}(\RR)\cap D(\qvp),\ \ \underset{j\to\infty}{\lim}u_j=u\ {\rm in}\ \l\ \
{\rm and}\ \underset{j\to\infty}{\lim}\qvp (u_j-u)=0.$$
Let us notice that we have the equality
\begin{equation}\label{4.11}
h_A(v,w)=\left((H_A+1)^{1/2}v,(H_A+1)^{1/2}w\right)_{\l}-(v,w)_{\l}
\end{equation}
for any
$v,w\in D(h_A)=D(H^{1/2}_A)=D[(H_A+1)^{1/2}]$.
The operator $H_A+1$ is defined, by Proposition 2.8,
by the magnetic pseudo-differential operator $\Op(<\xi>)$, so by the point (b) of Proposition 2.9, $(H_A+1)^{1/2}$
is defined by an operator $\Op(q)$, where $q\in S^{1/2}(\RR)$ and
$q-<\xi>^{1/2}\in S^{-1/2}(\RR)$.
Since
$u\in D[(H_A+1)^{1/2}]$, we shall have
$\Op(q) u\in\l$; using Proposition 2.15 (b) we infer that $\Op(q)u_j$ belongs to $\l$ and
$\underset{j\to\infty}{\lim}\Op (q)u_j=\Op(q)u$
in $\l$. Since
$\Op (q)-\Op(<\xi>^{1/2})\in B[\l]$,
by Proposition 2.3, it follows that
$\Op(<\xi>^{1/2})u_j\in\l$,
so
$u_j\in\H^{1/2}_A(\RR)=D(H^{1/2}_A)=D(h_A)$.
Also using (\ref{4.11}), we get
$$
\underset{j\to\infty}{\lim}h_A (u_j-u)=0,\ \ {\rm so}\ \underset{j\to\infty}{\lim}u_j=u\ \,{\rm in}\ D(h).
$$

2. We prove that $C^\infty_0 (\RR)$ is an essential domain for $h(A;V_+)$.
Obviously $C^\infty_0 (\RR)\subset D(h)$.
Let $u\in D(h)\cap L^\infty_{\rm comp}(\RR)$ and $R_{j}u,\ j\ge 1$, defined as in Proposition 2.13.
Then
$$R_{j}u\in C^\infty_0 (\RR),\ \ \underset{j\to\infty}
{\lim}R_{j}u=u\ {\rm in}\ \l,\ \ \underset{j\to\infty}{\lim}\Op(q)R_ju=\Op(q)u\ {\rm in}\ \l,$$
where $q$ has been defined above. It follows that $\underset{j\to\infty}{\lim}h_A(R_ju-u)=0$.

On the other hand,
$$
{\rm supp} R_ju\subset\{x\in\RR\mid{\rm dist}(x,{\rm supp} u)\le 1\},\ |(R_ju)(x)-u(x)|
\le 2\|u\|_{\i},\ x\in\RR
$$
and there is a subsequence $(R_{j_k}u)_{k\ge 1}$ such that
$
(R_{j_k}u)(x)\underset{k\to\infty}{\longrightarrow}u(x)\ \ {\rm a.e.}\ x\in\RR$.
Using the Dominated Convergence Theorem we see that
$\underset{k\to\infty}{\lim}\qvp (R_{j_k}u-u)=0$,
thus
$\underset{k\to\infty}{\lim}R_{j_k}u=u\ {\rm in}\ D(h)$.

3. In order to end the proof we have to notice that $\qvm$ is relatively bounded with respect to $h(A;V_+)$ and consequently any convergent sequence from $D(h(A;V_+))$ is also convergent in $D(\qvm)$.
\end{proof}

\begin{corollary}\label{hypo}
Under hypothesis (i) and (ii), a vector $u\in D(h)$ belongs to $D(H)$ if and only if
$\Op(p)u+Vu\in\l$,
where $p(\xi):=<\xi>-1$. Moreover
$
Hu=\Op(p)u+Vu
$
for any $u\in D(H)$.
\end{corollary}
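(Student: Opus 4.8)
The plan is to derive the characterization of $D(H)$ from the first representation theorem for quadratic forms, combined with the essential-domain statement of Proposition \ref{esse}. Recall that $H$ is the self-adjoint operator associated to the form $h$; by the first representation theorem, $u\in D(h)$ belongs to $D(H)$ if and only if there is $w\in\l$ such that $h(u,v)=(w,v)_{\l}$ for all $v$ in a core of $h$, in which case $Hu=w$. Since $C^\infty_0(\RR)$ is a core by Proposition \ref{esse}, it suffices to test against $v\in C^\infty_0(\RR)$. So the first step is to write, for $u\in D(h)$ and $v\in C^\infty_0(\RR)$,
\begin{equation*}
h(u,v)=h_A(u,v)+\qvp(u,v)-\qvm(u,v),
\end{equation*}
and to compute each term as a pairing against $v$.

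The second step is to identify $h_A(u,v)$ with $(\Op(p)u,v)$ in the sense of distributions. Here $\Op(p)u\in\S'(\RR)$ always makes sense by Proposition \ref{rima}(a), and for $v\in C^\infty_0(\RR)\subset\S(\RR)$ one has $h_A(u,v)=(H_A^{1/2}u,H_A^{1/2}v)_{\l}=\langle\Op(p)u,\overline v\rangle$ by the definition of $H_A$ as the closure of $\Op(p)$ on $\S(\RR)$ and the formal self-adjointness of $\Op(p)$ (Proposition \ref{rima}(b)); to be careful one approximates $u$ in $D(h_A)$ by $\S(\RR)$-vectors using density, or invokes that $\Op(p)$ extended to $\H^1_A$ agrees with the distributional action. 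For the potential terms, $\qvp(u,v)-\qvm(u,v)=\int V\,u\,\overline v\,dx=\langle Vu,\overline v\rangle$, which is legitimate because $v$ has compact support and $V_\pm\in L^1_{\rm loc}$ together with $u\in D(\qvp)$ controls the $V_+$ part, while $\qvm$ is $h(A;V_+)$-bounded. Hence $h(u,v)=\langle\Op(p)u+Vu,\overline v\rangle$ for all $v\in C^\infty_0(\RR)$.

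The third step assembles the equivalence. If $\Op(p)u+Vu\in\l$, then $h(u,v)=(\Op(p)u+Vu,v)_{\l}$ first for $v\in C^\infty_0(\RR)$ and then, by the core property and continuity of both sides in the $D(h)$-norm, for all $v\in D(h)$; by the representation theorem $u\in D(H)$ and $Hu=\Op(p)u+Vu$. Conversely, if $u\in D(H)$ then $h(u,v)=(Hu,v)_{\l}$ for all $v\in D(h)$, so the distribution $\Op(p)u+Vu$ coincides with the $\l$-function $Hu$, hence lies in $\l$, and the formula $Hu=\Op(p)u+Vu$ follows.

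The main obstacle I anticipate is the justification that $h_A(u,v)$ equals the distributional pairing $\langle\Op(p)u,\overline v\rangle$ for general $u\in D(h_A)=\H^1_A(\RR)$ rather than $u\in\S(\RR)$: this requires knowing that the bounded extension $\Op(p)\colon\H^1_A\to\H^{-1}_A$ from Proposition \ref{for} is compatible with the distributional action on $\S'(\RR)$ from Proposition \ref{rima}(a), which is a standard but slightly technical consistency check (one passes to the limit from $\S(\RR)$, dense in $\H^1_A$, using continuity of $\Op(p)$ in both topologies). The potential terms are comparatively routine given Hypothesis (ii) and the form-boundedness of $\qvm$ already established in Proposition \ref{sesq}.
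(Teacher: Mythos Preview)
Your proposal is correct and follows essentially the same route as the paper: use Proposition \ref{esse} to reduce the representation theorem to test vectors $v\in C_0^\infty(\RR)$, identify $q_{V_\pm}(u,v)$ with $\langle V_\pm u,\overline v\rangle$, and show $h_A(u,v)=\langle\Op(p)u,\overline v\rangle$ by approximating $u$ in $D(h)$ by smooth compactly supported (or $\S$) vectors and passing to the limit using Proposition \ref{rima}. The ``main obstacle'' you flag is exactly the point the paper treats explicitly via the approximation $u_j\to u$ in $D(h)$ with $u_j\in C_0^\infty(\RR)$.
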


\begin{proof}
Let $u\in D(h)$. Since $\c$ is an essential domain for $h,\  u\in D(H)$ if and only if there exists $f\in\l$
such that
$$h(u,v)=(f,v)_{\l},\ \forall\,v\in\c;$$
if this is the case, then $Hu=f$.
By Proposition 4.1, $V_{\pm}u\in L^1_{\rm loc}(\RR)$ and
\begin{equation}\label{4.12}
q_{{}_{V_{\pm}}}(u,v)=<V_{\pm}u,\overline{v}>,\ \ \ \ \forall\,v\in \c,
\end{equation}
where we denoted by $<\cdot,\cdot>$ the duality bracket between $\D(\RR)$ and $\D'(\RR)$.
Let $\{u_j\}_{j\ge 1}\subset C^\infty_0 (\RR)$ such that $\underset{j\to\infty}{\lim}u_j=u$ in $D(h)$.
Then
$\underset{j\to\infty}{\lim}H_A^{1/2} u_j=H_A^{1/2} u\ \ {\rm in}\ \l$.

Using Proposition 2.2 (a), we get
\begin{equation}\label{4.13}
h_A(u,v)=\underset{j\to\infty}{\lim}(H^{1/2}_A u_j, H^{1/2}_A v)_{\l}=
\underset{j\to\infty}{\lim}(H_A u_j,v)_{\l}=
\end{equation}
$$
=\underset{j\to\infty}{\lim}<\Op (p)u_j,\overline{v}>=<\Op (p)u,\overline{v}>,\ \ \ \forall\, v\in \c.
$$
The Proposition follows immediately from the equality
\begin{equation}\label{4.14}
h(u,v)=<\Op (p)u,\overline{v}>+<Vu,\overline{v}>,\ \ \ \,v\in\c,
\end{equation}
which is a consequence of (\ref{4.12}) and (\ref{4.13}).
\end{proof}

\begin{proposition}\label{suppo}
We suppose that (i) and (ii) are true. Let $\varphi\in BC^\infty (\RR)$ such that $|M|<\infty$, where
$M:=\underset{|\alpha|=d+2}{\cup}{\rm supp}\,\partial^\alpha\varphi$.

(a) If $u\in D(H)$, then $\varphi u\in D(H)$. Moreover the commutator $[\varphi, H]$, which is well-defined on $D(H)$,
can be extended to an element of $B[\l]$.

(b) There exists a constant $C>0$, independent of $\varphi$, operators
$$S_\alpha,\ S'_\alpha\in B[\l],\ 1\le |\alpha|\le d+1,$$
independent of $\varphi$ and operators $T,\,T'\in\mathcal I_2$, such that
\begin{equation}\label{4.15}
\|T\|_{\mathcal I_2}+\|T'\|_{\mathcal I_2}\le C\underset{|\alpha|=d+2}{\rm max}
\|\partial^\alpha\varphi\|_{\l}|M|^{1/2}
\end{equation}
and
\begin{equation}\label{4.16}
[\varphi,H]=\underset{1\le |\alpha|\le d+1}{\sum}(\partial^\alpha\varphi)S_\alpha+T=
\underset{1\le |\alpha|\le d+1}{\sum}S'_\alpha (\partial^\alpha\varphi)+T'.
\end{equation}

(c) One has
\begin{equation}\label{4.17}
[(H+\lambda)^{-1},\varphi]=(H+\lambda)^{-1}[\varphi,H](H+\lambda)^{-1},\ \ \ \forall\,\lambda\in\R,
\ \lambda>-{\rm inf}\sigma(H).
\end{equation}
\end{proposition}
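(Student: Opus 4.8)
The strategy is to combine Corollary~\ref{hypo} (which identifies $H$ on its domain with $\Op(p)+V$ acting in the distributional sense) with the commutator decomposition of Proposition~\ref{const} applied to $P=\Op(p)$, $p(\xi)=\langle\xi\rangle-1\in S^1(\R^d)$, $m=1\le 1$, and $N=d+1$ (so $N>m+d-1=d$ as required). The point is that the potential $V$ is a multiplication operator, so it commutes with $\varphi$: formally $[\varphi,H]=[\varphi,\Op(p)]$, and the latter is handled by Proposition~\ref{const}, which produces exactly the two decompositions in~(\ref{4.16}) with $S_\alpha=-\tfrac1{\alpha!}\Op(D^\alpha_\xi p)\in B[\l]$ (since $D^\alpha_\xi p\in S^0$ for $|\alpha|\ge1$), $S'_\alpha$ analogous, and Hilbert--Schmidt remainders $T=T_N$, $T'=T'_N$ obeying the bound~(\ref{2.8}), which is precisely~(\ref{4.15}) with $M=M_N$. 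So the arithmetic of (b) is essentially immediate once (a) is in place; the content is (a), the assertion that $\varphi$ preserves $D(H)$ and that the commutator, a priori only densely defined, extends boundedly.

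For part~(a), I would argue as follows. First, $\varphi u\in D(h)$ whenever $u\in D(h)$: writing $h=h_A+\qvp-\qvm$, one checks $\varphi u\in D(h_A)=\H^{1/2}_A(\R^d)$ using that $(H_A+1)^{1/2}=\Op(q)$ with $q\in S^{1/2}$ (Proposition~\ref{verif}(b)) and that $[\varphi,\Op(q)]\in B[\l]$ by Proposition~\ref{const} applied with $p$ replaced by $q$, $m=1/2$, $N=d+1$; and $\varphi u\in D(\qvp)$ trivially since $\varphi$ is bounded, and likewise for $\qvm$. Next, to show $\varphi u\in D(H)$ when $u\in D(H)$, I invoke Corollary~\ref{hypo}: it suffices to verify $\Op(p)(\varphi u)+V(\varphi u)\in\l$. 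Since $V(\varphi u)=\varphi(Vu)$ and $Vu=Hu-\Op(p)u\in\l$ with $\varphi\in L^\infty$, the term $V\varphi u\in\l$. For the other term write $\Op(p)(\varphi u)=\varphi\,\Op(p)u+[\Op(p),\varphi]u$; the first summand is in $\l$ because $\Op(p)u=Hu-Vu\in\l$, and the second is in $\l$ because $[\Op(p),\varphi]$, by Proposition~\ref{const}, equals $\sum_{1\le|\alpha|\le N}S'_\alpha(\partial^\alpha\varphi)+T'_N$ with $S'_\alpha\in B[\l]$ and $T'_N\in\mathcal I_2\subset B[\l]$, hence is a bounded operator on $\l$. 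Thus $\Op(p)(\varphi u)+V\varphi u\in\l$ and $\varphi u\in D(H)$, with
$$
[\varphi,H]u=\varphi Hu-H\varphi u=\varphi(\Op(p)u+Vu)-\Op(p)(\varphi u)-V\varphi u=[\varphi,\Op(p)]u,
$$
which extends to the bounded operator $[\varphi,\Op(p)]\in B[\l]$ furnished by Proposition~\ref{const}. This simultaneously establishes the identity $[\varphi,H]=[\varphi,\Op(p)]$ on $D(H)$ that makes part~(b) a direct citation of Proposition~\ref{const}.

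For part~(c), with $\lambda>-\inf\sigma(H)$ so that $(H+\lambda)^{-1}\in B[\l]$ maps $\l$ into $D(H)$, the identity~(\ref{4.17}) is the standard resolvent commutator formula: for $u\in\l$, put $v=(H+\lambda)^{-1}u\in D(H)$, so $\varphi v\in D(H)$ by~(a) and
$$
(H+\lambda)(\varphi v)-\varphi(H+\lambda)v=(H\varphi-\varphi H)v=[\varphi,H]v=-[H,\varphi]v;
$$
applying $(H+\lambda)^{-1}$ on the left gives $\varphi v-(H+\lambda)^{-1}\varphi(H+\lambda)v=-(H+\lambda)^{-1}[H,\varphi]v$, i.e. $[\varphi,(H+\lambda)^{-1}]u=-(H+\lambda)^{-1}[H,\varphi](H+\lambda)^{-1}u$, which rearranges to~(\ref{4.17}) (note the sign conventions: $[\varphi,H]=-[H,\varphi]$). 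The expected main obstacle is the bookkeeping in part~(a): one must legitimately multiply $\Op(p)u$ by $\varphi$ inside the oscillatory-integral/distributional framework and know that $\varphi$ times an $\l$-distribution that happens to lie in $\l$ is again in $\l$, which is clear, but one must be careful that $\Op(p)(\varphi u)$ is a priori only a tempered distribution and that the splitting $\Op(p)(\varphi u)=\varphi\Op(p)u+[\Op(p),\varphi]u$ is valid as distributions—this holds because both sides agree on $\c$ by Corollary~\ref{hypo}'s computation~(\ref{4.13}) and Proposition~\ref{const}'s decomposition, and then the identification with an $\l$ function follows. Once that is handled cleanly, everything else is a citation of Propositions~\ref{const} and~\ref{verif} and Corollary~\ref{hypo}.
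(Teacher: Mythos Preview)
Your approach is essentially the same as the paper's: reduce to $[\varphi,H]=[\varphi,\Op(p)]$ on $D(H)$ via Corollary~\ref{hypo}, then cite Proposition~\ref{const} with $m=1$, $N=d+1$ for~(b), and the standard resolvent identity for~(c). Two remarks, however.

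First, there is a slip in your bookkeeping for~(a). You claim separately that $Vu=Hu-\Op(p)u\in\l$ and $\Op(p)u=Hu-Vu\in\l$; this is circular, and in fact neither term need lie in $\l$ individually for $u\in D(H)$ (Corollary~\ref{hypo} only guarantees their \emph{sum} is in $\l$). The paper's fix is to group correctly:
\[
\Op(p)(\varphi u)+V(\varphi u)=\varphi\bigl[\Op(p)u+Vu\bigr]+[\Op(p),\varphi]u,
\]
where the first summand is $\varphi$ times an $\l$-function and the second is a bounded operator applied to $u\in\l$. Your final displayed commutator computation $[\varphi,H]u=[\varphi,\Op(p)]u$ implicitly uses exactly this grouping, so your conclusion is sound; only the intermediate justification needs this correction.

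Second, for $\varphi u\in D(h_A)$ the paper takes a shorter route than your commutator argument with $\Op(q)$: since $\varphi\in S^0(\R^d)$ and multiplication by $\varphi$ equals $\Op(\varphi)$, Proposition~\ref{for} gives $\varphi u\in\H^{1/2}_A(\R^d)=D(h_A)$ directly. The paper also cites Proposition~\ref{indep}(a) rather than Proposition~\ref{const} for mere boundedness of $[\Op(p),\varphi]$, reserving Proposition~\ref{const} for the finer decomposition in~(b); your choice to use Proposition~\ref{const} throughout is of course fine.
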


\begin{proof}
(a) Let $u\in D(H)$. Then $u\in D(\qvp)\cap D(h_A)$. It follows that $\varphi u\in D(\qvp)$ and
$$u\in D(H^{1/2}_A)=D\left((H_A+1)^{1/2}\right)=\H^{1/2}_A(\RR).$$
Since $\varphi\in S^0(\RR)$ and $\Op (\varphi)$ is the operator of multiplication by $\varphi$,
by Proposition 2.6,
$$\varphi u\in \H^{1/2}_A(\RR)=D\left(H^{1/2}_A\right);$$
thus $\varphi u\in D(h)$.
By Proposition 2.10 (a) if follows that
$[\Op (p),\varphi]\in B[\l]$,
$p$ being given by Corollary 4.4. Therefore, computing in $\D'(\RR)$, we get
\begin{equation}\label{4.18}
\Op (p)(\varphi u)+V(\varphi u)=\varphi [\Op(p)u+Vu]+[\Op (p),\varphi]u\in\l.
\end{equation}
From Corollary \ref{hypo} we deduce that $\varphi u\in D(H)$. In addition, the equality (\ref{4.18}) shows that
\begin{equation}\label{4.19}
[\varphi, H]=[\varphi,\Op (p)]\ \ {\rm on}\ \D(H),
\end{equation}
which implies the last statement of point (a).

(b) follows from (\ref{4.19}) and proposition 2.14 with $m=1$ and $N=d+1$.

(c) is trivial.
\end{proof}

We close this Section with a result on gauge covariance of the operator $H$.

\begin{proposition}\label{4.6}
We assume (i) and (ii). Let $A$ be a vector potential for $B$ with components in
$C^{\infty}_{\rm pol}(\RR)$ and let $\widetilde{A}=A-d\varphi$ for some real function
$\varphi\in C^{\infty}_{\rm pol}(\RR)$. We denote by $U$ the unitary operator
of multiplication by $e^{-i\varphi}$ on $\l$. Then
\begin{equation}\label{4.20}
U\,H(A;V)\,U^{-1}=H(\widetilde{A};V).
\end{equation}
\end{proposition}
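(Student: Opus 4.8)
The plan is to establish the gauge covariance $U\,H(A;V)\,U^{-1}=H(\widetilde A;V)$ at the level of the quadratic forms, and then invoke the uniqueness of the self-adjoint operator associated with a closed lower semi-bounded form. First I would recall that by Proposition~\ref{esse} the space $C^\infty_0(\R^d)$ is an essential domain for the form $h(A;V)$, and the analogous statement holds for $h(\widetilde A;V)$ since $\widetilde A$ also satisfies the standing hypotheses (its components lie in $C^\infty_{\rm pol}(\R^d)$ and $d\widetilde A=dA=B$). Since $U$ is unitary on $\l$ and maps $C^\infty_0(\R^d)$ onto itself, it suffices to check, for all $u,v\in C^\infty_0(\R^d)$, the identity
\begin{equation*}
h(\widetilde A;V)(Uu,Uv)=h(A;V)(u,v),
\end{equation*}
together with the matching of form domains, which then follows from the essential-domain property and the density of $U\,C^\infty_0(\R^d)=C^\infty_0(\R^d)$.

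Next I would reduce the form identity to the potential-free magnetic part. The electric form $\qvp-\qvm$ is manifestly invariant under multiplication by the modulus-one function $e^{-i\varphi}$, because $q_W(Uu,Uv)=\int_{\R^d}W(x)e^{-i\varphi(x)}u(x)\overline{e^{-i\varphi(x)}v(x)}\,dx=q_W(u,v)$ for $W=V_\pm$. So everything comes down to showing $h_{\widetilde A}(Uu,Uv)=h_A(u,v)$ on $C^\infty_0(\R^d)$. Here I would use the gauge covariance of the magnetic quantization already recorded in the excerpt, namely $\mathfrak{Op}^{\widetilde A}(a)=e^{-i\varphi}\,\mathfrak{Op}^A(a)\,e^{i\varphi}$ for $A'=A+d\varphi$ (applied with the sign convention $\widetilde A=A-d\varphi$), specialized to the symbol $a(\xi)=<\xi>-1$ that defines $H_A$. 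This gives $U\,\mathfrak{Op}^A(p)\,U^{-1}=\mathfrak{Op}^{\widetilde A}(p)$ as operators on $\mathcal S(\R^d)$, hence $U\,H_A\,U^{-1}=H_{\widetilde A}$ by taking closures. Consequently $U$ intertwines $H_A^{1/2}$ with $H_{\widetilde A}^{1/2}$ and maps $D(h_A)=D(H_A^{1/2})$ onto $D(h_{\widetilde A})$, which yields $h_{\widetilde A}(Uu,Uv)=(H_{\widetilde A}^{1/2}Uu,H_{\widetilde A}^{1/2}Uv)=(UH_A^{1/2}u,UH_A^{1/2}v)=(H_A^{1/2}u,H_A^{1/2}v)=h_A(u,v)$, as desired.

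Assembling the pieces, $U$ maps $D(h(A;V))=D(h_A)\cap D(\qvp)$ onto $D(h_{\widetilde A})\cap D(\qvp)=D(h(\widetilde A;V))$, and on this common domain $h(\widetilde A;V)(Uu,Uv)=h(A;V)(u,v)$; since both forms are closed, lower semi-bounded, and symmetric (Proposition~\ref{sesq}), the representation theorem for forms forces $U\,H(A;V)\,U^{-1}=H(\widetilde A;V)$. The one technical point that requires care is the gauge-covariance identity $\mathfrak{Op}^{\widetilde A}(p)=U\,\mathfrak{Op}^A(p)\,U^{-1}$ for the unbounded case: one must justify it first on $\mathcal S(\R^d)$ from the explicit formula \eqref{of} (using that $\Gamma^{\widetilde A}(x,y)-\Gamma^A(x,y)$ is the gradient-type term whose circulation through $[x,y]$ produces exactly the phase difference $\varphi(x)-\varphi(y)$ via \eqref{writ}), and then pass to the self-adjoint closures $H_A$ and $H_{\widetilde A}$, noting that $U$ preserves the essential domain $\mathcal S(\R^d)$. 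This identity — effectively the statement that the transversal-gauge construction behaves correctly under adding a gradient — is the main obstacle, but it is essentially the content of the gauge-covariance property of the quantization $\mathfrak{Op}^A$ quoted in Section~1 and proved in \cite{IMP1}, so the argument is short once that is in hand.
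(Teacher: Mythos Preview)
Your argument is correct, but it proceeds along a somewhat different path from the paper's. The paper works at the level of operator domains via Corollary~\ref{hypo}: it takes $u\in D[H(\widetilde A;V)]$, uses the gauge-covariance identity \eqref{4.21} to obtain $\Op(p)(e^{i\varphi}u)+V(e^{i\varphi}u)\in\l$, separately checks that $e^{i\varphi}u$ lies in the form domain $D[h(A;V)]$, and then invokes Corollary~\ref{hypo} to conclude $e^{i\varphi}u\in D[H(A;V)]$ with the correct action. You instead stay entirely at the form level: from $U H_A U^{-1}=H_{\widetilde A}$ you get $U H_A^{1/2}U^{-1}=H_{\widetilde A}^{1/2}$, hence $U$ carries $D(h_A)\cap D(\qvp)$ onto $D(h_{\widetilde A})\cap D(\qvp)$ and intertwines the full forms, and the representation theorem finishes. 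Both routes rest on the same core identity $U\,\Op(p)\,U^{-1}=\mathfrak{Op}^{\widetilde A}(p)$ on $\S(\RR)$; your version is arguably more natural given that $H(A;V)$ is defined through forms, and it does not actually require the essential-domain Proposition~\ref{esse} you invoke at the start (the direct form-domain mapping in your final paragraph already suffices), whereas the paper's route via Corollary~\ref{hypo} makes the action of $H(A;V)$ on its domain more explicit.
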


\begin{proof}
We notice first that from the equality
$$
\varphi (x)-\varphi (y)=<x-y,\int^1_0(\nabla\varphi)((1-s)x+sy)ds>
$$
and from Definition 1.2, one gets the relation
\begin{equation}\label{4.21}
[e^{-i\varphi}\Op (a)(e^{i\varphi}w)](x)=\left[\mathfrak{Op}^{\widetilde A}(a)w\right](x),\ \ \ \forall\,x\in\RR
\end{equation}
for any $a\in S^m{\RR}$ and any $w\in\S (\RR)$.

Let $u\in D[H(\widetilde A;V)]$; cf. Corollary \ref{hypo}, $u\in
D[h(\widetilde A;V)]$ and
$$\mathfrak{Op}^{\widetilde A}(p)u+Vu\in\l,$$
where
$p(\xi):=<\xi>-1$. From (\ref{4.21}) we deduce that
\begin{equation}\label{4.22}
\Op (p)(e^{i\varphi}u)+V(e^{i\varphi}u)=e^{i\varphi}[\mathfrak{Op}^{\widetilde A}(p)u+Vu]\in\l.
\end{equation}
Let us show that $e^{i\varphi}u\in D[h(A;V)]$. Obviously $e^{i\varphi}u\in D(\qvp)$. We notice now that (2.2)
implies that
$w\in D(H_{\widetilde A})\ {\rm if\ and\ only\ if}\ e^{i\varphi}w\in D(H_A)$.
From (\ref{4.21}) we get $U\,H_A\,U^{-1}=H_{\widetilde A}$, so $U\,H^{1/2}_A\,U^{-1}=H^{1/2}_{\widetilde A}$
and then $U^{-1}[D(h_{\widetilde A})]=D(h_A)$. It follows that $e^{i\varphi}u\in D(h_A)$, so $e^{i\varphi}u\in
D[h(A;V)]$. Using Corollary \ref{hypo} and equality (\ref{4.22}), we deduce $U^{-1}u\in D[H(A;V)]$ as well as
(\ref{4.20}).
\end{proof}

\section{Trace estimations}\label{simptotic}

\begin{proposition}\label{unu}
Let us suppose that (i) and (ii) are verified. There exists $\mu\ge 1$, only depending on $V_-$, such that for all
$$\lambda\ge \lambda_0:=\max\{-{\rm inf}\sigma (H)+1,\mu\},\ \ \ r\ge r_0:=d+1,$$
there exists $C>0$ such that
for every bounded open subset $\Omega$ of $\RR$ we have
$\boldsymbol{1}_{\Omega}(H+\lambda)^{-r}\in\mathcal I_2$
and
\begin{equation}\label{5.1.}
             \|\boldsymbol{1}_\Omega (H+\lambda)^{-r}\|_{\mathcal I_2}\le C|\Omega|^{1/2}.
\end{equation}
We denoted by $\boldsymbol{1}_\Omega$ both the characteristic function of $\Omega$ and the associated multiplication
operator on $\l$.
\end{proposition}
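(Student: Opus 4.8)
The plan is to reduce the Hilbert--Schmidt estimate to a pointwise kernel bound for the semigroup generated by $H$, then integrate. First I would use the representation (3.12) to write, for $r>0$ and $\lambda$ large,
\begin{equation*}
(H+\lambda)^{-r}=\frac{1}{\Gamma(r)}\int_0^\infty t^{r-1}e^{-\lambda t}e^{-tH}\,dt,
\end{equation*}
and the Feynman--Kac--It\^o formula (Proposition 4.2, equation (\ref{usi})) to get a pointwise diamagnetic-type domination of the kernel of $e^{-tH}$ by the kernel of the semigroup $e^{-tH(0;-V_-)}$ (dropping the positive part $V_+$ and the phase $S(t,X)$, both of which only decrease the absolute value). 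Concretely, $|e^{-tH}u|\le e^{-tH(0;-V_-)}|u|$ pointwise a.e., so that the integral kernel satisfies $|K_t^H(x,y)|\le K_t^{H(0;-V_-)}(x,y)$. This is where Hypothesis (ii$'$), namely $V_-\in\K_d$, is essential: by the results of \cite{vC,DvC} the kernel of $e^{-tH(0;-V_-)}$ is, for each $t>0$, a bounded continuous function, and more precisely one has an estimate of the form $0\le K_t^{H(0;-V_-)}(x,y)\le e^{c(t)}p_t(x-y)$ with $c(t)\to 0$ as $t\to 0$ and $c$ locally bounded, $p_t$ being the free kernel (3.4).

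Next I would estimate the Hilbert--Schmidt norm directly. Writing $G_\lambda^{(r)}:=(H+\lambda)^{-r}$, its kernel is $G_\lambda^{(r)}(x,y)=\frac{1}{\Gamma(r)}\int_0^\infty t^{r-1}e^{-\lambda t}K_t^H(x,y)\,dt$, hence
\begin{equation*}
|G_\lambda^{(r)}(x,y)|\le \frac{1}{\Gamma(r)}\int_0^\infty t^{r-1}e^{-\lambda t}e^{c(t)}p_t(x-y)\,dt=:F_{\lambda,r}(x-y).
\end{equation*}
Then
\begin{equation*}
\|\boldsymbol{1}_\Omega(H+\lambda)^{-r}\|_{\mathcal I_2}^2=\int_\Omega\int_{\R^d}|G_\lambda^{(r)}(x,y)|^2\,dy\,dx\le |\Omega|\,\|F_{\lambda,r}\|_{L^2(\R^d)}^2,
\end{equation*}
so the claim (\ref{5.1.}) follows with $C=\|F_{\lambda,r}\|_{L^2(\R^d)}$ provided this $L^2$ norm is finite. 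Finiteness is the technical heart: I must choose $\mu$ (depending only on $V_-$, through the function $c(t)$ coming from the Kato-class estimate) large enough that $e^{-\lambda t}e^{c(t)}$ decays for large $t$, and $r\ge d+1$ large enough to absorb the small-$t$ singularity. For small $t$, using (3.4)--(3.5) one has a bound $p_t(x)\le C t(|x|^2+t^2)^{-(d+1)/2}$ up to the exponential factor $K_{(d+1)/2}$, which behaves like $\max(s^{-(d+1)/2},s^{-1/2})e^{-s}$ with $s=\sqrt{|x|^2+t^2}$ by (3.3); one checks $\|p_t\|_{L^2}\lesssim t^{-d/2}$ roughly, so $t^{r-1}\|p_t\|_{L^2}$ is integrable near $0$ once $r-1-d/2>-1$, i.e. $r>d/2$, which is comfortably implied by $r\ge d+1$. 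For large $t$, the exponential $e^{-\lambda t}e^{c(t)}$ with $\lambda\ge\mu$ dominates any polynomial growth of $c$ and of $t^{r-1}\|p_t\|_{L^2}$, so the tail converges. Combining, $\|F_{\lambda,r}\|_{L^2}<\infty$, uniformly in $\Omega$, which gives the proposition.

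The main obstacle, and the step that needs the most care, is the passage from the abstract semigroup domination to a genuine pointwise kernel inequality with an explicit, $t$-dependent constant controlled solely by $V_-$; this requires citing and correctly applying the Kato-class kernel estimates of \cite{vC,DvC} for $H(0;-V_-)$ and verifying that the constant $c(t)$ there is locally bounded with $c(t)\to0$ as $t\searrow0$, so that the choice of $\mu$ depends only on $V_-$. A secondary technical point is interchanging the time integral in (3.12) with the $dy$-integration defining the Hilbert--Schmidt norm, which is justified by Tonelli since everything in sight is nonnegative after taking absolute values.
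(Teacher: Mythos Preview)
Your strategy coincides with the paper's: write $(H+\lambda)^{-r}$ via (3.12), dominate $e^{-tH}$ pointwise by $e^{-tH(0;-V_-)}$ through the Feynman--Kac--It\^o formula, invoke a Kato-class kernel bound for the latter, and check that the resulting convolution kernel lies in $L^2(\R^d)$, which immediately gives $\|\boldsymbol{1}_\Omega(H+\lambda)^{-r}\|_{\mathcal I_2}\le\|L\|_{L^2}|\Omega|^{1/2}$.

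The one substantive difference is the form of the Kato-class estimate. You assume the clean bound
\[
K_t^{H(0;-V_-)}(x,y)\le e^{c(t)}p_t(x-y),
\]
whereas the paper uses what is actually stated in \cite{vC,DvC}: for any H\"older conjugate pair $\rho,\rho'>1$ there exist $M,b>0$ with
\[
0\le e^{-tH(0;-V_-)}(x,y)\le M e^{bt}\Bigl(\sup_{z}p_{t/2}(z)\Bigr)^{1/\rho'}\bigl(p_t(x-y)\bigr)^{1/\rho}.
\]
The paper takes $\rho=4$, estimates $\sup_z p_{t/2}(z)$ explicitly from (3.3)--(3.4), and then verifies by hand that the resulting function $L$ is square-integrable; this is where the threshold $r\ge d+1$ and the choice $\mu=b+\tfrac12$ arise. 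Your cleaner bound would shorten the calculation, but you should either locate it precisely in the L\'evy-process literature (it does not follow from the $\rho,\rho'$ inequality by letting $\rho\to1$) or derive it from a bridge/Khasminskii argument. Also tighten your hypothesis on $c(t)$: ``locally bounded'' does not ensure $\int_1^\infty t^{r-1}e^{-\lambda t+c(t)}\,dt<\infty$; what one actually obtains is $c(t)\le a+bt$, and it is this linear growth that makes the choice $\mu>b$ work and depend only on $V_-$.
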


\begin{proof}
 We use (3.12) and Proposition 4.2 to obtain that for any $f\in\l,\ \lambda\ge\lambda_0$ and $r\ge r_0$ one has
 \begin{equation}\label{5.2}
 |(H+\lambda)^{-r}f|\le \frac{1}{\Gamma (r)}\int^\infty_0 t^{r-1}e^{-\lambda t}e^{-tH(0;-V_-)}|f|dt.
 \end{equation}
Since $V_-\in\K_d$, by Theorem 1.5 from \cite{vC} (or Theorem 2.9 from \cite{DvC}), for any $t>0$ the operator
$e^{-tH(0,-V_-)}$ has an integral kernel satisfying: For any $\rho,\rho' >1,\ \frac{1}{\rho}+\frac{1}{\rho'}=1$,
one can choose positive constants $M, b$ such that
\begin{equation}\label{such}
0\le e^{-tH(0,V_-)}(x,y)\le Me^{bt}\underset{z\in\RR}{\sup}[p_{t/2}(z)]^{1/\rho'}[p_t (x-y)]^{1/\rho},
\ \ \forall \,t>0,\ x,y\in\RR,
\end{equation}
where $p_t$ is defined by (3.4). Using (3.4) and (3.3) it follows that there exists an absolute constant
$C>0$ such that
\begin{equation}\label{tha}
p_t(x)\le Ct e^t\left[(|x|^2+t^2)^{-\frac{d+1}{2}}+(|x|^2+t^2)^{-\frac{d+2}{4}}\right]e^{-(|x|^2+t^2)^{1/2}},
\ \ \ \forall t>0,\ x\in\RR.
\end{equation}
 We choose $\rho=4$ and $\rho'=\frac{4}{3}$ in (\ref{such}). From (\ref{tha}) it follows that for
 some $C_1>0$:
 \begin{equation}\label{5.3}
 \underset{z\in\RR}{\sup}[p_{t/2}(z)]^{3/4}\le C_1 \left(t^{-\frac{3d}{4}}+t^{-\frac{3d}{8}}\right),\ \ \ \forall t>0.
 \end{equation}
 Using (\ref{5.2}), (\ref{5.3}), (\ref{such}) and (\ref{tha}), we get the inequality
 \begin{equation}\label{5.4}
 |[(H+\lambda)^{-r}f](x)|\le\int_{\RR}L(x-y)|f(y)|dy=:(T|f|)(x),\ \ f\in\l,\ x\in\RR,
 \end{equation}
 where
 \begin{equation}\label{5.5}
 L(x):=C_2\int^\infty_0 e^{-(\lambda-b-1/4)t}\,t^{r-\frac{3}{4}}\left(t^{-\frac{3d}{4}}+t^{-\frac{3d}{8}}
 \right)\times
 \end{equation}
 $$
 \times\left[(|x|^2+t^2)^{-\frac{d+1}{8}}+(|x|^2+t^2)^{-\frac{d+2}{16}}\right]e^{-\frac{1}{4}(|x|^2+t^2)^{1/2}}dt\le
 $$
 $$
 \le C_2\left[|x|^{-\frac{d+1}{4}}+|x|^{-\frac{d+2}{8}}\right]e^{-\frac{|x|}{4}}\int^\infty_0 e^{-(\lambda-b-
 \frac{1}{4})t}\,t^{r-\frac{3}{4}}\left(t^{-\frac{3d}{4}}+t^{-\frac{3d}{8}}\right)dt,
 $$
 where $C_2$ is a positive constant. Choosing $\mu=b+\frac{1}{2}$, the assumptions insure the convergence of the last
 integral. It follows that $L\in\l,\ \forall d\ge 2.$

 The integral operator $\boldsymbol{1}_\Omega T$ is Hilbert-Schmidt, since
 \begin{equation}\label{5.6}
 \|\boldsymbol{1}_\Omega T\|_{\mathcal I_2}=\left[\int_{\RR}\int_{\RR}|\boldsymbol{1}_\Omega (x)L(x-y)|^2dxdy\right]^{1/2}=
 \|L\|_{\l}|\Omega|^{1/2}.
 \end{equation}
The conclusion of the Proposition follows from (\ref{5.4}), (\ref{5.6}) and Theorem 2.13 from [53].

\end{proof}
 \begin{corollary}\label{doi}
Under the assumptions of Proposition \ref{unu}, for any $m\ge 2r_0$ there exists $C>0$ such that for any $\Omega
\subset\RR$ open and bounded, we have $\boldsymbol{1}_\Omega (H+\lambda)^{-m}\boldsymbol{1}_\Omega \in\mathcal I_1$ and
\begin{equation}\label{5.7}
    \|\boldsymbol{1}_\Omega  (H+\lambda)^{-m}\boldsymbol{1}_\Omega \|_{\mathcal I_1}\le C|\Omega|.
\end{equation}
\end{corollary}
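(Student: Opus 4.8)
The plan is to factor the resolvent power as a product of two Hilbert-Schmidt operators and invoke the standard fact that $\mathcal{I}_2 \cdot \mathcal{I}_2 \subset \mathcal{I}_1$ with $\|ST\|_{\mathcal{I}_1} \le \|S\|_{\mathcal{I}_2}\|T\|_{\mathcal{I}_2}$. Concretely, write $m = r_1 + r_2$ with $r_1, r_2 \ge r_0 = d+1$ (possible since $m \ge 2r_0$), and split
\begin{equation*}
\boldsymbol{1}_\Omega (H+\lambda)^{-m}\boldsymbol{1}_\Omega = \left[\boldsymbol{1}_\Omega (H+\lambda)^{-r_1}\right]\left[(H+\lambda)^{-r_2}\boldsymbol{1}_\Omega\right].
\end{equation*}
By Proposition \ref{unu} the first bracketed factor lies in $\mathcal{I}_2$ with norm $\le C|\Omega|^{1/2}$, provided $\lambda \ge \lambda_0$. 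For the second factor, note that $(H+\lambda)^{-r_2}$ is self-adjoint (since $H$ is self-adjoint and $\lambda$ is real with $\lambda + \inf\sigma(H) > 0$) and $\boldsymbol{1}_\Omega$ is self-adjoint, so $(H+\lambda)^{-r_2}\boldsymbol{1}_\Omega = \left(\boldsymbol{1}_\Omega(H+\lambda)^{-r_2}\right)^*$; since the Hilbert-Schmidt class is closed under adjoints with equal norm, the second factor is again in $\mathcal{I}_2$ with norm $\le C|\Omega|^{1/2}$, again by Proposition \ref{unu}.

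Multiplying the two estimates gives $\boldsymbol{1}_\Omega (H+\lambda)^{-m}\boldsymbol{1}_\Omega \in \mathcal{I}_1$ and
\begin{equation*}
\|\boldsymbol{1}_\Omega (H+\lambda)^{-m}\boldsymbol{1}_\Omega\|_{\mathcal{I}_1} \le \|\boldsymbol{1}_\Omega (H+\lambda)^{-r_1}\|_{\mathcal{I}_2}\,\|(H+\lambda)^{-r_2}\boldsymbol{1}_\Omega\|_{\mathcal{I}_2} \le C^2 |\Omega|,
\end{equation*}
which is the claimed bound \eqref{5.7} after renaming the constant. One should take $r_1 = r_2 = r_0$ when $m = 2r_0$ and, for $m > 2r_0$, absorb the extra factor $(H+\lambda)^{-(m-2r_0)}$ — a bounded operator of norm $\le (\lambda+\inf\sigma(H))^{-(m-2r_0)}$ — into one of the two factors, or simply note $r_1 = r_0$, $r_2 = m - r_0 \ge r_0$ works directly since Proposition \ref{unu} holds for all $r \ge r_0$.

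I do not expect any genuine obstacle here: the only points requiring a word of care are (i) checking that $\lambda \ge \lambda_0$ makes the constant $C$ in Proposition \ref{unu} uniform in $\Omega$ — but that is exactly the content of that Proposition — and (ii) the elementary observation that $\mathcal{I}_2$ is a two-sided $*$-ideal in $B(L^2(\mathbb{R}^d))$, so both orderings $\boldsymbol{1}_\Omega (H+\lambda)^{-r}$ and $(H+\lambda)^{-r}\boldsymbol{1}_\Omega$ are Hilbert-Schmidt with the same norm. The corollary is thus a routine consequence of Proposition \ref{unu} together with Hölder's inequality for Schatten norms.
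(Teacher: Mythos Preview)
Your proof is correct and matches the paper's own argument essentially line for line: the paper also writes $m=r+s$ with $r,s\ge r_0$, factors $\boldsymbol{1}_\Omega(H+\lambda)^{-m}\boldsymbol{1}_\Omega$ as $\bigl[\boldsymbol{1}_\Omega(H+\lambda)^{-r}\bigr]\bigl[(H+\lambda)^{-s}\boldsymbol{1}_\Omega\bigr]$, and invokes Proposition~\ref{unu} together with $\|\cdot\|_{\mathcal I_1}\le\|\cdot\|_{\mathcal I_2}\|\cdot\|_{\mathcal I_2}$. Your additional remarks on adjoints and the handling of the case $m>2r_0$ are fine but not needed, since Proposition~\ref{unu} already applies to every $r\ge r_0$.
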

\begin{proof}
We choose $r\ge r_0,\ s\ge r_0,\ r+s=m $. Then
$$
 \|\boldsymbol{1}_\Omega  (H+\lambda)^{-m}\boldsymbol{1}_\Omega \|_{\mathcal I_1}\le  \|\boldsymbol{1}_\Omega  (H+\lambda)^
 {-r} \|_{\mathcal I_2} \|(H+\lambda)^{-s}\boldsymbol{1}_\Omega \|_{\mathcal I_2},
$$
and we use Proposition \ref{unu} to conclude.
\end{proof}

\begin{corollary}\label{trei}
Let $f\in L^\infty (\R),\ {\rm supp}f\subset (-\infty,a],\ a\in\R$. Under the assumptions of Proposition \ref{unu},
$\exists C>0$ such that for any $\Omega\subset\RR$ open and bounded we have $\boldsymbol{1}_\Omega f(H)\boldsymbol{1}_\Omega
 \in\mathcal I_1$ and
 \begin{equation}\label{5.8}
               \|\boldsymbol{1}_\Omega f(H)\boldsymbol{1}_\Omega \|_{\mathcal I_1}\le C|\Omega|.
 \end{equation}
\end{corollary}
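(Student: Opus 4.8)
The plan is to deduce this from the Hilbert--Schmidt estimate of Proposition \ref{unu} by transferring the spectral weight $(H+\lambda)^{-m}$ off $f$ and onto the two characteristic functions. Fix $\lambda\ge\lambda_0$ and $r\ge r_0$, and put $m:=2r$. Introduce the Borel function $g(t):=(t+\lambda)^{m}f(t)$ on $\R$, so that $f(H)=(H+\lambda)^{-m}g(H)$ by the functional calculus. Since $H$ is lower semi-bounded and $\lambda\ge\lambda_0\ge-\inf\sigma(H)+1$, we have $t+\lambda\ge 1$ for every $t\in\sigma(H)$; moreover $g(t)=0$ for $t>a$ because $f$ vanishes there, while on the bounded interval $[\inf\sigma(H),a]$ the weight $(t+\lambda)^{m}$ is continuous. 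Hence $g$ is bounded on $\sigma(H)$, with $\|g\|_{L^\infty(\sigma(H))}\le(a+\lambda)^{m}\|f\|_{L^\infty(\R)}$, and therefore $g(H)\in B(\l)$ with the same norm bound.

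Next I would split the remaining weight symmetrically. Since $(H+\lambda)^{-r}$ and $g(H)$ commute (functional calculus of the same self-adjoint operator),
\begin{equation*}
\boldsymbol{1}_\Omega f(H)\boldsymbol{1}_\Omega=\big[\boldsymbol{1}_\Omega(H+\lambda)^{-r}\big]\,g(H)\,\big[(H+\lambda)^{-r}\boldsymbol{1}_\Omega\big].
\end{equation*}
By Proposition \ref{unu} the operator $\boldsymbol{1}_\Omega(H+\lambda)^{-r}$ is Hilbert--Schmidt with $\|\boldsymbol{1}_\Omega(H+\lambda)^{-r}\|_{\mathcal I_2}\le C|\Omega|^{1/2}$; taking the adjoint (using $H=H^*$ and $\lambda\in\R$) gives $(H+\lambda)^{-r}\boldsymbol{1}_\Omega\in\mathcal I_2$ with the same bound. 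Thus $\boldsymbol{1}_\Omega f(H)\boldsymbol{1}_\Omega$ is a product of a Hilbert--Schmidt operator, a bounded operator and a Hilbert--Schmidt operator, hence lies in $\mathcal I_1$, and by the Hölder inequality for Schatten norms
\begin{equation*}
\|\boldsymbol{1}_\Omega f(H)\boldsymbol{1}_\Omega\|_{\mathcal I_1}\le\|\boldsymbol{1}_\Omega(H+\lambda)^{-r}\|_{\mathcal I_2}\,\|g(H)\|_{B(\l)}\,\|(H+\lambda)^{-r}\boldsymbol{1}_\Omega\|_{\mathcal I_2}\le C'|\Omega|,
\end{equation*}
with $C'$ depending on $f,\lambda,r$ and the constant of Proposition \ref{unu}, but not on $\Omega$.

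I do not expect a genuine obstacle here: the statement is essentially a corollary of Proposition \ref{unu} and the Schatten--Hölder inequality, in the same spirit as Corollary \ref{doi}. The only point needing a word of justification is the boundedness of $g$ on $\sigma(H)$, and this rests on exactly the two structural facts available --- the lower semi-boundedness of $H$, which controls the positive power $(t+\lambda)^{m}$ near the bottom of the spectrum, and the support condition ${\rm supp}\,f\subset(-\infty,a]$, which makes $g$ vanish on the unbounded part of $\sigma(H)$. Note also that no continuity of $f$ is required: $f(H)$ and $g(H)$ are defined through the Borel functional calculus, which is all the argument uses; and if one prefers not to take $m=2r$, one may factor $(H+\lambda)^{-m}g(H)=(H+\lambda)^{-r}g(H)(H+\lambda)^{-s}$ for any $r,s\ge r_0$ with $r+s=m\ge 2r_0$, the estimate being otherwise unchanged.
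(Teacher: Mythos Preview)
Your proof is correct and is essentially the same as the paper's: the paper writes $\boldsymbol{1}_\Omega f(H)\boldsymbol{1}_\Omega=\boldsymbol{1}_\Omega(H+\lambda)^{-r}\,(H+\lambda)^{2r}f(H)\,(H+\lambda)^{-r}\boldsymbol{1}_\Omega$ and observes that $(H+\lambda)^{2r}f(H)\in B[\l]$ by lower semi-boundedness and the support condition on $f$, which is exactly your $g(H)$ with $m=2r$. Your version just spells out the boundedness of $g$ and the Schatten--H\"older step in more detail.
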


\begin{proof}
We use the equality
$$
  \boldsymbol{1}_\Omega f(H)\boldsymbol{1}_\Omega=\boldsymbol{1}_\Omega(H+\lambda)^{-r}(H+\lambda)^{2r} f(H)(H+\lambda)^{-r}
  \boldsymbol{1}_\Omega ,\ \ r\ge r_0
$$
and Proposition \ref{unu}, taking into account the fact that $H$, being lower semi-bounded, satisfies
$$(H+\lambda)^{2r} f(H)\in B[\l].$$
\end{proof}

\section{The operator $H_\Omega$}\label{brasagain}

We assume (i) and (ii) for a while; let $H=H(A;V)$ be the operator constructed in Section 4. Let $\Omega$ be
an open subset of $\R^n$ and $\Omega^c$ its complement. For $n\in\N,\,n\ge 1$, we set
$H_n:=H+n1_{\Omega^c}$,
which is a self-adjoint operator on $L^2(\R^d)$ with domain $D(H_n)=D(H)$. The associated quadratic form
\begin{equation}\label{dratic}
h_n(u,v):=h(u,v)+n(\boldsymbol{1}_{\Omega^c}u,\boldsymbol{1}_{\Omega^c}v),\ \ \ \ \ u,v\in D(h_n)=D(h)
\end{equation}
is symmetric, lower semi-bounded and closed. We also have
$h\le h_n\le h_{n+1},\ \ \ \forall n\ge 1$.

We are going to identify $L^2(\Omega)$ with the closed subspace of $L^2(\R^d)$ whose elements are null on $\Omega^c$.
The operator $\boldsymbol{1}_\Omega$ will be the orthogonal projection of $L^2(\R^d)$ on $L^2(\Omega)$.

To the monotone sequence of forms $\{h_n\}_{n\ge 1}$ defined by (\ref{dratic}) one assigns the form $h_\Omega$
defined on
\begin{equation}\label{diudiu}
D(h_\Omega):=\{u\in\cap_{n\ge 1}D(h_n)\mid\sup_{n\geq 1}h_n(u,u)<\infty\}=D(h)\cap L^2(\Omega)
\end{equation}
by the equality
\begin{equation}\label{ciuciu}
h_\Omega(u,v)=\lim_{n\rightarrow\infty}h_n(u,v)=h(u,v),\ \ \ \ \ u,v\in D(h_\Omega).
\end{equation}
The form $h_\Omega$ is not densely defined but, by Theorem 4.1 from \cite{S1}, it is lower bounded and closed,
defining a unique pseudo-self-adjoint operator $H_\Omega$ on $L^2(\R^d)$;
we have $D(H_\Omega)\subset L^2(\Omega)$, $HD(H_\Omega)\subset\l$ and $H_\Omega$, considered as
an operator in $L^2(\Omega)$, is self-adjoint. In addition,
$\lim_{n\rightarrow\infty}H_n=H_\Omega$
in strong resolvent sense. We denote by $\mathcal C_H(\R)$ the set of functions
$f:[m_f,\infty)\rightarrow\R,$
where $m_f<\inf\sigma(H)$ (maybe depending on $f$),
$f$ continuous and
$\lim_{t\rightarrow\infty}f(t)=0.$
Since $\inf\sigma(H_n)$ and $\inf\sigma(H_\Omega)$ are smaller or
equal than $\inf\sigma(H)$, one can define for any $f\in\mathcal C_H(\R)$ the operators
$$f(H_n),f(H_\Omega)\in B\left[L^2(\R^d)\right].$$

The second one is defined as follows: $f(H_\Omega)|_{L^2(\Omega)}$ is the operator from $B\left[L^2(\Omega)\right]$
associated to $H_\Omega$ (seen as a self-adjoint operator in $L^2(\Omega)$) by the usual functional calculus,
while $f(H_\Omega)=0$ on
$L^2(\Omega)^\perp$. Then we have
$\lim_{n\rightarrow\infty}f(H_n)=f(H_\Omega)$
for the strong convergence in $B\left[L^2(\R^d)\right]$. We have
\begin{equation}\label{ploch}
f(H_\Omega)=\boldsymbol{1}_\Omega f(H_\Omega)=f(H_\Omega)\boldsymbol{1}_\Omega.
\end{equation}
In particular, the properties above are checked for the function
$$f(t)=(t+\lambda)^{-1},\ \ \ \ \lambda>-\inf\sigma(H),$$
defined on a neighborhood of $\sigma(H)$. Then
$$f(H)=(H+\lambda)^{-1},\ \ \ \ f(H_\Omega)=(H_\Omega+\lambda)^{-1}.$$

\begin{lemma}\label{emma}
If we assume (i) and (ii), for any $\Omega\subset\R^d$ open bounded set, the operator $H_\Omega$ has compact resolvent.
\end{lemma}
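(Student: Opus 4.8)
The plan is to reduce compactness of $(H_\Omega+\lambda)^{-1}$ to a compact embedding of a magnetic Sobolev-type space over the bounded set $\Omega$, using the diamagnetic bound and the nonnegativity of $H_0$. Fix $\lambda>-\inf\sigma(H)$ large, so that $(H_\Omega+\lambda)^{-1}\in B[L^2(\Omega)]$ is defined as above, with $(H_\Omega+\lambda)^{-1}=\boldsymbol{1}_\Omega(H_\Omega+\lambda)^{-1}=\boldsymbol{1}_\Omega(H_\Omega+\lambda)^{-1}\boldsymbol{1}_\Omega$. Since $H_\Omega$ arises from the closed form $h_\Omega$ on $D(h_\Omega)=D(h)\cap L^2(\Omega)$, every $u$ in the form domain satisfies, using $h=h_A+\qvp-\qvm$ and the relative bound $<1$ from Proposition \ref{sesq}, an estimate of the shape $h_A(u)+\qvp(u)\le C\big(h_\Omega(u)+\lambda\|u\|^2\big)$; in particular the unit ball of $D(h_\Omega)$ (in the graph norm of $h_\Omega$) is bounded in $\H^{1/2}_A(\R^d)$ and is supported in $\overline\Omega$.

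First I would record that it suffices to show: any sequence $\{u_n\}\subset D(h_\Omega)$ bounded in the $h_\Omega$-graph norm has an $L^2$-convergent subsequence; this is exactly the statement that the embedding $D(h_\Omega)\hookrightarrow L^2(\Omega)$ is compact, which in turn (by the standard functional-calculus argument) gives compactness of $(H_\Omega+\lambda)^{-1}$. By the previous paragraph such a sequence is bounded in $\H^{1/2}_A(\R^d)$ and supported in the fixed compact set $\overline\Omega$. Now I invoke the diamagnetic inequality: from (\ref{3.11}), $|(H_A+\lambda)^{-1/2}u|\le(H_0+\lambda)^{-1/2}|u|$ pointwise, so $\|(H_0+\lambda)^{-1/2}\,|u_n|\,\|\ge\|(H_A+\lambda)^{-1/2}u_n\|$, and since $\|u_n\|_{1/2,A}$ is bounded one deduces that $\{|u_n|\}$ is bounded in the ordinary Sobolev space $\H^{1/2}(\R^d)$ — more precisely one writes $u_n=(H_A+\lambda)^{-1/2}g_n$ with $\{g_n\}$ bounded in $L^2$, applies the diamagnetic bound to get $|u_n|\le(H_0+\lambda)^{-1/2}|g_n|$, hence $\{u_n\}$ is dominated in modulus by a bounded sequence in $\H^{1/2}(\R^d)$. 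Combined with the common compact support $\overline\Omega$, Rellich's theorem for $\H^{1/2}$ (compactness of $\H^{1/2}(\R^d)\cap\{\,\mathrm{supp}\subset\overline\Omega\,\}\hookrightarrow L^2$) yields an $L^2$-Cauchy subsequence of $\{u_n\}$.

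The step I expect to be the main obstacle is the passage from the magnetic form bound to a genuine uniform $\H^{1/2}(\R^d)$ bound on $|u_n|$ that is strong enough to feed into Rellich; the naive pointwise diamagnetic inequality controls $\|(H_0+\lambda)^{-1/2}u_n\|$ but one must be careful that this actually delivers boundedness of $|u_n|$ in the Sobolev $s=1/2$ sense and not merely in some weaker interpolation norm. The clean way to handle this is the substitution $u_n=(H_A+\lambda)^{-1/2}g_n$ with $g_n$ bounded in $L^2$, so that (\ref{3.11}) gives $|u_n|\le(H_0+\lambda)^{-1/2}|g_n|=:w_n$ with $w_n\in D((H_0+\lambda)^{1/2})=\H^{1/2}(\R^d)$ and $\|w_n\|_{\H^{1/2}}\le C\|g_n\|$; then $|u_n|\le w_n$ pointwise plus the support condition gives precompactness in $L^2$ of $\{w_n\}$ restricted to $\overline\Omega$, hence of $\{u_n\}$. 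Alternatively, if one prefers to avoid the diamagnetic route, one can argue directly that $D(h_\Omega)$ embeds continuously into $\H^{1/2}(\R^d)$ with functions supported in $\overline\Omega$ by noting $h_A(u)\ge\qvm$-dominated below and $h_A(u)+\|u\|^2\simeq\|u\|^2_{1/2,A}\ge c\|u\|^2_{1/2}$ on such compactly supported $u$ (local equivalence of magnetic and non-magnetic Sobolev norms on a bounded set, since $A$ is smooth), and then apply Rellich as before. Either way, once the compact embedding $D(h_\Omega)\hookrightarrow L^2(\Omega)$ is established, compactness of the resolvent follows by the usual characterization of operators with compact resolvent via their form domains.
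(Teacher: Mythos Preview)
Your overall strategy---reduce to showing that the form-domain ball, consisting of functions supported in $\overline\Omega$ and bounded in $\H^{1/2}_A$, is relatively compact in $L^2$---matches the paper's. The gap is in the step where you pass from the pointwise domination $|u_n|\le w_n:=(H_0+\lambda)^{-1/2}|g_n|$ to precompactness of $\{u_n\}$ in $L^2$. Pointwise domination by a precompact (or even constant) sequence does \emph{not} transfer precompactness: take $u_n(x)=e^{inx}\boldsymbol{1}_{[0,1]}(x)$ and $w_n\equiv\boldsymbol{1}_{[0,1]}$; then $|u_n|\le w_n$, the $w_n$ form a single point, yet $\{u_n\}$ has no $L^2$-convergent subsequence. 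The same obstruction kills the variant ``$\{|u_n|\}$ bounded in $\H^{1/2}$, hence $\{|u_n|\}$ precompact by Rellich, hence $\{u_n\}$ precompact'': passing to the modulus discards the phase, and it is precisely the phase that can prevent compactness.

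What the paper does instead is to keep the domination at the \emph{operator} level and invoke Pitt's theorem \cite{Pi}: if $S,T\in B[L^2(\R^d)]$ satisfy $|Sf|\le T|f|$ for all $f$ and $T$ is compact, then $S$ is compact. Concretely, one picks $\chi\in C_0^\infty(\R^d)$ with $\chi=1$ near $\overline\Omega$; the diamagnetic bound (\ref{3.11}) yields $|\chi(H_A+1)^{-1/2}f|\le\chi(H_0+1)^{-1/2}|f|$ for every $f$, and $\chi(H_0+1)^{-1/2}$ is compact since it maps $L^2$ into the compactly supported part of $\H^{1/2}$. Pitt then gives compactness of $\chi(H_A+1)^{-1/2}$, and since the bounded set $M\subset D(h_\Omega)$ is $\chi(H_A+1)^{-1/2}$ applied to a bounded set in $L^2$, it is relatively compact. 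So the missing ingredient in your argument is exactly this operator-level compactness criterion; without it (or an equivalent device) the diamagnetic route does not close. Your alternative suggestion---local equivalence of $\H^{1/2}_A$ and $\H^{1/2}$ on functions supported in $\overline\Omega$---would indeed finish the proof, but is itself nontrivial for the fractional order $1/2$ (the operator $(H_A+1)^{1/2}$ is nonlocal, so the elementary $\H^1$ argument via $(D-A)u=Du-Au$ does not apply directly); you would need to justify it via the magnetic symbolic calculus or via interpolation, which is a proof in itself.
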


\begin{proof}
It will be enough to show that any $M\subset D(H_\Omega)$, bounded for the graph norm defined by $H_\Omega$,
is relatively compact
in $L^2(\Omega)$. The set $M$ will be bounded in $D(h_\Omega)$, thus also bounded in $L^2(\Omega)$ and $D(h)$.
Hence the set
$M_A:=(H_A+1)^{1/2}M$
is bounded in $L^2(\R^d)$ and
$M=(H_A+1)^{-1/2}M_A.$

Let
$$\chi\in C_0^\infty(\R^d),\ \ \ 0\le\chi\le 1,\ \ \ \chi=1\ {\rm in\ a\ neighborhood\ of}\ \overline\Omega.$$
By (\ref{3.11}) one has
\begin{equation}\label{trecut}
|\chi(H_A+1)^{-1/2}f|\le\chi(H_0+1)^{-1/2}|f|,\ \ \ \ \ \forall f\in L^2(\R^d).
\end{equation}
Since
$$R(H_0+1)^{-1/2}=\H^{1/2}(\R^d),$$ the operator $\chi(H_0+1)^{-1/2}$ is compact on $L^2(\R^d)$.
By Pitt's Theorem \cite{Pi} and by
(\ref{trecut}), the operator $\chi(H_A+1)^{-1/2}$ is also compact on $L^2(\R^d)$. Since $\chi M=M$,
the set $M$ is relatively compact in $L^2(\Omega)$.
 \end{proof}

 \begin{proposition}\label{gr}
We assume that (i) and (ii') are verified. For any $\lambda\ge\lambda_0,\,r\ge r_0$ ($\lambda_0$ and $r_0$ as in
Proposition \ref{unu}), there is a constant $C>0$ such that for any open subsets $U,\Omega$ of $\R^d$ such
that $U\cap\Omega$ is bounded we have $\boldsymbol{1}_U(H_\Omega+\lambda)^{-r}\in\mathcal I_2$ and the next inequality holds:
\begin{equation}\label{securi}
\|\boldsymbol{1}_U(H_\Omega+\lambda)^{-r}\|_{\mathcal I_2}\le C|U\cap\Omega|^{1/2}.
\end{equation}
 \end{proposition}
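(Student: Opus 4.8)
The natural strategy is to transfer the estimate of Proposition \ref{unu} from $H$ to the monotone approximants $H_n=H+n\boldsymbol{1}_{\Omega^c}$ and then let $n\to\infty$, exploiting that the constant produced by Proposition \ref{unu} is insensitive to the addition of a nonnegative potential. First I would reduce to a bounded set: applying the identity (\ref{ploch}) to $f(t)=(t+\lambda)^{-r}$, which for $\lambda\ge\lambda_0$, $r\ge r_0$ belongs to $\mathcal C_H(\R)$, one gets $(H_\Omega+\lambda)^{-r}=\boldsymbol{1}_\Omega(H_\Omega+\lambda)^{-r}$, hence $\boldsymbol{1}_U(H_\Omega+\lambda)^{-r}=\boldsymbol{1}_{U\cap\Omega}(H_\Omega+\lambda)^{-r}$. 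Writing $W:=U\cap\Omega$, which is bounded and open, it suffices to prove $\|\boldsymbol{1}_W(H_\Omega+\lambda)^{-r}\|_{\mathcal I_2}\le C|W|^{1/2}$.

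Next I would run the proof of Proposition \ref{unu} for $H_n$ in place of $H$. Since $n\boldsymbol{1}_{\Omega^c}\ge 0$ one has $H_n=H(A;V_n)$ with $V_n:=V+n\boldsymbol{1}_{\Omega^c}$, so that $(V_n)_+=V_++n\boldsymbol{1}_{\Omega^c}\in L^2_{\rm loc}(\RR)$ and, crucially, $(V_n)_-=V_-\in\K_d$; thus $V_n$ still satisfies (ii$'$), and $H_n\ge H$ forces $-\inf\sigma(H_n)+1\le-\inf\sigma(H)+1\le\lambda_0$. By Proposition \ref{under} the Feynman--Kac--It\^o formula holds for $H_n$, hence, as in (\ref{5.2}),
\begin{equation*}
\bigl|(H_n+\lambda)^{-r}f\bigr|\le\frac{1}{\Gamma(r)}\int_0^\infty t^{r-1}e^{-\lambda t}\,e^{-tH(0;-V_-)}|f|\,dt,\qquad f\in\l,
\end{equation*}
the right-hand side depending only on $V_-$. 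From here the argument of Proposition \ref{unu} — the Kato-class kernel bound (\ref{such}) for $e^{-tH(0;-V_-)}$ and the resulting control of the kernel $L$ of the dominating integral operator $T$ — proceeds verbatim and yields a constant $C>0$ depending only on $d$, $r$, $\lambda$ and $V_-$, in particular independent of $n$ and of $W$, such that $\|\boldsymbol{1}_W(H_n+\lambda)^{-r}\|_{\mathcal I_2}\le C|W|^{1/2}$ for every $n\ge 1$.

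Finally I would pass to the limit. As $H_n\to H_\Omega$ in the strong resolvent sense, the convergence $f(H_n)\to f(H_\Omega)$ strongly (valid for $f\in\mathcal C_H(\R)$, as recalled before Lemma \ref{emma}), applied to $f(t)=(t+\lambda)^{-r}$, gives $\boldsymbol{1}_W(H_n+\lambda)^{-r}\to\boldsymbol{1}_W(H_\Omega+\lambda)^{-r}$ strongly. Choosing an orthonormal basis $(e_k)_k$ of $\l$ and invoking Fatou's lemma,
\begin{equation*}
\|\boldsymbol{1}_W(H_\Omega+\lambda)^{-r}\|_{\mathcal I_2}^2=\sum_k\lim_n\|\boldsymbol{1}_W(H_n+\lambda)^{-r}e_k\|^2\le\liminf_n\|\boldsymbol{1}_W(H_n+\lambda)^{-r}\|_{\mathcal I_2}^2\le C^2|W|,
\end{equation*}
so $\boldsymbol{1}_W(H_\Omega+\lambda)^{-r}\in\mathcal I_2$ and, combined with the first step, (\ref{securi}) follows. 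The one delicate point is the $n$-uniformity of the constant: it hinges on the fact — visible inside the proof of Proposition \ref{unu}, though not stated there — that the Hilbert--Schmidt bound is governed entirely by the heat kernel of $H(0;-V_-)$ and hence is unaffected by the confining term $n\boldsymbol{1}_{\Omega^c}$. The strong-resolvent convergence $H_n\to H_\Omega$, the functional-calculus convergence, and the lower semicontinuity of $\|\cdot\|_{\mathcal I_2}$ under strong limits are all standard.
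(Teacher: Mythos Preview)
Your proof is correct and follows essentially the same approach as the paper: both reduce to $\boldsymbol{1}_{U\cap\Omega}$ via (\ref{ploch}), both invoke the domination (\ref{5.2}) for the approximants $H_n$ (with $(V_n)_-=V_-$, so the constant is $n$-independent), and both pass to the limit using the strong resolvent convergence $H_n\to H_\Omega$. The only cosmetic difference is the order of operations: the paper first lets $n\to\infty$ in the pointwise inequality (\ref{5.2}) to obtain the same domination directly for $(H_\Omega+\lambda)^{-r}$ and then repeats the kernel argument of Proposition \ref{unu}, whereas you first get the uniform $\mathcal I_2$-bound for each $H_n$ and then use lower semicontinuity of the Hilbert--Schmidt norm under strong limits; both routes are standard and equivalent.
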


 \begin{proof}
By using the inequality (\ref{5.2}) for $H_n$ and the fact that
$$s-\lim_{n\rightarrow\infty}(H_n+\lambda)^{-r}=(H_\Omega+\lambda)^{-r},$$
one obtains that
$$
|(H_\Omega+\lambda)^{-r}f|\le\frac{1}{\Gamma(r)}\int^\infty_0 t^{r-1}e^{-\lambda t}e^{-tH(0;-V_-)}|f| dt,\ \ \ \ \
\forall f\in L^2(\R^d).
$$
The proof is completed in the same way as for Proposition \ref{unu}, since
$$
\boldsymbol{1}_U(H_\Omega+\lambda)^{-r}=\boldsymbol{1}_U\boldsymbol{1}_\Omega(H_\Omega+\lambda)^{-r}=
\boldsymbol{1}_{U\cap \Omega}(H_\Omega+\lambda)^{-r}.
$$
 \end{proof}

  \begin{corollary}\label{grr}
Under the assumptions of Proposition \ref{gr}, for any $m\geq 2r_0$, $\exists C>0$ such that for any
$\Omega\subset\R^d$ bounded and open
 one has
 $(H_\Omega+\lambda)^{-m}\in\mathcal I_1$
 and
 \begin{equation}\label{criic}
\parallel (H_\Omega+\lambda)^{-m}\parallel_{\mathcal I_1}\le C|\Omega|.
 \end{equation}
 \end{corollary}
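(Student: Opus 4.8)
The plan is to factor the resolvent power symmetrically and reduce the trace-class estimate to the Hilbert--Schmidt estimate already established in Proposition~\ref{gr}. Concretely, given $m\ge 2r_0$, I would choose $r,s\ge r_0$ with $r+s=m$ and write
\begin{equation*}
(H_\Omega+\lambda)^{-m}=(H_\Omega+\lambda)^{-r}(H_\Omega+\lambda)^{-s}.
\end{equation*}
Since $(H_\Omega+\lambda)^{-r}=\boldsymbol{1}_\Omega(H_\Omega+\lambda)^{-r}=\boldsymbol{1}_\Omega(H_\Omega+\lambda)^{-r}\boldsymbol{1}_\Omega$ by (\ref{ploch}), and likewise for the $s$-power, I can insert the characteristic function $\boldsymbol{1}_\Omega$ freely. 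Taking $U=\Omega$ in Proposition~\ref{gr} gives $\boldsymbol{1}_\Omega(H_\Omega+\lambda)^{-r}\in\mathcal I_2$ with norm $\le C|\Omega|^{1/2}$, and by taking adjoints (the operator $H_\Omega$ being self-adjoint on $L^2(\Omega)$, and $\lambda$ real) also $(H_\Omega+\lambda)^{-s}\boldsymbol{1}_\Omega\in\mathcal I_2$ with the same type of bound.

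Then the key step is the Hölder-type inequality for Schatten norms: if $T_1\in\mathcal I_2$ and $T_2\in\mathcal I_2$, then $T_1T_2\in\mathcal I_1$ and $\|T_1T_2\|_{\mathcal I_1}\le\|T_1\|_{\mathcal I_2}\|T_2\|_{\mathcal I_2}$. Applying this with $T_1=\boldsymbol{1}_\Omega(H_\Omega+\lambda)^{-r}$ and $T_2=(H_\Omega+\lambda)^{-s}\boldsymbol{1}_\Omega$, and using (\ref{ploch}) once more to recognize that $T_1T_2=(H_\Omega+\lambda)^{-m}$, yields
\begin{equation*}
\|(H_\Omega+\lambda)^{-m}\|_{\mathcal I_1}\le\|\boldsymbol{1}_\Omega(H_\Omega+\lambda)^{-r}\|_{\mathcal I_2}\,\|(H_\Omega+\lambda)^{-s}\boldsymbol{1}_\Omega\|_{\mathcal I_2}\le C^2|\Omega|,
\end{equation*}
which is (\ref{criic}) after renaming the constant. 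This is precisely the argument used in Corollary~\ref{doi}, transported to the operator $H_\Omega$; the proof is essentially a one-line consequence of Proposition~\ref{gr}.

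There is no real obstacle here, so the only thing to be careful about is bookkeeping: one must check that $\lambda\ge\lambda_0>-\inf\sigma(H)\ge-\inf\sigma(H_\Omega)$ so that all the negative powers are well-defined bounded operators (this was already noted in Section~\ref{brasagain}, where $\inf\sigma(H_\Omega)\le\inf\sigma(H)$), and that the constant $C$ in (\ref{criic}) depends only on $\lambda$, $m$ and $V_-$ but not on $\Omega$, which is inherited directly from the corresponding uniformity in Proposition~\ref{gr}. One should also make sure $2r_0\le m$ allows the splitting $r+s=m$ with both $r,s\ge r_0$, which is exactly the hypothesis. Thus the proof reads: choose $r=s=m/2$ if $m$ is even, or $r=r_0$, $s=m-r_0\ge r_0$ otherwise, apply Proposition~\ref{gr} to each factor, and conclude by the $\mathcal I_2\cdot\mathcal I_2\subset\mathcal I_1$ inequality.
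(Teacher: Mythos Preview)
Your proof is correct and follows exactly the same route as the paper: write $(H_\Omega+\lambda)^{-m}=\boldsymbol{1}_\Omega(H_\Omega+\lambda)^{-r}(H_\Omega+\lambda)^{-s}\boldsymbol{1}_\Omega$ with $r,s\ge r_0$, $r+s=m$, apply Proposition~\ref{gr} with $U=\Omega$ to each factor, and use the $\mathcal I_2\cdot\mathcal I_2\subset\mathcal I_1$ inequality. The bookkeeping remarks you add (well-definedness of the resolvent powers, $\Omega$-independence of the constant) are accurate and only make the argument more explicit.
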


 \begin{proof}
We use the identity
$$
(H_\Omega+\lambda)^{-m}=\boldsymbol{1}_\Omega(H_\Omega+\lambda)^{-r}(H_\Omega+\lambda)^{-s}\boldsymbol{1}_\Omega,
$$
where
$$r\ge r_0,\ \ s\ge r_0,\ \ r+s=m,$$
as well as Proposition \ref{gr} with $U=\Omega$.
 \end{proof}

  \begin{corollary}\label{grrr}
Let $f\in C_0(\R)$. Under the assumptions of Proposition \ref{gr}, there exists a constant $C>0$
such that for any $\Omega\subset\R^d$ open and bounded, we have $f(H_\Omega)\in\mathcal I_1$ and
\begin{equation}\label{crianza}
\parallel f(H_\Omega)\parallel_{\mathcal I_1}\le C|\Omega|.
\end{equation}
 \end{corollary}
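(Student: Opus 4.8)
The plan is to deduce Corollary~\ref{grrr} from Corollary~\ref{grr} together with the functional calculus, in exactly the same spirit as Corollary~\ref{trei} was deduced from Corollary~\ref{doi}. Fix $f\in C_0(\R)$ and choose $m\ge 2r_0$ even. Since $H_\Omega$, regarded as a self-adjoint operator on $L^2(\Omega)$, is lower semi-bounded with $\inf\sigma(H_\Omega)\ge\inf\sigma(H)$ (and we may take $\lambda\ge\lambda_0$ so that $H_\Omega+\lambda\ge 1$), the function $t\mapsto(t+\lambda)^{m}f(t)$ is bounded and continuous on $[\,\inf\sigma(H_\Omega),\infty)$; it is in fact bounded because $f$ has compact support, so $(t+\lambda)^{m}f(t)$ vanishes outside a compact set. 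Hence $g(H_\Omega):=(H_\Omega+\lambda)^{m}f(H_\Omega)\in B[L^2(\Omega)]$, and extending it by $0$ on $L^2(\Omega)^{\perp}$ as in the conventions preceding Lemma~\ref{emma}, we get $g(H_\Omega)\in B[\l]$ with norm controlled by $\sup_{t}|(t+\lambda)^{m}f(t)|$, which depends only on $f$, $\lambda$, $m$ and the lower bound of $\sigma(H)$, not on $\Omega$.

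Next I would write, using (\ref{ploch}) and the commutation of bounded Borel functions of $H_\Omega$,
\begin{equation*}
f(H_\Omega)=(H_\Omega+\lambda)^{-m}\,g(H_\Omega)=(H_\Omega+\lambda)^{-m/2}\,g(H_\Omega)\,(H_\Omega+\lambda)^{-m/2},
\end{equation*}
so that, by the ideal property of $\mathcal I_1$,
\begin{equation*}
\|f(H_\Omega)\|_{\mathcal I_1}\le\|(H_\Omega+\lambda)^{-m}\|_{\mathcal I_1}\,\|g(H_\Omega)\|_{B[\l]}.
\end{equation*}
Now Corollary~\ref{grr} gives $(H_\Omega+\lambda)^{-m}\in\mathcal I_1$ with $\|(H_\Omega+\lambda)^{-m}\|_{\mathcal I_1}\le C'|\Omega|$ for a constant $C'$ independent of $\Omega$, while $\|g(H_\Omega)\|_{B[\l]}\le C''$ independent of $\Omega$. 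Combining these two bounds yields $f(H_\Omega)\in\mathcal I_1$ and $\|f(H_\Omega)\|_{\mathcal I_1}\le C|\Omega|$ with $C=C'C''$, which is the assertion.

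The argument is essentially routine once the three ingredients — the conventions on $f(H_\Omega)$, the bounded functional calculus estimate $\|g(H_\Omega)\|_{B[\l]}\le\sup|(t+\lambda)^m f(t)|$, and Corollary~\ref{grr} — are in place; no further analysis of the kernel of the semigroup or of $\mathcal I_2$ norms is needed here, since all of that work has already been done in Proposition~\ref{gr} and Corollary~\ref{grr}. The only point that requires a sentence of care is the uniformity of $\|g(H_\Omega)\|_{B[\l]}$ in $\Omega$: this is where one invokes that $\inf\sigma(H_\Omega)\ge\inf\sigma(H)$ uniformly in $\Omega$ (recorded in Section~6), so that the supremum of $|(t+\lambda)^m f(t)|$ is taken over a half-line that does not depend on $\Omega$, and the supremum itself is finite because $f\in C_0(\R)$. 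I do not anticipate a genuine obstacle; the mild subtlety is purely bookkeeping about the zero-extension of $f(H_\Omega)$ off $L^2(\Omega)$ and keeping track that all constants are $\Omega$-independent.
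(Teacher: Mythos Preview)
Your proof is correct and follows essentially the same route as the paper: one writes $f(H_\Omega)=(H_\Omega+\lambda)^{-m}g(H_\Omega)$ with $g(t)=(t+\lambda)^m f(t)$ bounded (uniformly in $\Omega$), and then combines the bound on $\|g(H_\Omega)\|_{B[\l]}$ with the $\mathcal I_1$-estimate of Corollary~\ref{grr}. The only cosmetic difference is that the paper obtains the uniform bound $\|g(H_\Omega)\|_{B[\l]}\le\sup_\R|g|$ from the strong convergence $g(H_n)\to g(H_\Omega)$ (its inequality (\ref{aferim})), whereas you get it directly from the functional calculus on $L^2(\Omega)$ together with the zero extension; both arguments are equally short.
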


 \begin{proof}
For any $g\in \mathcal C_H(\R)$, since
$s-\lim_{n\rightarrow\infty}g(H_n)=g(H_\Omega),$
one obtains for each
$\Omega\subset\R^d$ open set, the inequality
\begin{equation}\label{aferim}
\parallel g(H_\Omega)\parallel_{B[L^2(\R^d)]}\le\sup_\R|g|.
\end{equation}
We choose
$$g(t):=(t+\lambda)^mf(t),\ \ {\rm where}\ \ m\ge 2r_0,\ \lambda\ge\lambda_0,\ -\lambda\notin{\rm supp} f.$$
Then
$$g\in\mathcal C_H(\R),\ \
-\lambda\notin {\rm supp}g\ \ {\rm and}\ \ f(t)=(t+\lambda)^{-m}g(t),\ \forall t\in\R.$$
It follows that
$$f(H_\Omega)=(H_\Omega+\lambda)^{-m}g(H_\Omega),$$
so (\ref{crianza}) is a consequence of (\ref{criic}) and (\ref{aferim}).
 \end{proof}

\section{Proof of Theorem 1.1}\label{mmrr}

\begin{lemma}\label{sum}
We assume (i) and (ii'). Let $\lambda >-\inf \sigma (H),\ \Omega\subset\RR$ an open bounded set and $\varphi\in
BC^\infty (\RR),\ \varphi=1$ on $\Omega^c$. Then one has
\begin{equation}\label{7.1}
(H+\lambda)^{-1}-(H_{\Omega}+\lambda)^{-1}=\left[(H+\lambda)^{-1} -(H_\Omega+\lambda)^{-1}\right]
\left[\varphi +[H,\varphi](H_\Omega+\lambda)^{-1}\right]=
\end{equation}
$$
=\left[\varphi-(H+\lambda)^{-1} [H,\varphi]\right]\left[(H+\lambda)^{-1} -(H_\Omega+\lambda)^{-1}\right].
$$
\end{lemma}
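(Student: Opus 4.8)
The identity to be established is a resolvent-type identity comparing $(H+\lambda)^{-1}$ and $(H_\Omega+\lambda)^{-1}$, with the "gluing" done by a cut-off $\varphi$ equal to $1$ outside $\Omega$. The plan is to work purely algebraically, on a suitable dense set of vectors, and to justify each manipulation using the operator-theoretic facts already in hand: $H_\Omega$ is pseudo-self-adjoint with $D(H_\Omega)\subset L^2(\Omega)$ and $HD(H_\Omega)\subset L^2(\R^d)$ (Section~6), and the commutator $[H,\varphi]$ extends to a bounded operator on $L^2(\R^d)$ by Proposition~\ref{suppo}(a), since $\varphi\in BC^\infty(\R^d)$ with $\partial^\alpha\varphi$ of compact support for $|\alpha|=d+2$ (indeed $\varphi-1$ is supported in $\overline\Omega$, which is bounded).

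First I would fix $g\in L^2(\R^d)$, set $v:=(H_\Omega+\lambda)^{-1}g\in D(H_\Omega)\subset L^2(\Omega)$, and compute $(H+\lambda)v$. The key point is that on $D(H_\Omega)$ one has $H_\Omega v = Hv$ as elements of $L^2(\R^d)$ (this is the content of $HD(H_\Omega)\subset L^2$ together with the definition of $h_\Omega$ as the restriction of $h$), so $(H+\lambda)v = g + (H+\lambda)v - (H_\Omega+\lambda)v$; but since $v\in L^2(\Omega)$ the spurious term involving $\boldsymbol 1_{\Omega^c}$ vanishes, and more care is needed here — the honest statement is that $(H+\lambda)v - g$ lies in $L^2(\Omega^c)$-type correction, which is precisely what the cut-off will absorb. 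Concretely I would write $\varphi v$ and use that $\varphi = 1$ on a neighbourhood of $\mathrm{supp}\,v$... no: rather, since $v\in L^2(\Omega)$ and $\varphi=1$ on $\Omega^c$ gives no immediate simplification of $\varphi v$, the right move is instead to expand $(H+\lambda)\big[\varphi v + [H,\varphi](H_\Omega+\lambda)^{-1}(\text{something})\big]$ and watch the commutator terms telescope. The cleanest route: apply $(H+\lambda)^{-1}$ to the operator identity
$$
H\varphi - \varphi H_\Omega = [H,\varphi] + \varphi(H-H_\Omega)
$$
read on $D(H_\Omega)$, where $\varphi(H-H_\Omega)$ acting on $D(H_\Omega)$ is controlled because $(H-H_\Omega)v$ is supported off $\Omega$ and $\varphi$ there equals $1$, while combined with the $\boldsymbol 1_\Omega$ structure $v$ itself forces things to collapse. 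I would then multiply through by $(H+\lambda)^{-1}$ on the left and $(H_\Omega+\lambda)^{-1}$ on the right to turn the $H$-factors into $1-\lambda(\cdot+\lambda)^{-1}$ pieces, and collect terms; the first displayed equality in \eqref{7.1} should emerge after cancelling the common $\lambda(H+\lambda)^{-1}(H_\Omega+\lambda)^{-1}$ contributions.

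For the second equality in \eqref{7.1} I would run the symmetric argument: take the adjoint relation (or equivalently start from $(H_\Omega+\lambda)^{-1}$ on the right), using that $(H+\lambda)^{-1}$ is self-adjoint and $[H,\varphi]^* = -[H,\varphi]$ up to the bounded-extension identification, together with $\overline\varphi = \varphi$ since $\varphi$ is real. So the two halves of \eqref{7.1} are mirror images and only one genuine computation is needed. The main obstacle I anticipate is not the algebra but its justification: $H_\Omega$ is only pseudo-self-adjoint on $L^2(\R^d)$ (genuinely self-adjoint only on $L^2(\Omega)$), so every appearance of "$H_\Omega v = Hv$" and every domain assertion ($\varphi v \in D(H)$? $[H,\varphi]$ applied where?) must be checked against the precise statements $D(H_\Omega)\subset L^2(\Omega)$, $HD(H_\Omega)\subset L^2(\R^d)$, and $f(H_\Omega) = \boldsymbol 1_\Omega f(H_\Omega) = f(H_\Omega)\boldsymbol 1_\Omega$ from \eqref{ploch}. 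I would therefore carry out the whole computation applied to a vector $g\in L^2(\R^d)$ and only at the end pass back to the operator identity, invoking boundedness of all operators involved (the resolvents, $\varphi$, and the bounded extension of $[H,\varphi]$ from Proposition~\ref{suppo}) to conclude that the identity holds on all of $L^2(\R^d)$.
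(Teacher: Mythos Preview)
Your plan differs substantially from the paper's argument, and the obstacle you yourself flag is exactly where it breaks down.

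The paper never works directly with $H_\Omega$. It uses the approximants $H_n:=H+n\boldsymbol{1}_{\Omega^c}$ from Section~\ref{brasagain}, for which $D(H_n)=D(H)$ and $[H_n,\varphi]=[H,\varphi]$ (because $\boldsymbol{1}_{\Omega^c}\varphi=\boldsymbol{1}_{\Omega^c}$). At the $H_n$ level every domain question is trivial. One starts from the second resolvent identity
\[
(H+\lambda)^{-1}-(H_n+\lambda)^{-1}=(H+\lambda)^{-1}\,n\boldsymbol{1}_{\Omega^c}\,(H_n+\lambda)^{-1},
\]
inserts $\varphi$ via $n\boldsymbol{1}_{\Omega^c}=n\boldsymbol{1}_{\Omega^c}\varphi$, commutes $\varphi$ past $(H_n+\lambda)^{-1}$ using \eqref{4.17} for $H_n$, and recognises the resolvent difference again. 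This yields \eqref{7.1} with $H_n$ in place of $H_\Omega$; then one passes to the strong limit $n\to\infty$ using $s\text{-}\lim(H_n+\lambda)^{-1}=(H_\Omega+\lambda)^{-1}$ and uniform boundedness. The second equality is obtained the same way.

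Your direct route hinges on ``$H_\Omega v=Hv$ for $v\in D(H_\Omega)$'', i.e.\ $D(H_\Omega)\subset D(H)$. This is neither proved in the paper nor obvious: membership in $D(H_\Omega)$ is tested only against $D(h)\cap L^2(\Omega)$, a strict subspace of $D(h)$, and since $\Op(p)$ is non-local, $u\in L^2(\Omega)\cap D(h)$ does not force $\Op(p)u+Vu\in L^2(\R^d)$ (cf.\ Corollary~\ref{hypo}). The sentence ``$HD(H_\Omega)\subset L^2(\R^d)$'' in Section~\ref{brasagain} should not be read as supplying this inclusion. Without it, your commutator algebra on $D(H_\Omega)$ cannot be justified, and any attempt to establish the inclusion would likely pass through the same monotone-form approximation that the paper uses. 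The detour through $H_n$ is precisely what makes the proof work.
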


\begin{proof}
The function $\varphi$ verifies the assumptions of Proposition 4.5, so the operator of multiplication by $\varphi$
leaves $D(H)$ invariant and $[H,\varphi]\in B[\l]$. Using (\ref{4.17}) for $H_n$ and the equality
$$[H_n,\varphi]=[H,\varphi],\ \ \ \forall\,n\ge 1,$$
where $H_n:=H+n\boldsymbol{1}_{\Omega^c}$, we deduce that
$$
  (H+\lambda)^{-1} -(H_n+\lambda)^{-1}=(H+\lambda)^{-1} n\boldsymbol{1}_{\Omega^c}\cdot\varphi(H_n+\lambda)^{-1} =
$$
$$
  =(H+\lambda)^{-1}n\boldsymbol{1}_{\Omega^c}(H_n+\lambda)^{-1}\varphi + (H+\lambda)^{-1}n\boldsymbol{1}_{\Omega^c}
  (H_n+\lambda)^{-1}[H,\varphi](H_n+\lambda)^{-1}=
$$
$$
   =\left[(H+\lambda)^{-1} -(H_n+\lambda)^{-1}\right]\left[\varphi +[H,\varphi\right](H_n+\lambda)^{-1}].
$$
The first equality in (\ref{7.1}) follows from the formula above in the limit $n\to\infty$, taking into account
 the relation
 $$s-\underset{n\to\infty}{\lim}(H_n+\lambda)^{-1}=(H_\Omega+\lambda)^{-1}$$
and the fact that the sequence  $\{(H_n+\lambda)^{-1}\}_{n\ge 1}$ is bounded in $B[\l]$.

 The second equality in (\ref{7.1}) follows in the same way.
\end{proof}

The next Proposition is basic for proving Theorem 1.1

\begin{proposition}\label{again}
We assume again (i) and (ii'). For any $\lambda\ge\lambda_0$ and $m\in\N,\ m\ge 4r_0$ ($\lambda _0$ and $r_0$ as in
Proposition 5.1), there exists $C>0$ such that for any bounded open subset $\Omega$ of $\RR$
\begin{equation}\label{7.2}
\|\boldsymbol{1}_\Omega (H+\lambda)^{-m}\boldsymbol{1}_\Omega-(H_\Omega+\lambda)^{-m}\|_{\mathcal I_1}\le C|\Omega|^{1/2}
|\widetilde\Omega|^{1/2},
\end{equation}
where $\widetilde\Omega:=\{x\in\RR\mid {\rm dist}(x,\partial\Omega)< 1\}$.
\end{proposition}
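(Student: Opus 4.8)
The plan is to localize the difference near $\partial\Omega$ by conjugating with a cut-off. Fix $\psi\in BC^\infty(\RR)$ with $\psi=0$ on $\Omega^c$, $\psi=1$ on $\{x:{\rm dist}(x,\Omega^c)\ge 1\}$, $\|\partial^\alpha\psi\|_{L^\infty(\RR)}\le C_\alpha$ and ${\rm supp}\,\partial^\alpha\psi\subset\widetilde\Omega$ for all $|\alpha|\ge 1$, with $C_\alpha$ independent of $\Omega$ (a fixed smooth profile composed with a mollification of ${\rm dist}(\cdot,\Omega^c)$ does the job). Set $\phi:=\boldsymbol 1_\Omega-\psi$, supported in $\Omega\cap\widetilde\Omega$, and $R:=(H+\lambda)^{-1}$, $R_\Omega:=(H_\Omega+\lambda)^{-1}$. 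Since $R_\Omega=\boldsymbol 1_\Omega R_\Omega=R_\Omega\boldsymbol 1_\Omega$, one has $(H_\Omega+\lambda)^{-m}=\boldsymbol 1_\Omega R_\Omega^m\boldsymbol 1_\Omega$, hence
$$
\boldsymbol 1_\Omega R^m\boldsymbol 1_\Omega-(H_\Omega+\lambda)^{-m}=\bigl(\psi R^m\psi-R_\Omega^m\bigr)+\psi R^m\phi+\phi R^m\psi+\phi R^m\phi .
$$
The three $\phi$-terms are already of the required type: since $m\ge 4r_0$, writing e.g.\ $\phi R^m\phi=(\phi R^{r_0})\,R^{m-2r_0}\,(R^{r_0}\phi)$, Proposition \ref{unu} gives $\|\phi R^{r_0}\|_{\mathcal I_2}\le C|\Omega\cap\widetilde\Omega|^{1/2}$ and (as $R=R^*$) $\|R^{r_0}\phi\|_{\mathcal I_2}\le C|\Omega\cap\widetilde\Omega|^{1/2}$, so $\|\phi R^m\phi\|_{\mathcal I_1}\le C|\Omega\cap\widetilde\Omega|\le C|\Omega|^{1/2}|\widetilde\Omega|^{1/2}$; the cross-terms are handled the same way, using $\psi=\boldsymbol 1_\Omega\psi$ and $\|\boldsymbol 1_\Omega R^{r_0}\|_{\mathcal I_2}\le C|\Omega|^{1/2}$ at the $\psi$-end.

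For $\psi R^m\psi-R_\Omega^m$ I would use the geometric resolvent identities
$$
\psi R=R_\Omega\psi+R_\Omega[H,\psi]R ,\qquad R\psi=\psi R_\Omega-R[H,\psi]R_\Omega ,
$$
which follow from Lemma \ref{sum}; equivalently, since $\psi\boldsymbol 1_{\Omega^c}=0$ and $[H_n,\psi]=[H,\psi]$ one has $(H_n+\lambda)\psi=\psi(H+\lambda)+[H,\psi]$, whence $\psi(H+\lambda)^{-1}=(H_n+\lambda)^{-1}\psi+(H_n+\lambda)^{-1}[H,\psi](H+\lambda)^{-1}$, and letting $n\to\infty$ gives the first identity, the second being its adjoint ($[H,\psi]^*=-[H,\psi]$). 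Split $m=p+q$ with $p,q\ge 2r_0$. Peeling $p$ factors $R$ off the left with the first identity and then $q$ factors off the right with the second, and observing that $\psi^2=1+\rho$ modulo a term supported in $\Omega^c$ (which $R_\Omega$ annihilates), with $\rho$ supported in $\widetilde\Omega$, one obtains
$$
\psi R^m\psi=R_\Omega^m+R_\Omega^p\rho R_\Omega^q+\sum_{i=1}^{p}R_\Omega^{\,i}[H,\psi]R^{\,m-i+1}\psi-\sum_{i=1}^{q}R_\Omega^{\,p}\psi R^{\,m-p-i+1}[H,\psi]R_\Omega^{\,i}.
$$
Each term on the right I would bound in $\mathcal I_1$ by $C|\Omega|^{1/2}|\widetilde\Omega|^{1/2}$ by exhibiting it as a product of bounded operators and two Hilbert-Schmidt factors: one of norm $\le C|\Omega|^{1/2}$, a block $R_\Omega^{r_0}$ or $R^{r_0}\boldsymbol 1_\Omega$ (from Propositions \ref{unu}, \ref{gr} and Corollary \ref{grr}), and one of norm $\le C|\widetilde\Omega|^{1/2}$, one of $T$, $T'$, $\boldsymbol 1_{\widetilde\Omega}R_\Omega^{r_0}$, $R^{r_0}\boldsymbol 1_{\widetilde\Omega}$ (from Proposition \ref{suppo} and Propositions \ref{unu}, \ref{gr}). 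Here Proposition \ref{suppo} furnishes $[H,\psi]=\sum_{1\le|\alpha|\le d+1}(\partial^\alpha\psi)S_\alpha+T=\sum_{1\le|\alpha|\le d+1}S'_\alpha(\partial^\alpha\psi)+T'$ with $\|T\|_{\mathcal I_2},\|T'\|_{\mathcal I_2}\le C|\widetilde\Omega|^{1/2}$; one uses the first representation when $[H,\psi]$ is preceded by a block of at least $r_0$ resolvent factors, the second when it is followed by one, so that the $\widetilde\Omega$-localization carried by the multiplications $\partial^\alpha\psi$ (or by $T$, $T'$) abuts a power $R^{r_0}$ or $R_\Omega^{r_0}$; the other Hilbert-Schmidt factor comes from the far block $R_\Omega^p$ (right-peeled terms) or from $R^{m-i+1}\psi=R^{m-i+1}\boldsymbol 1_\Omega\psi$ (left-peeled terms), and $R_\Omega^p\rho R_\Omega^q$ is treated likewise. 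Since $p,q\ge 2r_0$, in every one of the $O(m)$ terms the side carrying the localization retains $\ge r_0$ resolvent factors adjacent to it and the opposite side $\ge r_0$ more, so these splittings are always available; summing the finitely many pieces proves \eqref{7.2}.

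The main obstacle is exactly this bookkeeping: after the telescoping, the commutator $[H,\psi]$ must in every term be flanked on the side of its $\widetilde\Omega$-cut-off by at least $r_0$ resolvent factors (to invoke the Hilbert-Schmidt estimates of Propositions \ref{unu} and \ref{gr} and thereby gain the factor $|\widetilde\Omega|^{1/2}$) while at least $r_0$ further resolvent factors survive on the other side (to gain $|\Omega|^{1/2}$); keeping this valid down to the extreme terms ($i$ close to $1$ and $i$ close to $p$, resp.\ $q$) is what forces the hypothesis $m\ge 4r_0$, as the symmetric peeling strips up to about $m/2$ factors from each wing and must still leave $2r_0$ in place.
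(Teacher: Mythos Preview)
Your argument is correct and rests on the same toolkit as the paper (the commutator decomposition of Proposition~\ref{suppo} and the Hilbert--Schmidt bounds of Propositions~\ref{unu} and~\ref{gr}), but the organization is genuinely different. The paper telescopes the difference directly,
\[
\boldsymbol 1_\Omega R^m\boldsymbol 1_\Omega-R_\Omega^m=\sum_{j=0}^{m-1}\boldsymbol 1_\Omega R^{\,m-1-j}(R-R_\Omega)R_\Omega^{\,j}\boldsymbol 1_\Omega,
\]
and then replaces $R-R_\Omega$ by the identities of Lemma~\ref{sum}, using a cut-off $\varphi$ equal to $1$ on $\Omega^c$; the case split $j\ge 2r_0$ versus $j<2r_0$ decides which of the two identities in~\eqref{7.1} to apply. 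You instead work with the complementary cut-off $\psi=0$ on $\Omega^c$, first strip off the boundary piece $\phi=\boldsymbol 1_\Omega-\psi$ (whose contributions are trivially small), and then perform a two-sided ``geometric resolvent'' peeling of $\psi R^m\psi$ via the intertwining relation $\psi R=R_\Omega\psi+R_\Omega[H,\psi]R$. What your route buys is that the $\widetilde\Omega$-localization is carried directly by $[H,\psi]$ and $\rho=\psi^2-\boldsymbol 1_\Omega$, so no auxiliary cut-off $\psi$ (playing the role of the paper's second function in the estimate of $\widetilde E'_j$) is needed; what the paper's route buys is a single telescoping sum rather than a left-then-right peel and a cleaner separation into only two regimes of $j$. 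Both arguments need $m\ge 4r_0$ for exactly the reason you identify: in the extreme terms of the telescoping one must still have $\ge r_0$ resolvent factors on each side of the localized commutator.
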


\begin{proof}
We use (6.4) and infer that
\begin{equation}\label{7.3}
  \boldsymbol{1}_\Omega (H+\lambda)^{-m}\boldsymbol{1}_\Omega - (H_\Omega+\lambda)^{-m}=\underset{0\le j\le m-1}{\sum}
 \boldsymbol{1}_\Omega (H+\lambda)^{j-m+1}\left[(H+\lambda)^{-1}-(H_\Omega+\lambda)^{-1}\right](H_\Omega+\lambda)^{-j}
  \boldsymbol{1}_\Omega.
\end{equation}
We denote by $E_j$ the general term of the sum. Let $\Omega\subset\RR$ bounded and open. By taking the convolution
of the characteristic function of
a neighborhood of $\overline{\Omega^c}$ by a function from $\c$ with the support included in a small neighborhood
of the origin, one constructs a real function $\varphi\in BC^\infty (\RR)$ such that
$0\le\varphi\le 1$, $\varphi=1\
{\rm on}\ \Omega^c$, $\varphi = 0\ {\rm on}\ \Omega\setminus\widetilde \Omega$
and such that
$\|\partial^\alpha\varphi\|_{L^2(\R^d)}\le C_\alpha,\ \ \forall\alpha\in\mathbb{N}^d,$
with $C_\alpha$ independent of $\Omega$.

We estimate first the $\Y_1$-norm of $E_j$ for $2r_0\le j\le m-1$. We use the first equality form (\ref{7.1}) and
write
$E_j=E'_j+E''_j,$
where $E_j'$ and $E''_j$ correspond to the two terms of the sum $\varphi+[H,\varphi]$.
We have
$$
  \|E'_j\|_{\Y_1}=\|\boldsymbol{1}_\Omega (H+\lambda)^{j-m+1}\left[(H+\lambda)^{-1}-(H_\Omega+\lambda)^{-1}\right]\varphi
  (H_\Omega+\lambda)^{-j}\boldsymbol{1}_\Omega\|_{\Y_1}\le
$$
$$
\le  \|\boldsymbol{1}_\Omega (H+\lambda)^{j-m+1}\left[(H+\lambda)^{-1}-(H_\Omega+\lambda)^{-1}\right]\|_{B[L^2(\Omega)]}
\|\boldsymbol{1}_U (H+\lambda)^{-j/2}\|_{\Y_2}\|(H_\Omega+\lambda)^{-j/2}\boldsymbol{1}_\Omega\|_{\Y_2},
$$
where $U:=\Omega^c\cup\widetilde\Omega$. Using (6.9) and Proposition 6.2 we get
\begin{equation}\label{7.4}
\|E'_j\|_{\Y_1}\le C_1 |\widetilde\Omega|^{1/2}|\Omega|^{1/2}
\end{equation}
for some positive constant $C_1$, independent of $\Omega$.
To estimate the $\Y_1$ norm of $E''_j$, we write it as
$$E''_j=\underset{1\le |\alpha|\le d+1}{\sum}E''_{j,\alpha}+E''_{j,0},$$
where the terms $E''_{j,\alpha}$ and $E''_{j,0}$ correspond to the decomposition of $[H,\varphi]$ in the
second of the inequalities (\ref{4.16}). Using Propositions 4.5 and 6.2 we obtain inequalities, in which
the constants are independent of $\Omega$:
$$
  \|E''_{j,\alpha}\|_{\Y_1}=\|\boldsymbol{1}_\Omega (H+\lambda)^{j-m+1}\left[(H+\lambda)^{-1}-(H_\Omega+\lambda)^{-1}\right]
  S'_\alpha (\partial^\alpha\varphi)(H_\Omega+\lambda)^{-j-1}\boldsymbol{1}_\Omega\|_{\Y_1}\le
$$
$$
  \le  \|\boldsymbol{1}_\Omega (H+\lambda)^{j-m+1}\left[(H+\lambda)^{-1}-(H_\Omega+\lambda)^{-1}\right]
  S'_\alpha (\partial^\alpha\varphi)\|_{B[\l]}\,\cdot
$$
$$
\cdot\|\boldsymbol{1}_{\widetilde\Omega}(H_\Omega+\lambda)^{-j/2}\|_{\Y_2}\,\|(H_\Omega+\lambda)^{-j/2-1}
\boldsymbol{1}_\Omega\|_{\Y_2}\le
$$
$$
\le C'|\widetilde\Omega|^{1/2}|\Omega|^{1/2},\ \ \ \ \ 1\le |\alpha|\le d+1
$$
and
$$
  \|E''_{j,0}\|_{\Y_1}=\|\boldsymbol{1}_\Omega (H+\lambda)^{j-m+1}\left[(H+\lambda)^{-1}-(H_\Omega+\lambda)^{-1}\right]T'
  (H_\Omega+\lambda)^{-j-1} \boldsymbol{1}_\Omega\|_{\Y_1}\le
$$
$$
  \le\|\boldsymbol{1}_\Omega (H+\lambda)^{j-m+1}\left[(H+\lambda)^{-1}-(H_\Omega+\lambda)^{-1}\right]\|_{B[\l]}\,\|T'\|_{\Y_2}
  \|(H_\Omega+\lambda)^{-j-1}\boldsymbol{1}_\Omega\|_{\Y_2}\le
$$
$$
  \le C''|\widetilde\Omega|^{1/2}|\Omega|^{1/2}.
$$
Thus we have
\begin{equation}\label{7.5}
\|E''_j\|_{\Y_1}\le  C_2|\widetilde\Omega|^{1/2}|\Omega|^{1/2}.
\end{equation}
Taking (\ref{7.4}) into account we get
\begin{equation}\label{7.6}
\|E_j\|_{\Y_1}\le  C|\Omega|^{1/2}|\widetilde\Omega|^{1/2},\ \ \ 2r_0\le j\le m-1,
\end{equation}
for some constant $C>0$ independent of $\Omega$.

Let us assume now that $0\le j\le 2r_0-1$; then $m-j-1\ge 2r_0$. We use now the second equality in (\ref{7.1}) to
write
$E_j=\widetilde E'_j+\widetilde E''_j$
where, as before, $\widetilde E'_j$ and $\widetilde E''_j$
correspond to the two terms in the sum $\varphi+[H,\varphi]$. The $\Y_1$-norm of $\widetilde E''_j$ is estimated
as above, writing
$$\widetilde E''_j=\underset{1\le |\alpha|\le d+1}{\sum}\widetilde E''_{j,\alpha}+\widetilde E''_{j,0},$$
where the terms $\widetilde E''_{j,\alpha}$ and $\widetilde E''_{j,0}$ correspond to the decomposition of
$[H,\varphi]$ in the first equality in (\ref{4.16}). By (6.9) and Propositions 4.5 and 5.1, we get for $\Omega$-
independent constants
$$
  \|\widetilde E''_{j,\alpha}\|_{\Y_1}= \|\boldsymbol{1}_\Omega (H+\lambda)^{j-m}(\partial^\alpha\varphi)S_\alpha
\left[(H+\lambda)^{-1}-(H_\Omega+\lambda)^{-1}\right](H_\Omega+\lambda)^{-j}\boldsymbol{1}_\Omega\|_{\Y_1}\le
$$
$$B[\l]
\le\|\boldsymbol{1}_\Omega (H+\lambda)^{\frac{j-m}{2}}\|_{\Y_2}\,\|(H+\lambda)^{\frac{j-m}{2}}
\boldsymbol{1}_{\widetilde\Omega}\|_{\Y_2}\,\times
$$
$$
\times\|(\partial^\alpha\varphi)S_\alpha \left[(H+\lambda)^{-1}-
(H_\Omega+\lambda)^{-1}\right](H_\Omega+\lambda)^{-j}\boldsymbol{1}_\Omega\|_{B[\l]}\le
$$
$$
\le C'|\Omega|^{1/2}|\widetilde\Omega|^{1/2},\ \ \ \ 1\le |\alpha|\le d+1
$$
and
$$
 \|\widetilde E''_{j,0}\|_{\Y_1}=\|\boldsymbol{1}_\Omega (H+\lambda)^{j-m}T\left[(H+\lambda)^{-1}-(H_\Omega+\lambda)^{-1}\right]
(H_\Omega+\lambda)^{-j} \boldsymbol{1}_\Omega\|_{\Y_1}\le
$$
$$
\le \|\boldsymbol{1}_\Omega (H+\lambda)^{j-m}\|_{\Y_2}\,\|T\|_{\Y_2}\|\left[(H+\lambda)^{-1}-(H_\Omega+\lambda)^{-1}\right]
(H_\Omega+\lambda)^{-j}\boldsymbol{1}_\Omega\|_{B[\l]}\le
$$
$$
\le C''|\Omega|^{1/2}|\widetilde\Omega|^{1/2}.
$$
So we have
\begin{equation}\label{7.7}
   \|\widetilde E''_j\|_{\Y_1}\le C|\Omega|^{1/2}|\widetilde\Omega|^{1/2}.
\end{equation}
To estimate the $\Y_1$-norm of $\widetilde E'_j$, we introduce another auxiliary real function
$$\psi\in\c,\ 0\le\psi\le 1,
\ {\rm supp}\,\psi\subset\Omega\cup\widetilde\Omega,\ \psi=1\ {\rm in\ a\ neighborhood\ of}\ \overline\Omega,$$
such that for any  $\alpha\in\N^d$ one has
$\|\partial^\alpha \psi\|_{L^\infty (\RR)}\le C_\alpha,$
with $C_\alpha$ independent of $\Omega$. We have
$$
  \|\tilde{E}'_j\|_{\Y_1}=\|\boldsymbol{1}_\Omega (H+\lambda)^{j-m+1}\,\varphi[(H+\lambda)^{-1}-(H_\Omega+\lambda)^{-1}]
  (H_\Omega+\lambda)^{-j}\boldsymbol{1}_\Omega\|_{\Y_1}\le
$$
$$
\le  C\|\boldsymbol{1}_\Omega (H+\lambda)^{j-m+1}\,\varphi (H+\lambda)^{-1}\psi\|_{\Y_1}+
C\|\boldsymbol{1}_\Omega (H+\lambda)^{j-m+1}\varphi\psi\|_{\Y_1},
$$
 where we used the fact that $\psi\boldsymbol{1}_\Omega=\boldsymbol{1}_\Omega$  and we denoted by $C$ various constants
 independent of $\Omega$.

 Since ${\rm supp}\,(\varphi\psi)\subset\widetilde\Omega$, using Proposition 5.1 as above, we get
 \begin{equation}\label{7.8}
    \|\boldsymbol{1}_\Omega (H+\lambda)^{j-m+1}\,\varphi \psi\|_{\Y_1}\le C|\Omega|^{1/2}|\widetilde\Omega|^{1/2}.
 \end{equation}
 For the last term that has to be estimated we use Proposition 4.5 and write
 $$
  \varphi (H+\lambda)^{-1}=(H+\lambda)^{-1}\varphi -\underset{1\le |\alpha|\le d+1}{\sum}
(H+\lambda)^{-1}(\partial^\alpha\varphi)S_\alpha (H+\lambda)^{-1}-(H+\lambda)^{-1}T(H+\lambda)^{-1}.
 $$
 One gets immediately the inequalities
 $$
  \|\boldsymbol{1}_\Omega (H+\lambda)^{j-m}\,\varphi \psi\|_{\Y_1} \le C |\Omega|^{1/2}|\widetilde\Omega|^{1/2},
 $$
 $$
   \|\boldsymbol{1}_\Omega (H+\lambda)^{j-m}\,(\partial^\alpha\varphi)S_\alpha (H+\lambda)^{-1} \psi\|_{\Y_1}
   \le C |\Omega|^{1/2}|\widetilde\Omega|^{1/2}
 $$
 and
 $$
  \|\boldsymbol{1}_\Omega (H+\lambda)^{j-m}\,T(H+\lambda)^{-1} \psi\|_{\Y_1} \le C |\Omega|^{1/2}
  |\widetilde\Omega|^{1/2},
 $$
 which gives
 \begin{equation}\label{7.9}
   \|\boldsymbol{1}_\Omega (H+\lambda)^{j-m+1}\,\varphi  (H+\lambda)^{-1}\psi\|_{\Y_1} \le C |\Omega|^{1/2}
   |\widetilde\Omega|^{1/2}.
 \end{equation}
 From (\ref{7.8}) and (\ref{7.9}) we obtain
 \begin{equation}\label{7.10}
     \|\widetilde E'_j\|_{\Y_1} \le C |\Omega|^{1/2}|\widetilde\Omega|^{1/2}
 \end{equation}
 which, together with (7.7), implies the inequality
  \begin{equation}\label{7.11}
  \|E_j\|_{\Y_1}\le C |\Omega|^{1/2}|\widetilde\Omega|^{1/2},\ \ \ \ 0\le j\le 2r_0+1.
  \end{equation}
 The relation (7.2) follows from (\ref{7.3}), (\ref{7.6}) and (\ref{7.11}).
\end{proof}

Theorem 1.1 is a direct consequence of the next Proposition:
\begin{proposition}\label{now}
Now we assume that the hypothesis (i), (ii'), (iii) and (iv) are fulfilled. Then for any $f\in C_0 (\R)$ and
$\epsilon >0$, there exists $m_0\in \N^*$ such that
\begin{equation}\label{7.12}
  |{\rm tr}[\boldsymbol{1}_\Omega f(H)\boldsymbol{1}_\Omega]-{\rm tr}f(H_\Omega)|\le\epsilon |\Omega|
\end{equation}
for any $\Omega\in\F$ with $B(0,m_0)\subset\Omega$.
\end{proposition}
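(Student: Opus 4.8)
The plan is to deduce \eqref{7.12} from the trace‑norm comparison of resolvent powers in Proposition~\ref{again}, reducing the case of a general $f\in C_0(\R)$ to that of resolvent powers by a Weierstrass approximation. I will fix once and for all a $\lambda\ge\lambda_0$ so large that $-\lambda<\inf{\rm supp}\,f$ (in particular $-\lambda<\inf\sigma(H)$), keep $r_0=d+1$ as in Proposition~\ref{unu}, and set $b:=(\inf\sigma(H)+\lambda)^{-1}$, which bounds the norms of $(H+\lambda)^{-1}$ and of $(H_\Omega+\lambda)^{-1}$ uniformly in $\Omega$ (the latter understood as in Section~6, i.e.\ the zero operator on $L^2(\Omega)^\perp$). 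Because ${\rm supp}\,f$ is compact and contained in $(-\lambda,\infty)$, the function $\psi(x):=x^{-4r_0}f(x^{-1}-\lambda)$ for $0<x\le b$, together with $\psi(0):=0$, is continuous on $[0,b]$ (it vanishes on a neighbourhood of $0$), and functional calculus gives $f(H)=(H+\lambda)^{-4r_0}\,\psi\big((H+\lambda)^{-1}\big)$ and likewise for $H_\Omega$. Next, by the Weierstrass theorem I choose polynomials $P_n$ with $\delta_n:=\sup_{[0,b]}|\psi-P_n|\to 0$ and put
$$
q_n(t):=(t+\lambda)^{-4r_0}P_n\big((t+\lambda)^{-1}\big)=\sum_{4r_0\le k\le N_n}c_k^{(n)}(t+\lambda)^{-k},\qquad h_n:=f-q_n,
$$
a finite sum whose powers are all $\ge 4r_0$; note that $h_n(H)=(H+\lambda)^{-2r_0}(\psi-P_n)\big((H+\lambda)^{-1}\big)(H+\lambda)^{-2r_0}$, and the analogue for $H_\Omega$.

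By \eqref{ploch}, $\boldsymbol{1}_\Omega f(H)\boldsymbol{1}_\Omega-f(H_\Omega)$ has trace equal to the quantity in \eqref{7.12}, so it suffices to bound its $\mathcal I_1$‑norm, and I will do so through the splitting
$$
\boldsymbol{1}_\Omega f(H)\boldsymbol{1}_\Omega-f(H_\Omega)=\big[\boldsymbol{1}_\Omega h_n(H)\boldsymbol{1}_\Omega-h_n(H_\Omega)\big]+\big[\boldsymbol{1}_\Omega q_n(H)\boldsymbol{1}_\Omega-q_n(H_\Omega)\big].
$$
For the second bracket, linearity and Proposition~\ref{again} (applicable since every power $k$ is $\ge 4r_0$) give $\|\boldsymbol{1}_\Omega q_n(H)\boldsymbol{1}_\Omega-q_n(H_\Omega)\|_{\mathcal I_1}\le K_n\,|\Omega|^{1/2}|\widetilde\Omega|^{1/2}$, with $K_n:=\sum_k|c_k^{(n)}|\,C_k$ and $\widetilde\Omega$ as in that proposition. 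For the first bracket I use the factorization of $h_n$: since $\|(\psi-P_n)\big((H+\lambda)^{-1}\big)\|_{B(\l)}\le\delta_n$, Proposition~\ref{unu} with $r=2r_0\ge r_0$ yields
$$
\|\boldsymbol{1}_\Omega h_n(H)\boldsymbol{1}_\Omega\|_{\mathcal I_1}\le\|\boldsymbol{1}_\Omega(H+\lambda)^{-2r_0}\|_{\mathcal I_2}\,\delta_n\,\|(H+\lambda)^{-2r_0}\boldsymbol{1}_\Omega\|_{\mathcal I_2}\le C^2\delta_n|\Omega|,
$$
and, using \eqref{ploch} to write $(H_\Omega+\lambda)^{-2r_0}=\boldsymbol{1}_\Omega(H_\Omega+\lambda)^{-2r_0}=(H_\Omega+\lambda)^{-2r_0}\boldsymbol{1}_\Omega$ and Proposition~\ref{gr} with $U=\Omega$, the same computation gives $\|h_n(H_\Omega)\|_{\mathcal I_1}\le C^2\delta_n|\Omega|$.

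Finally I assemble the estimates, taking the two limits in the right order. Given $\epsilon>0$, I first choose $n$ with $2C^2\delta_n<\epsilon/2$; this fixes $q_n$ and hence the constant $K_n$. Then Hypothesis~(iv) provides $m_0\in\N^*$ such that $B(0;m_0)\subset\Omega$, $\Omega\in\F$, forces $|\widetilde\Omega|<\big(\epsilon/(2K_n)\big)^2|\Omega|$, whence $K_n|\Omega|^{1/2}|\widetilde\Omega|^{1/2}<(\epsilon/2)|\Omega|$. Adding the three bounds gives $\|\boldsymbol{1}_\Omega f(H)\boldsymbol{1}_\Omega-f(H_\Omega)\|_{\mathcal I_1}\le\epsilon|\Omega|$, and \eqref{7.12} follows from $|{\rm tr}\,T|\le\|T\|_{\mathcal I_1}$. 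I expect the main obstacle to be precisely the approximation term $\boldsymbol{1}_\Omega h_n(H)\boldsymbol{1}_\Omega$, and it is the reason the two limits cannot be interchanged: it cannot be controlled merely by $\|h_n\|_\infty$ times a trace norm, and the argument rests on the observation that, because ${\rm supp}\,f$ is bounded away from $-\lambda$, the remainder $h_n$ genuinely factors through $(t+\lambda)^{-4r_0}$, which is what permits the Hilbert--Schmidt~$\times$~bounded~$\times$~Hilbert--Schmidt splitting with a bounded middle factor of norm only $\delta_n$; a direct polynomial approximation of $f$ in powers of $(t+\lambda)^{-1}$ would leave coefficients with no uniform control and break the scheme.
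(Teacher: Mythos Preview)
Your argument is correct and follows the same route as the paper: write $f(t)=(t+\lambda)^{-4r_0}\psi\bigl((t+\lambda)^{-1}\bigr)$, approximate $\psi$ uniformly by polynomials on a compact interval (Weierstrass), apply Proposition~\ref{again} to the resulting finite linear combination of resolvent powers $\ge 4r_0$, and control the remainder by the $C|\Omega|$ trace bounds for $(H+\lambda)^{-4r_0}$ and $(H_\Omega+\lambda)^{-4r_0}$ coming from Propositions~\ref{unu} and~\ref{gr}. The only cosmetic difference is in the handling of the approximation error $h_n=f-q_n$: the paper works with real $f$, uses the pointwise inequality $|f(t)-Q_\epsilon(t)|\le\epsilon(t+a)^{-4r_0}$ to get form inequalities $-\epsilon\,\boldsymbol{1}_\Omega(a+H)^{-4r_0}\boldsymbol{1}_\Omega\le\boldsymbol{1}_\Omega\bigl(f(H)-Q_\epsilon(H)\bigr)\boldsymbol{1}_\Omega\le\epsilon\,\boldsymbol{1}_\Omega(a+H)^{-4r_0}\boldsymbol{1}_\Omega$ (and likewise for $H_\Omega$), and then bounds the \emph{trace} directly via Corollaries~\ref{doi} and~\ref{grr}; you instead bound the $\mathcal I_1$-norm via the $\mathcal I_2\times B\times\mathcal I_2$ factorization, which has the minor advantage of not requiring $f$ to be real.
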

 \begin{proof}
 One uses ideas of \cite{DIM} (see also \cite{I}). Let $\lambda_0$ and $r_0$ the constants from
 Proposition 5.1. We set $a:=\lambda_0+1,\ m_0:=4r_0$. It will be enough to prove (\ref{7.12}) for the real functions
 $f\in C_0(\R)$ such that ${\rm supp}\, f\subset [-a+\frac{1}{2},\infty)$. The functions
$$\left[-a+\frac{1}{2},\infty\right)\ni t\mapsto (a+t)^{m_0} f(t)\in\R$$
and
$$[0,2]\ni\tau\mapsto \tau^{-m_0}f(\tau^{-1}-a)\in\R$$
are continuous. For any $\epsilon>0$ there is a polynomial $P_\epsilon$ with real coefficients such that
$$
  \left|\tau^{-m_0} f(\tau^{-1}-a)-P_\epsilon(\tau)\right|\le\epsilon,\ \ \ \ \forall\,\tau\in [0,2].
$$
Therefore
$$
  \left|(a+t)^{m_0}f(t)-P_\epsilon\left(\frac{1}{a+t}\right)\right|\le\epsilon,\ \ \ \ \forall\,t\ge -a+\frac{1}{2}.
$$
Let
$$Q_\epsilon (t):=(a+t)^{-m_0}P_\epsilon\left(\frac{1}{a+t}\right).$$
Then in form-sense
$$
  -\epsilon (a+H)^{-m_0}\le f(H)-Q_{\epsilon}(H)\le \epsilon (a+H)^{-m_0},
$$
so
$$
  -\epsilon\boldsymbol{1}_\Omega (a+H)^{-m_0}\boldsymbol{1}_\Omega\le \boldsymbol{1}_\Omega f(H)\boldsymbol{1}_\Omega-\boldsymbol{1}_\Omega
  Q_\epsilon (H)\boldsymbol{1}_\Omega\le\epsilon \boldsymbol{1}_\Omega (a+H)^{-m_0}\boldsymbol{1}_\Omega,\ \ \Omega\in\F.
$$
Using Corollaries 5.2 and 5.3 we obtain
\begin{equation}\label{7.13}
\left|{\rm tr} [\boldsymbol{1}_\Omega f(H)\boldsymbol{1}_\Omega]-{\rm tr} [\boldsymbol{1}_\Omega Q_\epsilon (H)\boldsymbol{1}_\Omega]\right|\le\epsilon\,{\rm tr} [\boldsymbol{1}_\Omega (a+H)^{-m_0}\boldsymbol{1}_\Omega]\le C_1\,\epsilon\,|\Omega|,
\end{equation}
where $C_1$ is a constant independent on $\epsilon$ and $\Omega\in\F$.

In the same way, using Corollaries 6.3 and 6.4, one shows that for some constant $C_2$, independent on $\epsilon$
and $\Omega\in\F$, one has
\begin{equation}\label{7.14}
\left|{\rm tr}f(\Omega)-{\rm tr}Q_\epsilon (H_\Omega)\right|\le \epsilon\,{\rm tr}(a+H_\Omega)^{-m_0}\le
C_2\,\epsilon\,|\Omega|.
\end{equation}
Inequality (\ref{7.12}) follows from (\ref{7.13}), (\ref{7.14}), (\ref{7.2}) and hypothesis (iv).

 \end{proof}

\section{Proof of Theorem 1.2}\label{secspectral}

Let us suppose that (i), (ii'), (iii), (iv) and (v) are verified. Since the proof of Theorem 1.2 is very close to that of
Theorem 1.6 from \cite{I}, we shall not indicate all the details.

Let us notice first that the hypothesis (i) and (v), the proofs of Proposition 5.1 and Corollary 5.2 from \cite{I}
show that there exists a constant magnetic field
$$B^0=\frac{1}{2}\underset{1\le j,k\le d}{\sum}B^0_{jk}dx_j\wedge dx_k,\ \ \ B^0_{jk}=-B^0_{kj}\in\R$$
and a vector potential
$$A^{(p)}=\underset{1\le j\le d}{\sum}A^{(p)}_jdx_j,$$
where the components $A^{(p)}_j$ belong to $C^\infty_{\rm pol}(\RR)$ and are
$\Gamma$-periodic, such that
$B-B^0=dA^{(p)}.$
Since $B^0=dA^0 $ with
$$A^0=\underset{1\le j\le d}{\sum}A^0_j
dx_j,\ \ \ A^0_j(x)=\frac{1}{2}\underset{1\le k\le d}{\sum}B^0_{kj}x_k,$$
we have
$B=d(A^{(p)}+A^0),$
so, by Proposition 4.6, we will assume in the sequel that the vector potential defining the
magnetic field $B$ is $A:=A^{(p)}+A^0$.

For $\gamma\in\Gamma$ we define the function $\varphi_\gamma :\RR\to\R$,
$$
\varphi_\gamma (x):=\underset{1\le j\le d}{\sum}A^0_j (\gamma)x_j =\frac{1}{2}\underset{1\le j,k\le d}{\sum}
B^0_{kj}\gamma_k x_j
$$
(so $d\varphi_\gamma =A^0 (\gamma)$)
and the unitary operators of multiplication with $e^{i\varphi_\gamma}$ on $\l$ denoted by
$U_\gamma$. Let us put
$\left(L_\gamma u\right)(x):=u(x-\gamma)$
and
$T_\gamma :=U_\gamma L_\gamma.$
The operators $T_\gamma$ are the magnetic translations \cite{Z}.
\begin{lemma}\label{sup}
Let us suppose that (i), (ii') and (v) are verified. Then the operator $H=H(A;V)$ constructed in Section 4 commutes with
$T_\gamma$, i.e.
\begin{equation}\label{8.1}
HT_\gamma=T_\gamma H,\ \ \ \forall\,\gamma\in\Gamma.
\end{equation}
\end{lemma}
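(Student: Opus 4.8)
The plan is to establish the intertwining relation $H T_\gamma = T_\gamma H$ by first verifying it for the magnetic kinetic part $H_A$ and then for the potential $V$, working at the level of the quadratic forms and using Corollary \ref{hypo} to pass to the operators. The key observation is that the magnetic translation $T_\gamma = U_\gamma L_\gamma$ combines an ordinary translation $L_\gamma$ with multiplication by the phase $e^{i\varphi_\gamma}$, and that $\varphi_\gamma$ is precisely a gauge function: $d\varphi_\gamma = A^0(\gamma)$, a constant $1$-form. So the first step would be to record how $T_\gamma$ acts on the transversal-gauge vector potential $A = A^{(p)} + A^0$. Translating $A$ by $\gamma$ gives a vector potential for the same $\Gamma$-periodic magnetic field $B$ (by hypothesis (v)), so $L_\gamma$ conjugates $H_{A}$ into $H_{A(\cdot+\gamma)}$; since $A(\cdot + \gamma)$ and $A$ differ by an exact form $d\psi$ with $d\psi = A(\cdot+\gamma) - A$, and one checks that modulo a periodic piece this exact part is exactly $d\varphi_\gamma$ (the constant part $A^0$ shifts by the constant $A^0(\gamma)$, while $A^{(p)}$ is periodic), Proposition \ref{4.6} on gauge covariance then absorbs the remaining phase. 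Composing these two conjugations yields $T_\gamma H_A T_\gamma^{-1} = H_A$.

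More concretely, I would argue as follows. First, $L_\gamma \mathfrak{Op}^A(a) L_\gamma^{-1} = \mathfrak{Op}^{A(\cdot+\gamma)}(a)$ for any symbol $a$, which one sees directly from the defining oscillatory integral \eqref{of} by the change of variables $x \mapsto x+\gamma$, $y \mapsto y+\gamma$, noting $\Gamma^A(x+\gamma, y+\gamma) = \Gamma^{A(\cdot+\gamma)}(x,y)$. Next, since $B$ is $\Gamma$-periodic, $A(\cdot+\gamma)$ and $A$ both represent $B$, so their difference is closed, hence exact on $\RR$: $A(\cdot+\gamma) - A = d\varphi_\gamma + (\text{closed periodic part})$. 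With the explicit transversal-gauge choice and the splitting $A = A^{(p)} + A^0$ with $A^{(p)}$ periodic and $A^0$ linear, one computes $A^0(\cdot+\gamma) - A^0 = A^0(\gamma) = d\varphi_\gamma$ identically, while $A^{(p)}(\cdot+\gamma) = A^{(p)}$; so in fact $A(\cdot+\gamma) - A = d\varphi_\gamma$ exactly. Then Proposition \ref{4.6} gives $U_\gamma \mathfrak{Op}^{A}(a) U_\gamma^{-1} = \mathfrak{Op}^{A - d\varphi_\gamma}(a) = \mathfrak{Op}^{A(\cdot+\gamma)^{-1}}$... more precisely, combining the two relations yields $T_\gamma \mathfrak{Op}^A(a) T_\gamma^{-1} = U_\gamma \mathfrak{Op}^{A(\cdot+\gamma)}(a) U_\gamma^{-1} = \mathfrak{Op}^{A(\cdot+\gamma) - d\varphi_\gamma}(a) = \mathfrak{Op}^A(a)$. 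Applying this with $a = p$, $p(\xi) = \langle\xi\rangle - 1$, and taking closures, $T_\gamma H_A T_\gamma^{-1} = H_A$; in particular $T_\gamma$ preserves $D(H_A^{1/2}) = D(h_A)$ and the form $h_A$.

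For the potential, since $V$ is $\Gamma$-periodic and $L_\gamma$ commutes with multiplication by any periodic function while $U_\gamma$ (multiplication by a phase) commutes with multiplication by $V$ as well, one has $T_\gamma V T_\gamma^{-1} = V(\cdot + \gamma) = V$, i.e. $T_\gamma$ commutes with the forms $\qvp$ and $\qvm$ and preserves their domains. Combining with the previous paragraph, $T_\gamma$ preserves $D(h) = D(h_A) \cap D(\qvp)$ and $h(T_\gamma u, T_\gamma v) = h(u,v)$ for all $u,v \in D(h)$. Since $T_\gamma$ is unitary, this form invariance is equivalent to $T_\gamma H T_\gamma^{-1} = H$, which is \eqref{8.1}. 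Alternatively, using Corollary \ref{hypo}: for $u \in D(H)$ we have $\mathfrak{Op}^A(p)u + Vu \in \l$, and applying $T_\gamma$ together with $T_\gamma(\mathfrak{Op}^A(p)u) = \mathfrak{Op}^A(p)(T_\gamma u)$ and $T_\gamma(Vu) = V(T_\gamma u)$ shows $T_\gamma u \in D(H)$ with $H T_\gamma u = T_\gamma H u$.

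The main obstacle is the gauge bookkeeping in the first part: one must verify carefully that the ordinary translate of the transversal-gauge potential $A$ differs from $A$ by exactly the gradient $d\varphi_\gamma$ of the linear function $\varphi_\gamma(x) = \sum_j A^0_j(\gamma) x_j$, with no leftover periodic gradient term, so that Proposition \ref{4.6} applies cleanly; this is where the decomposition $A = A^{(p)} + A^0$ established just before the Lemma (following \cite{I}) is essential. Everything else — the covariance of $\mathfrak{Op}^A$ under ordinary translations, the commutation of $L_\gamma, U_\gamma$ with multiplication by $\Gamma$-periodic functions, and the passage from form identities to operator identities via unitarity — is routine.
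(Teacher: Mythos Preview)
Your proof is correct and follows essentially the same route as the paper: split $T_\gamma = U_\gamma L_\gamma$, handle $U_\gamma$ by the gauge covariance Proposition~\ref{4.6}, handle $L_\gamma$ by a direct computation of $\Gamma^A(x-\gamma,y-\gamma)$ using the decomposition $A = A^{(p)} + A^0$ with $A^{(p)}$ $\Gamma$-periodic, and then pass from $\Op(p)$ to $H$ via Corollary~\ref{hypo} (the paper does not use the form-level argument you offer as an alternative, but it is equally valid). One caveat on the bookkeeping you yourself flag: with $(L_\gamma u)(x)=u(x-\gamma)$ one actually gets $L_\gamma\,\Op(p)\,L_\gamma^{-1}=\mathfrak{Op}^{A(\cdot-\gamma)}(p)=\mathfrak{Op}^{A-d\varphi_\gamma}(p)$, and since $U_\gamma=e^{+i\varphi_\gamma}$ Proposition~\ref{4.6} gives $U_\gamma\,\mathfrak{Op}^{A'}\,U_\gamma^{-1}=\mathfrak{Op}^{A'+d\varphi_\gamma}$; your two sign slips cancel, so the conclusion $T_\gamma\,\Op(p)\,T_\gamma^{-1}=\Op(p)$ stands.
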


\begin{proof}
By Proposition \ref{4.6}, we have $H(A;V)U_\gamma =U_\gamma H(A-d\varphi_\gamma ;V)$. So we only need to show that
 \begin{equation}\label{8.2}
 L_\gamma H(A;V)=H(A-d\varphi_\gamma;V)L_\gamma,\ \ \ \ \forall\,\gamma\in\Gamma.
 \end{equation}
 Since
 $$\Gamma^A (x-\gamma,y-\gamma)=\Gamma^A (x,y)-A^0 (\gamma)=\Gamma^{A-d\varphi_\gamma}(x,y),\ x,y\in\RR,$$
 it follows that for any
 $a\in S^m (\RR),\ \Gamma$-periodic in $x$, and for any $w\in\S (\RR)$, we have
 \begin{equation}\label{8.3}
 L_\gamma \Op (a)w=\mathfrak{Op}^{A-d\varphi_\gamma}(a)(L_\gamma w).
 \end{equation}
 Let $u\in D(H(A;V))$, so $u\in D(h(A;V))$ and $\Op (p)u+Vu\in\l$ with $p(\xi):=<\xi>-1$. From (\ref{8.3}) we have
 \begin{equation}\label{8.4}
  \left[\mathfrak{Op}^{A-d\varphi_\gamma}(p)\right](L_\gamma u)+V(L_\gamma u)=L_\gamma\left[\Op (p)u+Vu\right]\in\l.
 \end{equation}
 Let us show that $L_\gamma u\in D(h(A-d\varphi_\gamma;V))$. Obviously $L_\gamma u\in D(\qvp)$.
 From (2.2) it follows that
 $$w\in D(H_A)\ {\rm if\ and\ only\ if}\ L_\gamma w\in D(H_A)=D(H_{A-d\varphi_\gamma}).$$
 From (\ref{8.3}) we deduce that
 $L_\gamma H_A L^{-1}_\gamma=H_{A-d\varphi_\gamma},$
 so
$L_\gamma H^{1/2}_A L^{-1}_\gamma=H^{1/2}_{A-d\varphi_\gamma}.$
It follows that
$L_\gamma\left[D(h_A)\right]=D(h_{A-d\varphi_\gamma}),$
so
$L_\gamma u\in D(h_{A-d\varphi_\gamma})$
and then
$$L_\gamma u\in D\left[h(A-d\varphi_\gamma;V)\right].$$
From Corollary 4.4  and equality (\ref{8.4}) we get
$L_\gamma u\in D\left[H(A-d\varphi_\gamma;V)\right]$
as well as  (\ref{8.2}).
\end{proof}

The family $\{T_\gamma\}_{\gamma\in\Gamma}$ satisfies
$$T_\alpha T_\beta = e^{-i\varphi_\beta (\alpha)}T_{\alpha+\beta},\ \ \alpha,\,\beta\in\Gamma,$$
so it doesn't form a group. However, using \cite{A} as a model (cf. also \cite{I}), one can define
a $\Gamma$-trace for a class of operators on $B(\l)$ commuting with the magnetic translations $T_\gamma$.

\begin{definition}\label{unop}
An operator $S\in B(\l)$ belongs to $\mathcal I^\Gamma_1$ if
$T_\gamma S=ST_\gamma$, $\forall\,\gamma\in\Gamma$
and if for every
$\varphi,\psi\in L^\infty_{\rm comp} (\RR)$ one has $\varphi S\psi\in\mathcal I_1$.
 \end{definition}

One can show that for all $\varphi,\varphi',\psi,\psi'\in L^\infty_{\rm comp} (\RR)$ such that
$$\underset{\gamma\in\Gamma}
{\sum}L_\gamma (\varphi\psi)=\underset{\gamma\in\Gamma}{\sum}L_\gamma (\varphi'\psi')=1,\ \ \ \forall S\in \mathcal I_1,$$
we have the equality
${\rm tr}(\varphi S\psi)={\rm tr}(\varphi' S\psi').$
This justifies

\begin{definition}\label{fie}
  Let $S\in \mathcal I_1^\Gamma$. We call $\Gamma$-{\it trace of} $S$ the expression
  $${\rm tr}_\Gamma S:={\rm tr}(\varphi S\psi),$$
  where
$\varphi,\psi\in L^\infty_{\rm comp}(\RR)$ and $\underset{\gamma\in\Gamma}{\sum} L_\gamma (\varphi\psi)=1$.
\end{definition}

One can prove (see \cite{I})

\begin{lemma}\label{atu}
Let $S=S^*\in\mathcal I^\Gamma_1$. Then $ K_S$, the integral kernel of $S$, is a locally integrable function
on $\RR\times\RR$, its restriction to the diagonal of $\RR\times\RR$ is well-defined and locally integrable and one has
\begin{equation}\label{8.5}
{\rm tr}_\Gamma S=\int_F K_S (x,x)dx,
\end{equation}
where $F$ is a fundamental domain of $\RR$ with respect to $\Gamma$.
 \end{lemma}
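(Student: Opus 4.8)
The plan is to establish \eqref{8.5} by reducing the $\Gamma$-trace of $S=S^*\in\mathcal I^\Gamma_1$ to an ordinary trace and then identifying the latter with an integral of the kernel over a fundamental domain. First I would fix a partition-type pair: by hypothesis (iii)–(iv) on $\Gamma$ one can find $\varphi,\psi\in L^\infty_{\rm comp}(\RR)$ with $\sum_{\gamma\in\Gamma}L_\gamma(\varphi\psi)=1$; a convenient explicit choice is to take $\chi\in\c$, $\chi\geq 0$, supported in a neighborhood of the fundamental domain $F$, with $\sum_{\gamma\in\Gamma}L_\gamma\chi\equiv 1$, and then set $\varphi=\psi=\chi^{1/2}$, so that $\varphi\psi=\chi$. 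By Definition \ref{fie}, ${\rm tr}_\Gamma S={\rm tr}(\varphi S\psi)$, and by Definition \ref{unop} the operator $\varphi S\psi$ is genuinely trace-class, hence has a well-defined integral kernel $K_{\varphi S\psi}(x,y)=\varphi(x)K_S(x,y)\psi(y)$, where $K_S$ is a priori only a distributional kernel.

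The key step is to show that $K_S$ is in fact a locally integrable function whose restriction to the diagonal is well-defined and locally integrable. For this I would exploit that $S=S^*$ and that $\varphi S\psi\in\mathcal I_1$ for \emph{arbitrary} $\varphi,\psi\in L^\infty_{\rm comp}(\RR)$. Writing $S = (\sum_\gamma L_\gamma\chi)\,S\,(\sum_{\gamma'}L_{\gamma'}\chi)$ and using the $T_\gamma$-covariance to relate the $(\gamma,\gamma')$-block to the $(0,\gamma'-\gamma)$-block, one sees that $S$ is, locally, a (norm-convergent) sum of trace-class pieces; in particular $K_S$ agrees locally with an $L^1_{\rm loc}$ kernel. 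To handle the diagonal I would write each local trace-class block as $S_{\rm loc}=A B$ with $A,B\in\mathcal I_2$ (possible since $S=S^*$ permits the symmetric splitting $S_{\rm loc}=|S_{\rm loc}^{1/2}|\cdot(\text{sign})|S_{\rm loc}^{1/2}|$, or more simply factor a resolvent power as in Corollary \ref{doi}), so that $K_{S_{\rm loc}}(x,y)=\int K_A(x,z)K_B(z,y)\,dz$ with $K_A,K_B\in L^2$; this gives continuity in the $L^1_{\rm loc}$-sense of $x\mapsto K_{S_{\rm loc}}(x,x)$ and justifies ${\rm tr}\,S_{\rm loc}=\int K_{S_{\rm loc}}(x,x)\,dx$ by the standard Hilbert–Schmidt trace formula.

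Finally I would assemble the identity. With $\varphi=\psi=\chi^{1/2}$,
$$
{\rm tr}_\Gamma S={\rm tr}(\chi^{1/2}S\chi^{1/2})=\int_{\RR}\chi(x)\,K_S(x,x)\,dx,
$$
the last equality by the diagonal trace formula applied to the trace-class operator $\chi^{1/2}S\chi^{1/2}$ (note $K_{\chi^{1/2}S\chi^{1/2}}(x,x)=\chi(x)K_S(x,x)$). Using $T_\gamma$-covariance of $S$, the kernel $K_S$ satisfies $K_S(x+\gamma,y+\gamma)=e^{i(\varphi_\gamma(x)-\varphi_\gamma(y))}K_S(x,y)$ up to the magnetic phase factors coming from $U_\gamma$; on the diagonal the phases cancel, so $x\mapsto K_S(x,x)$ is $\Gamma$-periodic. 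Hence
$$
\int_{\RR}\chi(x)\,K_S(x,x)\,dx=\sum_{\gamma\in\Gamma}\int_F(L_\gamma\chi)(x)\,K_S(x,x)\,dx=\int_F K_S(x,x)\,dx,
$$
using $\sum_\gamma L_\gamma\chi\equiv 1$ and periodicity of the diagonal. This proves \eqref{8.5}, and incidentally confirms independence of the choice of $\varphi,\psi$. The main obstacle I anticipate is the regularity part — showing rigorously that $K_S$ is $L^1_{\rm loc}$ and that its diagonal restriction makes sense and computes the trace; everything else is bookkeeping with the magnetic translations. This is exactly the point where one invokes the analogous argument from \cite{I} and the Atiyah-type $\Gamma$-index formalism \cite{A}.
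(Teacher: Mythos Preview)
The paper does not actually supply its own proof of this lemma: it is stated with the remark ``One can prove (see \cite{I})'' and the argument is deferred to that reference. Your proposal is therefore more detailed than anything appearing in the present paper, and the overall strategy --- compute ${\rm tr}(\chi^{1/2}S\chi^{1/2})$ via the Hilbert--Schmidt factorisation of a trace-class operator, then use the $\Gamma$-periodicity of $K_S(x,x)$ coming from $T_\gamma S=ST_\gamma$ to fold the integral onto $F$ --- is the standard one and is correct.

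Two minor points. First, your appeal to ``hypothesis (iii)--(iv) on $\Gamma$'' is misplaced: those hypotheses concern the family $\mathcal F$ of domains, not the lattice. The existence of $\chi\in C_0^\infty(\RR)$ with $\sum_{\gamma\in\Gamma}L_\gamma\chi\equiv 1$ is elementary and needs no extra assumption. Second, your suggested factorisations of $S_{\rm loc}$ are a bit loose: $\chi^{1/2}S\chi^{1/2}$ is self-adjoint and trace-class, so the clean factorisation is the polar one, $\chi^{1/2}S\chi^{1/2}=U|\chi^{1/2}S\chi^{1/2}|^{1/2}\cdot|\chi^{1/2}S\chi^{1/2}|^{1/2}$ with both factors Hilbert--Schmidt; the aside ``or more simply factor a resolvent power as in Corollary \ref{doi}'' does not apply here since $S$ is an abstract element of $\mathcal I_1^\Gamma$ with no a priori link to $H$. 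With these adjustments your argument goes through, and your computation of the magnetic-translation covariance of the kernel (with the phases cancelling on the diagonal because $\varphi_\gamma$ is linear) is exactly right.
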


 From Corollary 5.3 and Lemma \ref{sup} it follows that for any $f\in C_0 (\R)$ one has $f(H)\in\mathcal I_1^\Gamma$.
 From Lemma \ref{atu} we know that the restriction to the diagonal of integral kernel $K_{f(H)}$ exists as a locally
 integrable function. Then for any $\Omega\in\F$ one has
 \begin{equation}\label{8.6}
 {\rm tr}(\boldsymbol{1}_\Omega f(H)\boldsymbol{1}_\Omega)=\int_\Omega K_{f(H)}(x,x)dx.
 \end{equation}

By the proof of Theorem 1.6 in \cite{I} we get
 \begin{equation}\label{8.7}
 \underset{\Omega\to\RR,\Omega\in\F}{\lim}\frac{1}{|\Omega|}\int_\Omega K_{f(H)}(x,x)dx=\frac{1}{|F|}\int_F K_{f(H)}
 (x,x)dx.
 \end{equation}
Then (1.9) follows from (\ref{8.6}), (\ref{8.7}) and (\ref{8.5}).

\subsubsection*{Acknowledgements} VI and RP
acknowledge partial support from the Contract no. 2-CEx06-11-18/2006. MM was
supported by the Fondecyt Grant No. 1085162 and by the N\'ucleo Cientifico ICM
P07-027-F "Mathematical Theory of Quantum and Classical Systems".

\end{document}